\newtheorem{theorem}{Theorem}
\newtheorem{proposition}[theorem]{Proposition}
\newtheorem{lemma}[theorem]{Lemma}
\newtheorem{definition}[theorem]{Definition}
\numberwithin{figure}{section}
\newtheorem{remark}[theorem]{Remark}
\renewcommand{\H}{\mathcal{H}}
\newcommand{\Z}{\mathcal{Z}}
\newcommand{\CP}{\mathbb{CP}}
\newcommand{\CC}{\mathbb{C}}
\newcommand{\RR}{\mathbb{R}}
\newcommand{\ZZ}{\mathbb{Z}}
\newcommand{\WW}{\mathbb{W}}
\newcommand{\VV}{\mathbb{V}}
\newcommand{\U}{{\rm{U}}}
\newcommand{\oo}{\mathcal{O}}
\numberwithin{equation}{section}
\numberwithin{theorem}{section}
\numberwithin{table}{section}
\numberwithin{table}{section}
\begin{document}
\bibliographystyle{amsalpha} 
\title[Deformation theory of scalar-flat K\"ahler ALE surfaces]{Deformation theory of scalar-flat \\ K\"ahler ALE surfaces}
\author{Jiyuan Han}
\address{Department of Mathematics, University of Wisconsin, Madison, 
WI, 53706}
\email{jiyuan@math.wisc.edu}
\author{Jeff A. Viaclovsky}
\address{Department of Mathematics, University of California, Irvine, 
CA, 92697}
\email{jviaclov@uci.edu}
\thanks{The authors were partially supported by NSF Grant DMS-1405725. This paper was completed while the authors were in residence at Mathematical Sciences Research Institute in Berkeley, California. The authors would like to thank MSRI for their support, and for providing such an excellent working environment}
%\date{November 7, 2016, revised version March 20, 2018.}
\begin{abstract} 
We prove a Kuranishi-type theorem for deformations of complex structures on ALE K\"ahler surfaces. This is used to prove that for any scalar-flat K\"ahler ALE surface, all small deformations of complex structure also admit scalar-flat K\"ahler ALE metrics. A local moduli space of scalar-flat K\"ahler ALE metrics is then constructed, which is shown to be universal up to small diffeomorphisms (that is, diffeomorphisms which are close to the identity in a suitable sense). A formula for the dimension of the local moduli space is proved in the case of a scalar-flat K\"ahler ALE surface which deforms to a minimal resolution of 
$\CC^2/\Gamma$, where $\Gamma$ is a finite subgroup of ${\rm{U}}(2)$ without complex reflections.  
 \end{abstract}
\maketitle
\vspace{-5mm}
\setcounter{tocdepth}{1}
%\tableofcontents
\vspace{-2mm}
%%%%%%%%%%%%%%%%%%%%%%%%%%%%%%%%%%%%%%%%%%%%%%%%%%%%%%%%
\section{Introduction}
\label{intro}
%%%%%%%%%%%%%%%%%%%%%%%%%%%%%%%%%%%%%%%%%%%%%%%%%%%%%
This article is concerned with the following class of metrics:
\begin{definition}
An ALE K\"ahler surface $(X, g,J)$ is a K\"ahler manifold of complex dimension $2$ with the following property. There exists a compact subset $K\subset X$ and a diffeomorphism 
$\psi: X \setminus K\rightarrow (\RR^4\setminus \overline{B})/\Gamma$, such that for each multi-index $\mathcal{I}$ of order~$|\mathcal{I}|$
\begin{align}
\partial^{\mathcal{I}}(\psi_*(g)-g_{Euc}) = O(r^{-\mu-|\mathcal{I}|}),
\end{align}
as $r \rightarrow \infty$, where $\Gamma$ is a finite subgroup of ${\rm{U}}(2)$ containing no complex reflections, $B$ denotes a ball centered at the origin, and $g_{Euc}$ denotes the Euclidean metric. The real number $\mu$ is called the order of $g$.
\end{definition}
\begin{remark}{\em
In this paper, henceforth $\Gamma$ will always be a finite subgroup of ${\rm{U}}(2)$ containing no complex reflections.
}
\end{remark}
We are interested in the class of scalar-flat K\"ahler ALE metrics. These 
are interesting since they are {\it{extremal}} in the sense of Calabi \cite{Calabi85},
and they arise as ``bubbles'' in gluing constructions for extremal K\"ahler metrics
\cite{ALM_kummer, ALM_resolution, Arezzo06, APSinger, BiquardRollin, RollinSinger, RS09}. In the case of scalar-flat K\"ahler ALE metrics, it is known that there exists an ALE coordinate system for which the order of such a metric is at least $2$ \cite{LeBrunMaskit}. 

We note that for an ALE K\"ahler metric of order $\mu$, there exist ALE coordinates for which 
\begin{align}
\partial^{\mathcal{I}} (J - J_{Euc})= O(r^{-\mu-|\mathcal{I}|}),
\end{align}
for any multi-index $\mathcal{I}$ as $r \rightarrow \infty$, where $J_{Euc}$ is the standard complex structure on Euclidean space \cite{HL15}. 

There are many known examples of scalar-flat K\"ahler ALE metrics. 
In the case that $\Gamma \subset {\rm{SU}}(2)$, Kronheimer constructed and classified the hyperk\"ahler ALE metrics \cite{Kr89, KrII89}. Calderbank-Singer constructed a family of scalar-flat K\"ahler ALE metrics on the minimal resolution of any cyclic quotient singularity \cite{CalderbankSinger}. For the remaining subgroups of ${\rm{U}}(2)$, the existence of scalar-flat K\"ahler metrics on the minimal resolution of $\CC^2/\Gamma$, was shown by Lock-Viaclovsky \cite{LV14}.

\iffalse
\begin{itemize}

\item ${\rm{SU}}(2)$ case: when $\Gamma\subset {\rm{SU}}(2)$, Kronheimer has constructed families of hyperk\"ahler ALE metrics \cite{Kr89} on manifolds diffeomorphic to the 
minimal resolution of $\CC^2 / \Gamma$. In \cite{KrII89}, Kronheimer also proved a Torelli-type theorem classifying hyperk\"ahler ALE surfaces. In the $A_k$ case, these metrics were previously discovered by Eguchi-Hanson for $k=1$ \cite{EguchiHanson}, and 
by Gibbons-Hawking for all $k \geq 1$ \cite{GibbonsHawking}. 

\vspace{2mm}
\item Cyclic case: For the $\frac{1}{p} (1,q)$-action, Calderbank-Singer constructed a family of scalar-flat K\"ahler ALE metrics on the minimal resolution of any cyclic quotient singularity \cite{CalderbankSinger}. These metrics are toric and come in families of dimension $k-1$, where $k$ is the length of the corresponding Hirzebruch-Jung algorithm.  For $q=1$ and $q=p-1$, these metrics are the LeBrun negative mass metrics
and the toric multi-Eguchi-Hanson metrics, respectively \cite{LeBrunnegative, GibbonsHawking}.

\vspace{2mm}
\item Non-cyclic non-${\rm{SU}}(2)$ case: The existence of scalar-flat K\"ahler metrics on the minimal resolution of $\CC^2/\Gamma$, was shown by Lock-Viaclovsky \cite{LV14}.

\end{itemize}
\fi

The question we address in this paper is whether the scalar-flat K\"ahler property is preserved under small deformations of complex structure. 
In the cases where $\Gamma \subset {\rm{SU}}(2)$, 
the hyperk\"ahler quotient construction
produces hyperk\"ahler metrics for the minimal
resolution complex structure as well as for all small deformations of the minimal resolution complex structure.
In the case of the LeBrun negative mass metrics, using arguments from twistor theory, 
Honda has shown that all small 
deformations of the complex structure on $\mathcal{O}(-n)$ also admit 
scalar-flat K\"ahler metrics \cite{HondaOn, Honda_2014}. 
For the case of general $\Gamma$, in \cite{LV14}, employing Honda's result, it was
shown that {\textit{some}} of small deformations of complex structure 
admit scalar-flat K\"ahler ALE metrics. 

Our main result in this paper shows that for {\textit{any}} scalar-flat K\"ahler ALE surface, 
{\textit{all}} small deformations of complex structure admit scalar-flat 
K\"ahler ALE metrics. Our proof is analytic in nature, and in particular, gives a new analytic proof of Honda's result on $\mathcal{O}(-n)$ mentioned above.

To state our result precisely, we 
next recall some basic facts regarding deformations of complex structures. 
For a complex manifold $(X,J)$, let $\Lambda^{p,q}$ denote the bundle of $(p,q)$-forms, and let $\Theta$ denote the holomorphic tangent bundle. The deformation complex 
corresponds to a real complex as shown in the commutative diagram 
\begin{equation}
\begin{tikzcd}[column sep=2.1cm]
\label{cd1}
\Gamma(\Theta) \arrow[r, "\overline{\partial}"] \arrow[d, "Re"] & \Gamma( \Lambda^{0,1} \otimes \Theta) \arrow[r, "{\overline{\partial}}"] \arrow[d, "Re"] & \Gamma(\Lambda^{0,2} \otimes \Theta) \arrow[d, "Re"]\\
\Gamma(TX) \arrow[r,"Z \mapsto -\frac{1}{2} J \circ \mathcal{L}_{Z}J"]  & \Gamma(End_{a}(TX)) 
\arrow[r, "I \mapsto \frac{1}{4} J \circ N_J'(I)"] &  \Gamma\big( \{\Lambda^{0,2} \otimes \Theta \oplus \Lambda^{2,0} \otimes \overline{\Theta} \}_{\RR} \big),
\end{tikzcd}
\end{equation}
where $\mathfrak{L}_Z J$ is the Lie derivative of $J$,
\begin{align}
End_{a}(TX) = \{I\in End(TX): IJ = -JI\},
\end{align}
and $N_J'$ is the linearization of Nijenhuis tensor 
\begin{align}
N(X,Y) = 2\{[JX,JY]-[X,Y]-J[X,JY]-J[JX,Y]\}
\end{align}
at $J$. 
Each isomorphism $Re$ is simply taking the real part of a section. 
If $g$ is a Hermitian metric compatible with $J$, then let $\square$ denote the $\bar\partial$-Laplacian 
\begin{align}
\square  \equiv \bar\partial^*\bar\partial+\bar\partial\bar\partial^*,
\end{align}
where $\bar\partial^*$ denotes the formal $L^2$-adjoint. 
Each complex bundle in the diagram \eqref{cd1} admits a $\square$-Laplacian,
and these correspond to real Laplacians on each real bundle 
in \eqref{cd1}. We will use the same $\square$-notation for 
these real Laplacians. 

To state our most general result, we need the following definition.
This is necessary because there is a gauge freedom of Euclidean 
motions in the definition of ALE coordinates.   
\begin{definition}
{\em
Let $(X,g,J)$ be a K\"ahler ALE surface. For any bundle $E$ in 
the diagram \eqref{cd1}, and $\tau \in \RR$, define
\begin{align}
\H_{\tau}(X,E) &= \{\theta\in \Gamma(X,E): \square\theta=0, \theta=O(r^{\tau}) 
 \mbox{ as } r \rightarrow \infty \}.
\end{align}
Define 
\begin{align}
\WW = \{ Z \in \mathcal{H}_1(X, TX) \ | \ \mathfrak{L}_Z g  = O (r^{-1}), \ \mathfrak{L}_Z J = O(r^{-3}),\mbox{ as } r \rightarrow \infty \}.
\end{align}
Finally, define the real subspace
\begin{align}
\H_{ess} (X,End_a(TX)) \subset  \H_{-3}(X,End_a(X)) 
\end{align}
({\it{ess}} is short for essential) to be the $L^2$-orthogonal complement in $\H_{-3}(X,End_a(X))$
of the subspace 
\begin{align}
\VV = \{ \theta \in \H_{-3}(X,End_a(TX)) \  | \ 
\theta =  J\circ \mathfrak{L}_Z J, \  Z \in \WW \}.
\end{align}
}
\end{definition}
Our main result is the following.
\begin{theorem}
\label{tmain}
Let $(X,g,J)$ be a scalar-flat K\"ahler ALE surface.
Let $-2 < \delta < -1$, $0 < \alpha < 1$,  and $k$ an integer with $k \geq 4$
 be fixed constants. 
Let $B^1_{\epsilon_1}$ denote an $\epsilon_1$-ball in $\H_{ess} ( X,End_a(TX)) $, 
$B^2_{\epsilon_2}$ denote an $\epsilon_2$-ball in $\H_{-3}(X,\Lambda^{1,1})$ (both using the $L^2$-norm). 
Then there exists $\epsilon_1 >0$ and $\epsilon_2 >0$ and a family $\mathfrak{F}$ of scalar-flat K\"ahler metrics near $g$, parametrized by 
$B^1_{\epsilon_1} \times B^2_{\epsilon_2}$, that is, there is a differentiable mapping 
\begin{align}
F : B^1_{\epsilon_1} \times B^2_{\epsilon_2} \rightarrow {Met}(X), 
\end{align}
into the space of smooth Riemannian metrics on $X$, 
with $\mathfrak{F} = F(B^1_{\epsilon_1} \times B^2_{\epsilon_2})$ satisfying the following ``versal'' property:
there exists a constant $\epsilon_3 >0$ 
such that for any scalar-flat K\"ahler metric $\tilde{g}\in B_{\epsilon_3}(g)$,
there exists a diffeomorphism $\Phi: X \rightarrow X$, $\Phi \in C^{k+1,\alpha}_{loc}$,
such that $\Phi^* \tilde{g} \in \mathfrak{F}$, 
where 
\begin{align}
\label{wnd}
B_{\epsilon_3}(g) =  \{g' \in C^{k,\alpha}_{loc}(S^2(T^*X))  \ | \ \Vert g - g' \Vert_{C^{k,\alpha}_{\delta}(S^2(T^*X))} < \epsilon_3 \}.
\end{align} 
\end{theorem}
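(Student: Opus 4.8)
The plan is to construct the map $F$ separately on the two factors of $B = B^1 \times B^2$ and then verify the versal property by a gauge-fixing argument at infinity. On the first factor $B^1 \subset \H_{ess}(X, End_a(TX))$, I would combine the Kuranishi-type theorem and the scalar-flat preservation result established earlier in the paper. For each $\phi \in B^1$, the Kuranishi map integrates the infinitesimal deformation $\phi$ to an integrable complex structure $J_\phi$ near $J$, depending differentiably on $\phi$; by the preservation theorem, $J_\phi$ carries a scalar-flat K\"ahler ALE metric $g_\phi$. The reason the parametrization uses $\H_{ess}$ rather than all of $\H_{-3}(X, End_a(TX))$ is precisely to discard the infinitesimal deformations in $\VV$, which are of the form $\mathfrak{L}_Y J$ for $Y \in \WW$ and hence generated by diffeomorphisms; these are trivial and must not be counted.

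On the second factor $B^2 \subset \H_{-3}(X, \Lambda^{1,1})$, I would vary the K\"ahler class. Since the $L^2$ harmonic $(1,1)$-forms of decay rate $-3$ represent the $b_2$-dimensional space $H^2(X,\RR) \cong H^{1,1}(X)$, for $\epsilon_2$ small the form $\omega_\phi + \psi$ with $\psi \in B^2$ is again positive and defines a nearby K\"ahler class. I would then solve for the scalar-flat K\"ahler representative in that class by the implicit function theorem: the linearization of the scalar curvature operator is the Lichnerowicz operator, which on the weighted spaces $C^{k,\alpha}_\delta$ with $-2 < \delta < -1$ (so that $\delta$ avoids the indicial roots and the asymptotic gauge rate $-1$) is Fredholm and surjective onto the complement of its finite-dimensional obstruction space. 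Setting $F(\phi, \psi)$ to be the resulting metric, differentiability in $(\phi, \psi)$ follows from the smooth dependence of both the Kuranishi map and the implicit-function-theorem solution.

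To prove versality, let $\tilde g \in B_{\epsilon_3}(g)$ be scalar-flat K\"ahler with a compatible complex structure $\tilde J$. Since $\tilde g - g = O(r^\delta)$ decays faster than $r^{-1}$, the asymptotic Euclidean-motion gauge is already fixed and $\tilde J$ can be taken $C^{k,\alpha}_{loc}$-close to $J$. I would first apply the completeness half of the Kuranishi theorem to produce a diffeomorphism $\Phi_1$, close to the identity, with $\Phi_1^* \tilde J = J_\phi$ for a unique $\phi \in B^1$; then $\Phi_1^* \tilde g$ is scalar-flat K\"ahler for $J_\phi$. Reading off its K\"ahler class identifies a parameter $\psi \in B^2$, and the uniqueness in the implicit-function-theorem step (after at most a further diffeomorphism close to the identity) identifies the total pullback $\Phi^* \tilde g$ with $F(\phi, \psi) \in \mathfrak{F}$. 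Throughout I must track that $\Phi$ remains in $C^{k+1,\alpha}_{loc}$ and close to the identity, which is where the weighted elliptic estimates enter.

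The main obstacle I anticipate is the versality step, specifically the gauge-fixing at infinity and the bookkeeping of decay rates. The asymptotic diffeomorphisms $Y \in \WW$ move the metric at rate $r^{-1}$ but the complex structure only at rate $r^{-3}$, so a deformation that is trivial as a pullback can nonetheless appear nontrivially in $\H_{-3}(X, End_a(TX))$; this mismatch is exactly what the subtraction of $\VV$ in the definition of $\H_{ess}$, combined with the weight $\delta \in (-2,-1)$, is designed to resolve. Verifying that the finite-dimensional obstruction and gauge spaces separate cleanly from the analytically controlled complement, and that the relevant operators remain surjective onto the correct complements across the whole family, is the technical heart of the argument.
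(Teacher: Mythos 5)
Your overall architecture is the same as the paper's (a Kuranishi family over $\H_{ess}$, variation of the K\"ahler class over $\H_{-3}(X,\Lambda^{1,1})$, an implicit-function-theorem step for the scalar curvature equation, and gauge-fixing at infinity for versality), but two of your steps contain genuine gaps and would fail as written.

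First, the weight in your scalar-curvature step is wrong. You propose to solve for the potential $f$ in $C^{k,\alpha}_{\delta}$ with $\delta\in(-2,-1)$, asserting that the linearization is ``Fredholm and surjective onto the complement of its finite-dimensional obstruction space.'' But to solve the equation for \emph{every} parameter $(\phi,\psi)$ you need actual surjectivity, and at a decaying weight the operator $L=-(\bar\partial\bar\partial^{\#})^*(\bar\partial\bar\partial^{\#})$ is \emph{not} surjective: by the Lockhart--McOwen relative index theorem the index drops by $2$ across the indicial root $0$, and since $\ker L$ is trivial on decaying spaces (integration by parts plus Proposition \ref{p2.5}), $L$ acquires a cokernel of dimension at least one on $C^{k,\alpha}_{\delta}$, dual to the constants. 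You give no argument that the nonlinear error terms are orthogonal to this cokernel, and in general they are not. The paper's proof (the surjectivity lemma and Proposition \ref{lker} in Section \ref{DSFK}) avoids this by placing the potential in $C^{k,\alpha}_{\epsilon}$ with a \emph{small positive} weight $\epsilon$: there $\ker L$ consists exactly of the constants and $L$ is surjective, so the equation is solvable for every parameter; the resulting potential typically tends to a nonzero constant at infinity, which is precisely what your decaying space excludes.

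Second, your versality step begins with the claim that since $\tilde{g}-g=O(r^{\delta})$, ``the asymptotic Euclidean-motion gauge is already fixed and $\tilde{J}$ can be taken $C^{k,\alpha}_{loc}$-close to $J$.'' This is false: closeness of the metrics does not constrain the compatible complex structure at infinity. For the Eguchi--Hanson metric one may take $\tilde{g}=g$ exactly while $\tilde{J}$ is a hyperk\"ahler rotation of $J$ whose limit at infinity is a \emph{different} constant complex structure. The missing step is exactly Lemma \ref{l6.1}: using $\nabla \tilde{J}=0$ and integration along geodesic rays, one constructs a diffeomorphism $\Phi_0$, equal to a rotation outside a compact set (hence not close to the identity), with $\Vert \Phi_0^*\tilde{J}-J\Vert_{C^{k,\alpha}_{\delta}}\leq C\epsilon_3$. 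Moreover, to land in the family parametrized by $\H_{ess}$ rather than by all of $\H_{-3}$, one must further gauge by the time-one flow $\Phi_Z$ of a non-decaying vector field $Z\in\WW$ as in Theorem \ref{t5.2}; the completeness half of the Kuranishi theorem that you invoke (Lemma \ref{dfree}) only gauges to the divergence-free slice, i.e.\ into the full $\H_{-3}$-family. The total diffeomorphism $\Phi_0\circ\Phi_{Y_1}\circ\Phi_Z\circ\Phi_{Y_2}$ is not a small diffeomorphism, so the ``diffeomorphism close to the identity'' framework in your proposal cannot produce it. You correctly identify this rate mismatch ($\WW$ moves $g$ at rate $r^{-1}$ but $J$ at rate $r^{-3}$) as the heart of the matter, but your proof does not actually carry out the required gauging.
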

\begin{remark}
{\em
The norm in \eqref{wnd} is a certain weighted H\"older norm, 
see Section \ref{Notationsec} for the precise definition. For a more precise 
description of the diffeomorphism $\Phi$, see Theorem \ref{gthm} below. 
}
\end{remark}
In order to state our next result, we must recall some facts about 
vector fields and diffeomorphisms. 
If $(X,g)$ is an ALE metric, and $Y$ is a vector field on $X$, 
the Riemannian exponential mapping $\exp_p : T_pX 
\rightarrow X$ induces a mapping 
\begin{align}
\Phi_Y : X \rightarrow X
\end{align}
by 
\begin{align}
\label{phidef}
\Phi_Y (p) = \exp_p (Y).
\end{align}
If $Y \in C^{k,\alpha}_{s}(TX)$ has sufficiently small norm, 
($s < 0$ and $k$ will be determined in specific cases)
then $\Phi_Y$ is a diffeomorphism. We will use the correspondence 
$Y \mapsto \Phi_Y$ to parametrize a neighborhood of the identity, 
analogous to  \cite{Biquard06}.
\begin{definition}
{\em 
We say that $\Phi: X \rightarrow X$  
is a {\it{small diffeomorphism}} if $\Phi$ is of the form 
$\Phi = \Phi_Y$ for some vector field $Y$ satisfying
\begin{align}
\Vert Y \Vert_{C^{k+1,\alpha}_{\delta+1}} < \epsilon_4
\end{align}
for some $\epsilon_4 >0$ sufficiently small which depends on $\epsilon_3$.
}
\end{definition}
The family $\mathfrak{F}$ is not necessarily ``universal'',
because some elements in $\mathfrak{F}$ might be isometric.
However, the following theorem shows that after taking a quotient 
by an action of the holomorphic isometries of the central fiber $(X,g,J)$, the family $\mathfrak{F}$ is in fact universal (up to small diffeomorphisms).
\begin{theorem}
\label{t2}
Let $(X,g,J)$ be as in Theorem \ref{tmain},
and let $\mathfrak{G}$ denote the group of holomorphic isometries of $(X,g,J)$. 
Then there is an action of $\mathfrak{G}$ on $\mathfrak{F}$ with the following properties.
\begin{itemize}

\item Two metrics in $\mathfrak{F}$ are isometric 
if they are in the same orbit of $\mathfrak{G}$. 

\item If two metrics in $\mathfrak{F}$ are isometric by a small
diffeomorphism then they must be the same.  
\end{itemize}
\end{theorem}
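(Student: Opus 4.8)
The plan is to first construct the action of $\mathfrak{G}$ on $\mathfrak{F}$ and verify the first property by exploiting the naturality of the entire construction of Theorem~\ref{tmain}. Since every $\gamma \in \mathfrak{G}$ satisfies $\gamma^* g = g$ and $\gamma^* J = J$, pullback by $\gamma$ commutes with the $\square$-Laplacians on all bundles in \eqref{defcomplex} and \eqref{rdefcomplex} and preserves the ALE decay conditions; hence $\gamma$ acts by $L^2$-orthogonal linear transformations on the finite-dimensional spaces $\H_{ess}(X,End_a(TX))$ and $\H_{-3}(X,\Lambda^{1,1})$, preserving the balls $B^1$ and $B^2$ and therefore giving a linear action on $B = B^1 \times B^2$. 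I would then set $\gamma \cdot g_b := g_{\gamma \cdot b}$. The content is the equivariance $F(\gamma \cdot b) = \gamma^* F(b)$: because the scalar-flat K\"ahler equation solved in the construction, together with its gauge-fixing relative to the background $(g,J)$, are natural under pullback by $\gamma$, and because the solution is the unique one in a small slice, $\gamma^* F(b)$ must coincide with the metric attached to the parameter $\gamma \cdot b$. Granting this, the first statement is immediate: if $b' = \gamma \cdot b$ then $g_{b'} = \gamma^* g_b$, so $g_b$ and $g_{b'}$ are isometric via $\gamma$.

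The second statement is the rigidity assertion, and here the crucial point is a decay obstruction. Suppose $g_{b_0} = F(b_0)$ and $g_{b_1} = F(b_1)$ are isometric via a small diffeomorphism $\Phi = \Phi_Y$, so $\Phi^* g_{b_1} = g_{b_0}$ with $\Vert Y \Vert_{C^{k+1,\alpha}_{\delta+1}} < \epsilon_4$; I want $b_0 = b_1$. Writing $g_{b_i} = g + h_{b_i}$ and expanding $\Phi^* g_{b_1} = g_{b_1} + \mathfrak{L}_Y g_{b_1} + Q(Y)$ with $Q$ of higher order in $Y$, the isometry relation becomes
\begin{align}
h_{b_0} - h_{b_1} = \mathfrak{L}_Y g + \mathfrak{L}_Y h_{b_1} + Q(Y).
\end{align}
Both $h_{b_0}$ and $h_{b_1}$ satisfy the same gauge-fixing condition relative to the fixed background $g$ built into the family $\mathfrak{F}$, so the left-hand side lies, to leading order, in the slice, which is $L^2$-orthogonal to the image of the Lie-derivative map $Y \mapsto \mathfrak{L}_Y g$. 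Applying the Bianchi-type gauge operator $\mathcal{B}_g$ to both sides therefore annihilates the left-hand side to leading order and produces an elliptic equation of the schematic form $\mathcal{B}_g \mathfrak{L}_Y g = \mathcal{R}(Y,h_{b_1})$, whose right-hand side is quadratically small in $(Y,h)$. Since $Y \mapsto \mathcal{B}_g \mathfrak{L}_Y g$ is elliptic, a smallness-and-bootstrap argument in the weighted H\"older spaces forces $Y$ into its kernel, i.e. $Y$ is an infinitesimal isometry of the (ALE) metric.

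The decisive step is then the decay obstruction. A nontrivial continuous isometry of an ALE space is generated by a Killing field asymptotic to a rotational Killing field of $(\RR^4\setminus\overline{B})/\Gamma$, and thus grows like $O(r)$ at infinity; in particular no nonzero Killing field can satisfy $Y = O(r^{\delta+1})$ with $\delta+1<0$. Since the small-diffeomorphism hypothesis places $Y$ precisely in this decaying class, standard asymptotic analysis for Killing fields on ALE manifolds forces $Y \equiv 0$. Hence $\Phi$ is the identity and $g_{b_0} = g_{b_1}$, whence $b_0 = b_1$, proving the second statement. There is no conflict with the first: the isometries realizing the $\mathfrak{G}$-orbit relation are the full, generally large, rotations $\gamma$, whose generating vector fields are $O(r)$ and hence are \emph{never} small diffeomorphisms in the sense defined above.

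I expect the main obstacle to be the rigorous justification of the equivariance $F(\gamma \cdot b) = \gamma^* F(b)$ and of the claim that the leading-order part of $h_{b_0} - h_{b_1}$ is genuinely gauge-orthogonal; both require tracking the precise gauge-fixing used in the proof of Theorem~\ref{tmain} and controlling the nonlinear remainders $Q(Y)$ and $\mathcal{R}(Y,h)$ in the weighted norms so that the elliptic estimate closes. By contrast, the reduction to a Killing field and its vanishing by the ALE decay rate is a clean and essentially standard consequence once the gauge bookkeeping is in place.
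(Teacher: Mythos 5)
Your treatment of the first bullet is essentially the paper's own: the equivariance $F(\iota^* t, \iota^* \rho) = \iota^* F(t,\rho)$ is proved inside the construction (Theorem \ref{t4.3}, via the equivariant choices of $\psi_t$ and $\omega_t$ and uniqueness of the scalar-flat potential), and the first statement follows immediately. That half is fine.

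The second bullet contains a genuine gap. Your argument hinges on the claim that the perturbations $h_b = F(b) - g$ satisfy a metric-level slice condition, i.e.\ that $h_{b_0} - h_{b_1}$ is, to leading order, $L^2$-orthogonal to the image of $Y \mapsto \mathfrak{L}_Y g$, so that a Bianchi-type operator annihilates the left-hand side of your identity. No such condition is built into $\mathfrak{F}$, and it is in fact false: the gauge-fixing in the construction acts on the \emph{complex structures} ($\bar\partial^* \phi(t) = 0$, plus $L^2$-orthogonality to $\VV$), not on the metrics, and members of $\mathfrak{F}$ differ from $g$ by, among other things, K\"ahler-potential terms $\sqrt{-1}\,\partial_t\bar\partial_t f(t,\rho)$ produced by the implicit function theorem. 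Hessian-type perturbations overlap substantially with $\mathrm{Im}(Y \mapsto \mathfrak{L}_Y g)$ (indeed $\mathfrak{L}_{\nabla f}\, g = 2\nabla^2 f$), so the orthogonality you need does not hold even at linear order. Relatedly, the deduction ``$\mathcal{B}_g \mathfrak{L}_Y g = \mathcal{R}(Y,h)$ with quadratically small right-hand side, hence $Y$ lies in the kernel'' is not valid: an approximate equation yields at best smallness of $Y$, and upgrading that to $Y \equiv 0$ requires exactly the orthogonality that fails (besides, $L^2$ pairings of tensors decaying like $r^{\delta}$ with $\delta \in (-2,-1)$ are borderline divergent, so even the formal pairing argument needs care). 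Finally, your closing inference ``$g_{b_0} = g_{b_1}$, whence $b_0 = b_1$'' invokes injectivity of $F$, which the paper only establishes in the non-hyperk\"ahler case (Lemma \ref{non-hyperKahler}); this is harmless for the theorem as stated, but it signals that the hyperk\"ahler case needs separate attention.

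For contrast, the paper's proof stays entirely at the level of complex structures and thereby avoids any metric slice: if $\Phi_Y^* g_2 = g_1$ with $g_1, g_2 \in \mathfrak{F}$, then $g_1$ is K\"ahler with respect to both $J_1$ and $\Phi_Y^* J_2$; the pointwise algebraic Lemma \ref{non-hyperKahler} (two distinct compatible complex structures generate a hyperk\"ahler triple) forces $J_1 = \Phi_Y^* J_2$ when $g$ is not hyperk\"ahler. Since both $J_1$ and $J_2$ are divergence-free by construction, the \emph{uniqueness} of the small gauging diffeomorphism in Lemma \ref{dfree} gives $\Phi_Y = \mathrm{Id}$, hence $g_1 = g_2$; the hyperk\"ahler case is then handled by first rotating the hyperk\"ahler sphere. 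Your decay obstruction for Killing fields is correct and standard, but it is never reached, because the reduction of $Y$ to a Killing field is exactly the step that fails. To salvage your approach you would need to prove a slice theorem for $\mathfrak{F}$ with respect to the diffeomorphism action on metrics, which is precisely what the paper's complex-structure gauge is designed to replace.
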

Since each orbit represents a unique isometry class
of metric (up to small diffeomorphism), we will refer to the quotient $\mathfrak{M} = \mathfrak{F}/\mathfrak{G}$ as the ``local moduli space
of scalar-flat K\"ahler ALE metrics near $g$.''
The local moduli space $\mathfrak{M}$ is not a manifold in general, but
its dimension is in fact well-defined, and we define 
\begin{align}
\label{mdefi}
m = \dim ( \mathfrak{M}).
\end{align}

\begin{remark}{\em
We should point out that our local moduli space of metrics contains small rescalings, i.e, $g\mapsto \frac{1}{c^2}g(c\cdot,c\cdot)$ for $c$ close to $1$. If one considers scaled 
metrics as equivalent (which we do not), then the dimension would decrease by $1$.
}
\end{remark}

\subsection{Deformations of the minimal resolution}

As mentioned above, there are families of examples of scalar-flat K\"ahler ALE metrics on minimal resolutions of isolated quotient singularities. For convenience, we next recall
the definition of a minimal resolution.
\begin{definition}
\label{minresdef}
{\em Let $\Gamma \subset \U(2)$ be as above.  Then, a smooth complex surface ${X}$ is called a {\textit{minimal resolution}} 
of $\CC^2/\Gamma$ if there is a mapping $\pi : {X}\rightarrow \CC^2/\Gamma$ such 
that 
the restriction $\pi:{X} \setminus \pi^{-1}(0)\rightarrow \CC^2/\Gamma\setminus\{0\}$ is a biholomorphism, and
$\pi^{-1}(0)$ is a divisor in ${X}$ containing no $-1$ curves.
The divisor $\pi^{-1}(0)$ is called the {\it{exceptional divisor}}. }
\end{definition}

In the cyclic case, the exceptional divisor is a string of rational curves with normal crossing singularities. In the case that $\Gamma$ is non-cyclic, it was 
shown by Brieskorn, see \cite{Brieskorn}, that the exceptional divisor is a tree of 
rational curves with normal crossing singularities. 
There are three Hirzebruch-Jung strings attached to a single curve,
called the {\textit{central rational curve}}. 
The self-intersection number of this 
curve will be denoted $-b_{\Gamma}$, and the total number 
of rational curves will be denoted by~$k_{\Gamma}$.

A specialized version of Theorem \ref{tmain} is the following.
\begin{theorem}
\label{generalthm}
Let $(X,g,J)$ be any scalar-flat K\"ahler ALE metric on the minimal resolution of $\CC^2/\Gamma$, where $\Gamma \subset {\rm{U}}(2)$ is as above. Define 
\begin{align}
j_{\Gamma} =  2 \sum_{i = 1}^{k_{\Gamma}} (e_i -1),
\end{align}
where $-e_i$ is the self-intersection number of the $i$th rational curve, and $k_{\Gamma}$ is the number of rational curves in the exceptional divisor, and let 
\begin{align}
d_{\Gamma}=  j_{\Gamma} + k_{\Gamma}.
\end{align}
Then there is a family, $\mathfrak{F}$, parametrized by a ball in $\RR^{d_{\Gamma}}$, 
of scalar-flat K\"ahler metrics near $g$ with the ``versal'' property stated in Theorem \ref{tmain}.
\end{theorem}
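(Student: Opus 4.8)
The plan is to derive Theorem \ref{generalthm} entirely from Theorem \ref{tmain} and Theorem \ref{t2}, so that the only genuine work is to evaluate the two integers appearing in Theorem \ref{tmain}, namely $j = \dim_\RR \H_{ess}(X, End_a(TX))$ and $b_2$, in terms of the resolution data $\{e_i\}$ and $k_\Gamma$. Once we show $b_2 = k_\Gamma$ and $j = j_\Gamma = 2\sum_{i=1}^{k_\Gamma}(e_i - 1)$, Theorem \ref{tmain} produces the versal family $\mathfrak{F}$, parametrized by a ball of dimension $d = j + b_2 = j_\Gamma + k_\Gamma = d_\Gamma$, with exactly the stated ``versal'' property. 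The moduli dimension $m_\Gamma$ is then read off from Theorem \ref{t2} by subtracting the dimension of a generic orbit of the holomorphic isometry group $\mathfrak{G}$.

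First I would handle the topology. For a minimal resolution $\pi : X \to \CC^2/\Gamma$ the exceptional divisor is a tree of rational curves $C_1, \dots, C_{k_\Gamma}$, and $X$ deformation retracts onto this divisor, so $H_2(X, \ZZ)$ is freely generated by the classes $[C_i]$. The intersection form is the symmetric matrix with diagonal entries $-e_i$ and off-diagonal entries the incidence data of the tree; it is negative definite, hence nondegenerate of rank $k_\Gamma$. Therefore $b_2 = k_\Gamma$, and the second, Kähler-class factor $B^2 \subset \H_{-3}(X, \Lambda^{1,1})$ of the family $\mathfrak{F}$ has dimension $k_\Gamma$.

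The heart of the argument is the computation of $j$. Using the isomorphism between the real complex \eqref{rdefcomplex} and the Dolbeault complex \eqref{defcomplex}, I would identify $\H_{-3}(X, End_a(TX))$ with the space of decaying $\square$-harmonic $\Lambda^{0,1}\otimes\Theta$-valued forms, which by ALE Hodge theory represents a weighted Dolbeault group $H^1(X, \Theta)$; passing to $\H_{ess}$ excises the gauge directions $\VV$ generated by the asymptotic vector fields $\WW$. To evaluate this group I would localize near the exceptional divisor: along each $C_i \cong \CP^1$ the normal bundle sequence $0 \to TC_i \to TX|_{C_i} \to N_{C_i} \to 0$ reads $0 \to \oo(2) \to TX|_{C_i} \to \oo(-e_i) \to 0$, and since $H^1(\CP^1, \oo(2)) = 0$ while $H^1(\CP^1, \oo(-e_i)) \cong H^0(\CP^1, \oo(e_i-2))^\ast \cong \CC^{\,e_i-1}$ by Serre duality, each curve contributes $e_i - 1$ complex moduli. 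A Mayer--Vietoris argument over the tree of curves, combined with vanishing of the $H^0$ and $H^2$ contributions in the chosen weighted range (so that the rigid end contributes nothing and the local pieces assemble without cancellation), then yields $\dim_\CC H^1(X, \Theta) = \sum_{i=1}^{k_\Gamma}(e_i - 1)$, and hence $j = 2\sum_{i=1}^{k_\Gamma}(e_i - 1) = j_\Gamma$.

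Finally, for the moduli dimension I would invoke Theorem \ref{t2}: isometric metrics in $\mathfrak{F}$ are exactly those in a common $\mathfrak{G}$-orbit, and only the trivial small diffeomorphism fixes a given fiber, so $m_\Gamma = d_\Gamma - \dim(\text{generic orbit})$. The group $\mathfrak{G}$ of holomorphic isometries of the central fiber depends on $\Gamma$ (largest in the toric and cyclic cases, smallest for generic non-cyclic $\Gamma$), and one computes its generic orbit dimension to fill in Table \ref{dimtable}. I expect the main obstacle to be the middle step: making rigorous the identification of the analytically defined space $\H_{ess}$ --- with its prescribed decay rate $O(r^{-3})$ and the subtraction of the gauge space $\VV$ --- with the algebraically computable cohomology $H^1(X, \Theta)$, and checking that the chosen weight detects precisely the compactly supported deformations while excluding spurious solutions that grow or decay at the wrong rate. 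Getting this bookkeeping of weights and the excision of $\WW$ correct is where the delicate analysis lies.
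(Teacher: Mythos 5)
Your overall reduction is the same as the paper's: take Theorems \ref{tmain} and \ref{t2}, show $b_2 = k_\Gamma$ (fine, and same as the paper), and identify $j = \dim_{\RR}\H_{ess}$ with $2\sum_i(e_i-1)$. But the two steps that carry all the content of this identification have genuine gaps. First, your computation of $\dim_{\CC}H^1(X,\Theta)$ does not work as sketched. Restricting $TX$ to each curve and using $0 \to \oo(2) \to TX|_{C_i} \to \oo(-e_i) \to 0$ indeed produces $e_i - 1$ from each $C_i$, but on the \emph{open} surface $X$ the group $H^1(X,\Theta)$ is not a priori determined by restrictions of $\Theta$ to the reduced curves: classes can be supported on (all) infinitesimal neighborhoods of $E$, and ``Mayer--Vietoris over the tree'' has no mechanism to rule this out or to show the local pieces patch without loss. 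This assembly step is exactly where the paper works: it uses the global exact sequence $0 \to Der_E(X) \to \Theta_X \to \oo_E(E) \to 0$, Siu's vanishing theorem to kill $H^2(X,Der_E(X))$, and --- crucially --- the vanishing $H^1(X,Der_E(X)) = 0$, whose proof occupies the Appendix: for cyclic $\Gamma$ an explicit \v{C}ech/Laurent computation in Hirzebruch--Jung coordinates, and for non-cyclic $\Gamma$ a gluing of Calderbank--Singer spaces onto a LeBrun orbifold together with a relative index argument to count $\dim\ker(P_X)$. None of this (nor any substitute, such as Grauert's comparison theorem controlling formal neighborhoods) appears in your proposal, so the equality $\dim_{\CC}H^1(X,\Theta) = \sum_i(e_i-1)$ is asserted rather than proved.

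Second, you never establish that $\VV = \{0\}$ for the minimal resolution. You correctly say that passing from $\H_{-3}$ to $\H_{ess}$ ``excises the gauge directions $\VV$,'' but you then set $j = 2\dim_{\CC}H^1(X,\Theta)$, which implicitly assumes the excision removes nothing; if $\VV \neq 0$ then $j < j_\Gamma$ and the stated dimension count fails. The paper proves $\VV = \{0\}$ inside Theorem \ref{t2.3}: any $Y \in \WW$ has leading term a $\Gamma$-invariant linear holomorphic vector field on $\CC^2/\Gamma$ vanishing at the origin, which lifts (Proposition \ref{lift} in the Appendix) to a global holomorphic vector field on $X$; hence $Y$ itself is holomorphic and $\mathfrak{L}_Y J = 0$. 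Separately, the identification $\H_{-3}(X,\Lambda^{0,1}\otimes\Theta)\cong H^1(X,\Theta)$, which you flag as the main difficulty but leave open, is proved in the paper by a \v{C}ech-to-Dolbeault argument on $\CC^2\setminus\{0\}$ with Laurent expansions (surjectivity) and a weighted $\bar\partial$-Poincar\'e lemma (injectivity). Your final step, $m_\Gamma = d_\Gamma - \dim(\text{generic orbit of }\mathfrak{G})$, matches the paper's Section \ref{distable}, but it too relies on identifying $\mathfrak{G}$ in each case and on Lemma \ref{non-hyperKahler} (injectivity of $F$ in the non-hyperk\"ahler setting), which you do not address.
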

We let $m_{\Gamma}$ denote the dimension of the moduli space near the minimal resolution of $\CC^2/\Gamma$, where the dimension is defined in \eqref{mdefi}. Note $m_{\Gamma} < d_{\Gamma}$ due to the action of the automorphism group, and $m_{\Gamma}$ can be explicitly computed for all groups $\Gamma$, but those computations will not be done here. 
\iffalse
\begin{table}[h]
\caption{Dimension of local moduli space of scalar-flat K\"ahler metrics}
\label{dimtable}
\begin{tabular}{ll l l}
\hline
$\Gamma\subset{\rm U}(2)$ & $d_{\Gamma}$ & $m_{\Gamma}$\\
\hline\hline
\vspace{1mm}
$ \frac{1}{3}(1,1)$ & $5$ & $2$\\
\vspace{1mm}
$ \frac{1}{p}(1,1) , p \geq 4$ &  $ 2p -1   $ & $2 p - 5$\\
\vspace{1mm}
$ \frac{1}{p}(1,q), q \neq 1, p-1 $ & $j_{\Gamma} + k_{\Gamma}$ & $j_{\Gamma} + k_{\Gamma} -2$ \\
non-cyclic, not in ${\rm{SU}}(2)$ & $j_{\Gamma} + k_{\Gamma}$  & $ j_{\Gamma} + k_{\Gamma} -1$\\
\hline
\end{tabular}
\end{table}
\fi
\begin{remark}
\label{hrem}
{\em
The dimension of the moduli space of hyperk\"ahler metrics 
is known to be $3 k - 3$ in the $A_k, D_k$ and $E_k$ cases for $k \geq 2$, 
and equal to $1$ in the $A_1$ case \cite{KrCR}. Our method of
parametrizing by complex structures and K\"ahler classes overcounts 
in this case (i.e., $F$ is not injective), since a hyperk\"ahler metric is K\"ahler with respect to a
$2$-sphere of complex structures.
}
\end{remark}
Our final result applies to a generic deformation of the minimal resolution. 
\begin{theorem}
\label{t1.8}
Let $(X,g,J)$ be any scalar-flat K\"ahler ALE surface which deforms to 
the minimal resolution of $\CC^2/\Gamma$ through a path $(X,g_t,J_t)$ $(0\leq t\leq 1)$, 
where $g_1 = g$, $g_0$ is the minimal resolution, 
and $\|g_t-g_s\|_{C^{k,\alpha}_\delta(g_0)}\leq C\cdot |s-t|$ with $C>0$ a uniform constant for
any $0\leq s,t\leq 1$, $k\geq 4$, $-2<\delta<-1$.
If $\mathfrak{G}(g)  = \{e\}$ then the local moduli space 
$\mathfrak{F}$ is smooth near $g$ and is a manifold of dimension $m = m_{\Gamma}$.
\end{theorem}

\iffalse
\begin{remark}{\em
The assumption that $X$ deforms to the minimal resolution of $\CC^2/\Gamma$ is reasonable. In \cite[Section 8.9]{Joyce2000}, Joyce remarks that any K\"ahler ALE surface with group $\Gamma\subset {\rm{U}}(2)$ is birational to a deformation of $\CC^2/\Gamma$ (a complete proof of this fact can be found in \cite{HRS16}). There are several possible components of the deformation of such a cone, here we consider surfaces in the ``Artin component'' of deformations of $\CC^2/\Gamma$.  We note however that there are some 
known examples of scalar-flat K\"ahler metrics on non-Artin components, which are free quotients of 
hyperk\"ahler metrics of $A_k$-type, see for example \cite{Suvaina_ALE}.  
}
\end{remark}
\fi

\begin{remark}{\em
It was recently shown that K\"ahler ALE surface with group $\Gamma\subset {\rm{U}}(2)$ is birational to a deformation of $\CC^2/\Gamma$ \cite{HRS16}. There are several possible components of the deformation of such a cone, here we consider surfaces in the ``Artin component'' of deformations of $\CC^2/\Gamma$.  We note however that there are some 
known examples of scalar-flat K\"ahler metrics on non-Artin components, which are free quotients of 
hyperk\"ahler metrics of $A_k$-type, see for example \cite{Suvaina_ALE}.

}
\end{remark}
We end the introduction with an outline of the paper. In Section \ref{Notationsec}, we begin with the definitions of the weighted H\"older spaces which will be used throughout the paper. Then, in Section \ref{SDCS} we give some analysis of the complex analytic compactifications, due to Hein-LeBrun-Maskit \cite{HL15, LeBrunMaskit}, of K\"ahler ALE spaces. In Section~\ref{Kuranishi}, we study the deformation of complex structures using an adaptation of Kuranishi's theory to ALE spaces. The main point is that since the manifold is non-compact, the sheaf cohomology group $H^1(X, \Theta)$ should be replaced by an appropriate space of decaying harmonic forms. In Section~\ref{vkf}, several key results about
gauging and diffeomorphisms are proved, which are used to prove ``versality'' of the 
family constructed in Section~\ref{Kuranishi}. In Section~\ref{esssec}, a refined gauging procedure is carried out, to construct the Kuranishi family of ``essential'' deformations. In Section \ref{stable}, we generalize Kodaira-Spencer's stability theorem for K\"ahler structures to the ALE setting, using some arguments of Biquard-Rollin \cite{BiquardRollin}. In Section \ref{DSFK}, we adapt the LeBrun-Singer-Simanca theory of deformations of extremal K\"ahler metrics to the ALE setting \cite{LS93, LeBrun_Simanca}. In Section~\ref{vp}, we prove the versal property of the family $\mathfrak{F}$, using a local slicing theorem, and prove Theorem \ref{t2}. 
In Section~\ref{defminsec}, we restrict attention to the minimal resolution, and prove Theorems \ref{generalthm} and \ref{t1.8}.

\subsection{Acknowledgements}  The authors would like to thank Olivier Biquard, Ronan Conlon, Akira Fujiki, Ryushi Goto, Hans-Joachim Hein, Nobuhiro Honda, and Claude LeBrun, for numerous helpful discussions on deformations of complex structures. Olivier Biquard, Joel Fine, and Jason Lotay provided crucial assistance with the slicing arguments. We would also like to thank Michael Lock for valuable discussions on properties of subgroups of ${\rm{U}}(2)$. Finally, the authors would like to thank to the anonymous referee for detailed helpful remarks, and for suggesting numerous improvements to the paper.

\section{Notation}
\label{Notationsec}
We begin with the definition of weighted H\"older spaces, which will be used
throughout the paper.
\begin{definition}{\em
Let $E$ be a tensor bundle on $X$, with Hermitian metric $\Vert \cdot\Vert_h$. Let $\varphi$ be a smooth section of $E$. We fix a point $p_0\in X$, and define $r(p)$ to be the distance between $p_0$ and $p$. Then define 
\begin{align}
\Vert \varphi\Vert_{C^{0}_\delta} &:= \sup_{p\in X}\Big\{\Vert\varphi(p)\Vert_h\cdot (1+r(p))^{-\delta}\Big\}\\
\Vert \varphi\Vert_{C^{k}_\delta} &:= \sum_{|\mathcal{I}|\leq k}\sup_{p\in X}
\Big\{\Vert\nabla^{\mathcal{I}} \varphi(p)\Vert_h\cdot (1+r(p))^{-\delta+|\mathcal{I}|}\Big\},
\end{align}
where $\mathcal{I} = (i_1,\ldots,i_n),|\mathcal{I}|=\sum_{j=1}^n i_j$.
Next, define
\begin{align}
[\varphi]_{C^{\alpha}_{\delta-\alpha}} &:= \sup_{0<d(x,y)<\rho_{inj}}\Big\{\min\{r(x),r(y)\}^{-\delta+\alpha}\frac{\Vert\varphi(x)-\varphi(y)\Vert_h}{d(x,y)^\alpha}\Big\},
\end{align}
where $0<\alpha<1$, $\rho_{inj}$ is the injectivity radius, and $d(x,y)$ is the distance between $x$ and $y$. The meaning of the tensor norm is to use parallel transport along the unique minimal geodesic from  $y$ to $x$, and then take the norm of the difference 
at $x$. 
The weighted H\"older norm is defined by 
\begin{align}
\Vert\varphi\Vert_{C^{k,\alpha}_\delta} &:= \Vert \varphi\Vert_{C^{k}_\delta}+\sum_{|\mathcal{I}|=k}[\nabla^{\mathcal{I}} \varphi]_{C^{\alpha}_{\delta-k-\alpha}},
\end{align}
and the space $C^{k,\alpha}_{\delta}(X, E)$ is the closure of $\{\varphi \in C^{\infty}(X,E): \Vert \varphi\Vert_{C^{k,\alpha}_\delta}<\infty\}$.
}
\end{definition}
\begin{remark}{\em
The dual space of H\"older space is not a H\"older space, 
but for the purpose of computing the dimension of cokernel of a Fredholm operator
of order $o$,
\begin{align}
H : C^{k,\alpha}_{\delta} \rightarrow  C^{k - o,\alpha}_{\delta-o},
\end{align}
we consider the adjoint operator as mapping between H\"older spaces  
\begin{align}
H^* : C^{k,\alpha}_{-4- \delta +o} \rightarrow  C^{k - o,\alpha}_{-4-\delta},
\end{align}
since the adjoint weight to weight $\delta$ is $- 4-\delta$, and using elliptic regularity. 
}
\end{remark}

\section{ALE K\"ahler surfaces}
\label{SDCS}
In this section, we will prove several results about ALE K\"ahler surfaces which 
will be needed later.  We note that the results in this section do not use the scalar-flat assumption. 

In \cite{LeBrunMaskit, HL15}, LeBrun-Maskit and Hein-LeBrun analyzed the asymptotic behavior of the metric and complex structure near infinity of ALE K\"ahler surfaces. The next proposition gives a summary of their results. (See also \cite{ChiLi} for other 
related results on complex analytic compactifications). 
\begin{proposition}[Hein-LeBrun-Maskit]
\label{p1.4}
Let $X_{\infty}$ be an end of an ALE K\"ahler surface $(X,g,J)$ with metric $g$ asymptotic to Euclidean metric at rate 
\begin{align}
|\nabla_{Euc}^\mathcal{I}(g_{j,k}-\delta_{j,k}))| = O(\rho^{-|\mathcal{I}|-1-\epsilon})
\end{align} 
for $|\mathcal{I}|=0,1$ if $\epsilon>1/2$ or for $|\mathcal{I}|=0,\ldots,4$ if $\epsilon\in (0,1/2]$; here $\nabla_{Euc}$ denotes the Euclidean derivative. Then there is a surface $S$ containing an embedded holomorphic curve $C\simeq \CP^1$ with self-intersection 1, such that the universal cover $\tilde{X}_{\infty}$ of $X_{\infty}$ is biholomorphic to $S \setminus C$.

Let $(X,g,J)$ be a K\"ahler ALE surface, then $X$ has one end and can be analytically compactified to a smooth compact surface $\hat{X}$ by adding a tree of rational curves $E_{\infty}$. The surface $\hat{X}$ is a rational surface or a ruled surface. Also, $X$ can be compactified to an orbifold surface $\hat{X}_{orb}$ by adding a single rational curve at $\infty$.

Furthermore, if the order of $g$ satisfies $1<\mu<3$, then there exists an ALE coordinate, under which $|J-J_{Euc}|=O(r^{-3})$, $|g-g_{Euc}| = O(r^{-\mu})$. 

\end{proposition}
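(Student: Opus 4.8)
The plan is to follow LeBrun--Maskit and Hein--LeBrun: construct holomorphic coordinates near infinity, use them to fill in a curve at infinity, and then appeal to surface classification. First I would work on the universal cover $\tilde{X}_\infty$ of the end, which is diffeomorphic to $\RR^4\setminus \overline{B}$ and carries a complex structure $J$ with $J-J_{Euc}=O(\rho^{-1-\epsilon})$ in the sense of the hypothesis. The goal of this step is to promote the Euclidean complex coordinates $z_1,z_2$ (which are $J_{Euc}$-holomorphic) into genuine $J$-holomorphic functions. Since $J$ is close to $J_{Euc}$, the $(0,1)$-forms $\bar\partial_J z_i$ decay, so I would solve $\bar\partial_J u_i = -\bar\partial_J z_i$ with $u_i$ decaying at infinity, obtaining $J$-holomorphic functions $w_i = z_i + u_i$ with $w_i - z_i \to 0$.

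The essential analytic input, and the step I expect to be the main obstacle, is a weighted solvability and decay estimate for $\bar\partial_J$ on the end, sharp enough that the $w_i$ are asymptotic to $z_i$ to sufficiently high order for the induced map to be a biholomorphism onto a punctured neighborhood of infinity. This is precisely where the two decay regimes of the hypothesis enter: for $\epsilon > 1/2$ control of $g$ in $C^1$ suffices, while for $\epsilon\in(0,1/2]$ one must control derivatives up to order four to close the estimates. Granting this, I would pass from $(w_1,w_2)$ to the ``projective'' coordinates $1/w_1$ and $w_2/w_1$ on the region where $|w_1|$ dominates (and symmetrically), which extend holomorphically across the locus at infinity. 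This realizes $\tilde{X}_\infty$ as $S\setminus C$ for a smooth surface $S$ with $C\cong\CP^1$ embedded, and computing the transition functions of the normal bundle, exactly as for the line at infinity in $\CP^2$, gives $C^2 = +1$.

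Next I would descend to the quotient. The deck group $\Gamma\subset\U(2)$ acts holomorphically near infinity and, being asymptotically linear, extends to a holomorphic action on a neighborhood of $C$ fixing $C$ setwise; the quotient therefore compactifies by a single rational curve carrying the orbifold structure $C/\Gamma$, which is the orbifold compactification $\hat{X}_{orb}$. Resolving the resulting cyclic quotient singularities along this curve introduces Hirzebruch--Jung strings and produces the smooth compactification $\hat{X}$, whose added divisor $E_\infty$ is a tree of rational curves. That $X$ has a single end follows from the ALE model $(\RR^4\setminus\overline{B})/\Gamma$ being connected; alternatively, two ends would yield two disjoint curves of positive self-intersection in the compactification, contradicting the Hodge index theorem.

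Finally, to classify $\hat{X}$ I would use that the compactification contains a smooth rational curve of positive self-intersection. By adjunction, $C^2 = 1$ forces $-K_S\cdot C = 3 > 0$, so $K$ is not nef; more concretely, Riemann--Roch shows that $C$ moves in a linear system of positive dimension, producing a covering family of rational curves and hence Kodaira dimension $-\infty$. By the Enriques--Kodaira classification, a surface of Kodaira dimension $-\infty$ is rational or ruled, which is the stated conclusion. I expect the careful verification that the compactifying map extends across $C$ as an honest biholomorphism (rather than merely on formal neighborhoods), together with the bookkeeping of the quotient and its resolution, to be the remaining delicate points.
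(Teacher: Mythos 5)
You should know at the outset that the paper contains no proof of Proposition \ref{p1.4} to compare against: it is stated explicitly as a ``summary'' of results of Hein--LeBrun--Maskit and is imported wholesale from \cite{LeBrunMaskit, HL15}. Your outline is therefore measured against the strategy of those references, and at the level of architecture it matches: asymptotically holomorphic coordinates on the universal cover of the end, compactification by a $+1$ curve, quotient and resolution, then classification. The analytic heart --- weighted solvability of $\bar\partial_J$ with asymptotics sharp enough that $(w_1,w_2)$ is injective near infinity --- is, as you acknowledge, only asserted; note also that your equation $\bar\partial_J u_i = -\bar\partial_J z_i$ is posed on a manifold with an inner boundary, so no closed-manifold or $\CC^2$ Poincar\'e lemma applies directly (compare the proof of Proposition \ref{p2.5} in the paper, which must cut off and extend to $\CC^2$, and can only do so because the error there is measured against $\bar\partial_{Euc}$).

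The genuine gap is in your final, classification step. The curve $C$ with $C^2=1$ lives in $S$, and $S$ is not a compact surface: it is the germ $\tilde{X}_\infty \cup C$, a non-compact neighborhood of the curve at infinity. Riemann--Roch and linear systems on $S$ therefore make no sense as stated; the correct tool is Kodaira's deformation theorem ($N_C \cong \oo_{\CP^1}(1)$, so $H^1(C,N_C)=0$ and $C$ moves in a two-parameter family sweeping out a neighborhood of itself). More seriously, the compact surface you must classify is $\hat{X}$, but the positive curve you exhibited is not in $\hat{X}$: the divisor $E_\infty$ is obtained by quotienting $C$ by $\Gamma$ and resolving the resulting cyclic singularities, so its components are typically Hirzebruch--Jung curves of self-intersection $\leq -2$ together with a proper transform of $C/\Gamma$ whose square need not be positive; moreover, if $\hat{X}$ were ruled over a curve of positive genus it would contain no rational curve of positive self-intersection at all, so your stated criterion cannot by itself decide the dichotomy. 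To close the argument you must transport the Kodaira family of deformations of $C$ down to the quotient and up to the resolution --- their images are rational curves sweeping out a neighborhood of $E_\infty$ in $\hat{X}$ --- which gives uniruledness and hence Kodaira dimension $-\infty$; and you must also establish that $\hat{X}$ is projective or at least of K\"ahler type (for instance from the effective $\mathbb{Q}$-divisor of positive square supported on $E_\infty$ obtained by pulling back $C/\Gamma$), in order to exclude class VII surfaces before invoking Enriques--Kodaira. With those two repairs your sketch becomes a faithful outline of the cited proof.
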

\begin{remark}\rm
The orbifold compactification $\hat{X}_{orb}$ has cyclic singularities on the rational curve at $\infty$. The smooth compactification $\hat{X}$ is obtained from $\hat{X}_{orb}$ by resolving these singularities.
\end{remark}
Consider the 
space of holomorphic vector fields on $X$ with at most growth 
of order~$\tau$
\begin{align}
\Z_\tau(X, \Theta) = \{ Z \in \Gamma(X, \Theta) \ | \ \bar\partial Z = 0, \ Z = O(r^\tau)  \text{ as } r \rightarrow \infty\}.
\end{align}
The first vanishing result we need is the following. 
\begin{proposition}
\label{p2.5}
Let $(X,g,J)$ be an ALE K\"ahler surface. If $\tau < 0$ then 
\begin{align}
\dim \Z_\tau(X, \Theta) = 0.
\end{align}
\end{proposition}
\begin{proof}
Let $X_\infty = X\setminus \overline{B(R)}$. Let $R$ be sufficiently large, then by Proposition \ref{p1.4}, the universal cover $\tilde{X}_\infty$ can be analytically compactified by adding a rational curve $C$ at infinity, where $C$ has self-intersection $+1$. By \cite{HL15}, there exists a smooth map $[\xi_1,\xi_2, f]$ that maps $S = \tilde{X}_\infty\cup C$ to $\CP^2$, which maps $C$ holomorphically to $\{f=0\}$ as a curve $\CP^1\subset \CP^2$, where 
\begin{align}
\xi_j = \{\text{holomorphic part}\}+O(|f|^3),
\end{align}
as $f \rightarrow 0$. Let 
\begin{align}
z_j = x_j+\sqrt{-1}y_j = \frac{\xi_j}{f}, (j=1,2)
\end{align}
where  $\{x_1,y_1,x_2,y_2\}$ gives an ALE coordinate on $\tilde{X}_\infty$, and $|J-J_{Euc}| = O(|z|^{-3})$ as $z \rightarrow \infty$. Let 
\begin{align}
\label{coordinate}
(v,w) = \Big( \frac{1}{z_1},\frac{z_2}{z_1} \Big)
\end{align}
where $\{v = 0\}$ represents the complement of one point in $C$.

First, let $\sigma$ be a decaying holomorphic vector field on $X$, which can be lifted to $\tilde{\sigma}$ on $\tilde{X}_\infty$. Note that
$\tilde{\sigma}$ can be extended to $\CC^2$ smoothly by using a cut-off function. 
Since $\bar\partial-\bar\partial_{Euc} = O(|z|^{-3})$ and $\bar\partial \tilde{\sigma}=0$ on $\tilde{X}_\infty$,
\begin{align} 
\bar\partial_{Euc}\tilde{\sigma} = \sum_{i,j} h_{i,j}d\bar{z}_i\otimes \frac{\partial}{\partial z_j}=O(|z|^{-3}|\sigma|),
\end{align}
as $z \rightarrow \infty$. 
By using the $\bar\partial_{Euc}$-Poincar\'e lemma, there exist 
\begin{align}
p_j = O(|z|^{-2}\sigma)\in C^{\infty}(\CC^2), \, \bar\partial_{Euc} p_j = \sum_i h_{i,j} d\bar{z}_i.
\end{align}
The formula for $p_j$ can be written out explicitly as follows. Let
\begin{align}
\label{Poin}
\begin{split}
q_j &= \frac{1}{2\pi\sqrt{-1}}\int_{\CC}h_{2,j}(z_1,\zeta_2)\frac{d\zeta_2\wedge d\bar{\zeta}_2}{\zeta_2-z_2} \\
q'_j &= \frac{1}{2\pi\sqrt{-1}}\int_{\CC}(h_{1,j}-\bar\partial_1 q_j)(\zeta_1,z_2)\frac{d\zeta_1\wedge d\bar{\zeta}_1}{\zeta_1-z_1},
\end{split}
\end{align}
where the integral formula is valid since $h_{i,j} = o(|z|^{-3})$. Then $p_j = q_j+q'_j$.
Then we have $\tilde{\sigma}-\sum_j p_j\frac{\partial}{\partial z_j}$ is a decaying $\bar\partial_{Euc}$-holomorphic vector field on $\CC^2$, which must vanish identically. 
The growth rate of $\sigma$ can be dropped by $2$ iteratively by this argument, so we can assume that $\sigma$ decays at any rate in the coordinates $\{z_1,z_2\}$.

On
$\tilde{X}_\infty$, there exists a holomorphic $(2,0)$-form 
$\Omega = dz_1\wedge dz_2+O(|z|^{-2+\epsilon})$ as 
$|z| \rightarrow \infty$. Let $\tilde{\sigma}^*\in \Gamma(\tilde{X}_\infty,\Omega^1)$ denote the contraction of $\tilde{\sigma}$ with $\Omega$. The section $\tilde{\sigma}^*$ is also holomorphic and we have $\tilde{\sigma}^* = O(|z|^{-10})$ as $z \rightarrow \infty$ since we can assume $\tilde{\sigma}$ decays to any order. The coordinate change gives: $dz_1 = \frac{-1}{v^2}dv, dz_2 = \frac{1}{v}dw-\frac{w}{v^2}dv$, $|z|^2 = \frac{1}{|v|^2}(1+|w|^2)$. Then $\tilde{\sigma}^* = O(|v|^8)$ near $C$ and can be extended to $C$ with $\tilde{\sigma}^* = 0$ on $C$. Consider the exact sequence
\begin{align}
0\rightarrow N_C^*\rightarrow \Omega^1|_C \rightarrow \Omega^1(C)\rightarrow 0,
\end{align}
where $\Omega^1|_C$ is the restriction of the bundle $\Omega^1$ on $C$, $\Omega^1(C)$ is the cotangent bundle of $C$ and $N_C$ is the normal bundle of $C$ in $S$. Then we have $\Omega^1|_C = \oo(-2)\oplus \oo(-1)$.
This implies $\tilde{\sigma}^*$ vanishes on any rational curve with self-intersection $+1$ on $S$. Since $C$ has self-intersection $+1$, $H^1(C,N_C^*\otimes \Theta(C)) = 0$, so $C$ is rigid in $S$. Then there is an open neighborhood $U\subset S$ of $C$, such that $\tilde{\sigma}^*$ vanishes identically over $U$. Then $\sigma$ vanishes on an open subset of $X$, which implies that $\sigma$ vanishes identically on $X$.

\end{proof}
Next, we consider harmonic $(0,2)$-forms with values in the holomorphic tangent bundle.
\begin{proposition}
\label{p2.6}
Let $X$ be a K\"ahler ALE surface. If $\delta < 0$, then $\H_\delta(X,\Lambda^{0,2}\otimes\Theta)=0$.
\end{proposition}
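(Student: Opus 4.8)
The plan is to reduce the statement to a vanishing result for decaying \emph{holomorphic} sections of a holomorphic bundle, which can then be handled by the same asymptotic analysis used in the proof of Proposition \ref{p2.5}. First I would exploit that on a complex surface $\Lambda^{0,3}=0$, so for any $\psi\in\H_\delta(X,\Lambda^{0,2}\otimes\Theta)$ one has $\bar\partial\psi=0$ automatically, and therefore $0=\square\psi=\bar\partial\bar\partial^*\psi$. Pairing against $\psi$ over the ball $B(R)$ and integrating by parts gives
\begin{align}
0=\int_{B(R)}\langle\bar\partial\bar\partial^*\psi,\psi\rangle=\int_{B(R)}|\bar\partial^*\psi|^2-\int_{\partial B(R)}(\,\cdots\,),
\end{align}
where, by interior elliptic estimates on the ALE end, $\bar\partial^*\psi=O(r^{\delta-1})$, so the boundary integrand is bilinear in $\psi$ and $\bar\partial^*\psi$ and is $O(r^{2\delta-1})$. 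Since $\mathrm{vol}(\partial B(R))=O(R^3)$, the boundary term is $O(R^{2\delta+2})$, which tends to $0$ precisely because $\delta<-1$. Hence $\bar\partial^*\psi=0$, and the content of the proposition is that a simultaneously $\bar\partial$- and $\bar\partial^*$-closed decaying $(0,2)$-form with values in $\Theta$ must vanish.

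Next I would pass to a holomorphic object via the conjugate-linear Hodge star. Writing $\bar{\ast}$ for the (conjugate-linear, pointwise-isometric) star operator $\Lambda^{p,q}\otimes E\to\Lambda^{n-p,n-q}\otimes E^*$, one has the pointwise identity $\bar\partial_E^*=-\,\bar{\ast}\,\bar\partial_{E^*}\,\bar{\ast}$, with no curvature correction. With $E=\Theta$ and $n=2$, the equation $\bar\partial_\Theta^*\psi=0$ becomes $\bar\partial_{\Omega^1}(\bar{\ast}\psi)=0$, where $\bar{\ast}\psi\in\Gamma(\Lambda^{2,0}\otimes\Omega^1)$. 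Thus $\bar{\ast}\psi$ is a global holomorphic section of $K_X\otimes\Omega^1$, where $K_X=\Lambda^{2,0}$, and since $\bar{\ast}$ is a pointwise isometry it satisfies the same decay estimate $\bar{\ast}\psi=O(r^\delta)$ with $\delta\in(-2,-1)$. It therefore suffices to prove that every decaying holomorphic section of $K_X\otimes\Omega^1$ vanishes identically.

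Finally I would establish this vanishing by repeating the asymptotic argument in the proof of Proposition \ref{p2.5}. Lifting $\bar{\ast}\psi$ to the universal cover $\tilde{X}_\infty$ of the end and writing it in the Euclidean frame $dz_1\wedge dz_2\otimes dz_j$, its coefficients are functions that are holomorphic up to $O(|z|^{-3})$-errors and decay at rate $\delta$; the $\bar\partial_{Euc}$-Poincar\'e lemma estimate \eqref{Poin} then lets me drop the decay rate by $2$ iteratively, exactly as in that proof, so that $\bar{\ast}\psi$ decays to all orders in $z$. In the compactifying coordinates $(v,w)$ of \eqref{coordinate} one has $dz_1\wedge dz_2=-v^{-3}\,dv\wedge dw$, so the $v^{-3}$ factor is absorbed by the super-polynomial decay and $\bar{\ast}\psi$ extends holomorphically across the curve $C$ at infinity, vanishing there to high order. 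I would then conclude as in Proposition \ref{p2.5}: since $C$ has self-intersection $+1$ it moves in a family sweeping out a neighborhood $U$ of $C$, which forces $\bar{\ast}\psi$ to vanish on $U$, and unique continuation for the holomorphic section gives $\bar{\ast}\psi\equiv 0$ on $X$, hence $\psi\equiv 0$.

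The step I expect to be the main obstacle is this last one: tracking the bundle $K_X\otimes\Omega^1$ through the compactification and upgrading the high-order vanishing of $\bar{\ast}\psi$ along $C$ into vanishing on an open set, where one must adapt—rather than directly quote—the rigidity and normal-bundle computation appearing at the end of the proof of Proposition \ref{p2.5}.
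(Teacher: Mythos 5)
Your proposal is correct and follows essentially the same route as the paper's proof: apply the conjugate Hodge star to pass to a decaying section of $\Lambda^{2,0}\otimes\Omega^1$, improve its decay iteratively via the Euclidean $\bar\partial$-Poincar\'e lemma as in Proposition \ref{p2.5}, extend it across the curve $C$ at infinity, and kill it using the negativity of the restriction $\Omega^{2,0}\otimes\Omega^1|_{C'} = \oo(-5)\oplus\oo(-4)$ on the self-intersection $+1$ curves sweeping out a neighborhood of $C$, followed by unique continuation. The only (cosmetic) difference is that you first establish $\bar\partial^*\psi=0$ by integration by parts so that $\bar{*}\psi$ is globally holomorphic, whereas the paper works directly with the $\square$-harmonic form $\sigma^*=\bar{*}\sigma$ and the holomorphy of its leading asymptotic coefficients; both versions are valid.
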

\begin{proof}
The proposition follows an argument in \cite[Theorem 4.2]{LeBrunMaskit},
with minor modifications. For completeness, we give a proof 
here following their idea.
Let $\sigma\in \H_\delta(X,\Lambda^{0,2}\otimes\Theta)$. Recall that the conjugated Hodge star operator $\bar{*}$ maps $\sigma$ to a $\square$-harmonic form $\sigma^*\in \H_\delta(X,\Lambda^{2,0}\otimes\Omega^1)$. Let $\tilde{\sigma}^*$ be its lifting on $\tilde{X}_\infty$ . Following the same notion as in the proof of Proposition \ref{p2.5}, we have 
\begin{align}
\tilde{\sigma}^* = \sum_j a_j d z_1\wedge d z_2\otimes d z_j + O(|z|^{-3}|\tilde{\sigma}^*|),
\end{align}
where $a_j$ are $\bar\partial_{Euc}$-holomorphic functions and $a_j = O(|z|^{\delta})$
as $z \rightarrow \infty$. As shown in the proof of Proposition \ref{p2.5}, $a_j = 0$, so $\tilde{\sigma}^* = O(|z|^{\delta-3})$ as $z \rightarrow \infty$. 
Since the growth rate of $\tilde{\sigma}^*$ can be dropped by $3$ iteratively, 
$\tilde{\sigma}^*$ can decay to any order under the coordinate of $\{z_1,z_2\}$, then $\tilde{\sigma}^*$ can be extended to $C$ with $\tilde{\sigma}^*=0$ on $C$. Since
\begin{align}
\Omega^{2,0}\otimes \Omega^1|_C = (\Omega^1(C)\oplus N_C^*)\otimes (\Omega^{2,0}(C)\otimes N_C^*),
\end{align}
we have
\begin{align}
\Omega^{2,0}\otimes\Omega^1|_C = (\oo(-2)\oplus\oo(-1))\otimes (\oo(-2)\otimes\oo(-1)) = \oo(-5)\oplus \oo(-4).
\end{align}
This implies that $\tilde{\sigma}^*$ vanishes on any rational curve with self-intersection $+1$ on $\tilde{X}_\infty$. By the same argument as in Proposition \ref{p2.5}, there is an open neighborhood $U\subset S$ of $C$, such that $\tilde{\sigma}^*$ vanishes identically over $U$. Then $\sigma$ vanishes on an open subset of $X$, which implies that $\sigma$ vanishes identically on $X$. 
\end{proof}

The next proposition shows that $\H_\delta(X,\Lambda^{0,1}\otimes\Theta)$ automatically has a faster decaying rate.
\begin{proposition}
\label{dcyprop}
Let $X$ be an ALE K\"ahler surface, and $\hat{X}$ be its analytic compactification. 
Then for any $-3 < \tau < 0$ we have 
\begin{align}
\label{aa1}
\H_{\tau}(X,\Lambda^{0,1}\otimes\Theta) =\H_{-3}(X,\Lambda^{0,1}\otimes\Theta),
\end{align}
and if $\Gamma$ is not cyclic or $\Gamma = \ZZ/2\ZZ$, then 
\begin{align}
\label{aa2}
\H_{\tau}(X,\Lambda^{0,1}\otimes\Theta) =\H_{-4}(X,\Lambda^{0,1}\otimes\Theta).
\end{align}
\end{proposition}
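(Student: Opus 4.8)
The plan is to prove both equalities as decay-improvement statements: given $\sigma\in\H_\tau(X,\Lambda^{0,1}\otimes\Theta)$ with $-3<\tau<0$, I will show that $\sigma$ cannot carry a nonzero leading term at any weight in $(-3,0)$, and under the extra hypothesis on $\Gamma$ at any weight in $(-4,0)$ either, so that $\sigma$ must decay at the asserted faster rate. Since $\H_{-3}\subset\H_\tau$ and $\H_{-4}\subset\H_\tau$ trivially, the content is the reverse inclusion, and the only mechanism by which a weighted space of $\square$-harmonic sections can change as the weight decreases is the presence of a homogeneous $\square$-harmonic ``indicial'' term at an intermediate weight; thus the whole argument reduces to excluding such terms at weights $-2$ and $-3$.

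First I would set up the flat model near infinity exactly as in the proofs of Proposition \ref{p2.5} and Proposition \ref{p2.6}. Lifting $\sigma$ to the universal cover $\tilde X_\infty$ of the end and writing it in the ALE coordinates $\{z_1,z_2\}$ as $\tilde\sigma=\sum_{i,j}h_{i,j}\,d\bar z_i\otimes\partial_{z_j}$, the estimate $\bar\partial-\bar\partial_{Euc}=O(|z|^{-3})$ used there shows that $\square-\square_{Euc}$ has coefficients of size $O(|z|^{-3})$, so the leading behavior of $\tilde\sigma$ is governed by the flat operator. Applying the $\bar\partial_{Euc}$-Poincar\'e lemma \eqref{Poin} iteratively to peel off the faster-decaying part, as was done for vector fields in Proposition \ref{p2.5}, I reduce to the case where $\tilde\sigma$ agrees to leading order with a homogeneous $\square_{Euc}$-harmonic section whose coefficients are homogeneous harmonic functions on $\RR^4$. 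Because $\sigma$ descends to $X$, this leading term is $\Gamma$-invariant. Homogeneous harmonic functions on $\RR^4$ of negative integer degree occur only in degrees $-2,-3,\dots$, so in the range $(-3,0)$ the only candidate is degree $-2$ and in $(-4,0)$ the additional candidate is degree $-3$; these I would enumerate explicitly as the $\Gamma$-invariant combinations of $|z|^{-2}\,d\bar z_i\otimes\partial_{z_j}$ and of $z_k|z|^{-4}$, $\bar z_k|z|^{-4}$ times $d\bar z_i\otimes\partial_{z_j}$, using that $\Lambda^{0,1}\otimes\Theta\cong\bar V\otimes V$ as a $\Gamma$-representation.

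The heart of the argument is to show that none of these $\Gamma$-invariant candidates can actually occur as the leading term of a \emph{global} harmonic section; this is a global statement that cannot be read from the flat model, since for instance when $\Gamma=\ZZ/2\ZZ$ the section $|z|^{-2}\sum_j d\bar z_j\otimes\partial_{z_j}$ is $\Gamma$-invariant and $\square_{Euc}$-harmonic, yet \eqref{aa1} must still rule it out globally. To do this I would transfer each candidate to the compactification $S=\tilde X_\infty\cup C$ of Proposition \ref{p1.4} using the coordinates $(v,w)=(1/z_1,z_2/z_1)$ of \eqref{coordinate}, under which $|z|\sim|v|^{-1}$ and the frame $d\bar z_i\otimes\partial_{z_j}$ acquires explicit powers of $v,\bar v$. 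Computing the resulting order of vanishing along $C=\{v=0\}$ and using the splitting of the relevant bundle restricted to $C\simeq\CP^1$ into line bundles $\oo(a)\oplus\oo(b)$ obtained from the normal-bundle sequence $0\to N_C^*\to\Omega^1|_C\to\Omega^1(C)\to0$, exactly as in Propositions \ref{p2.5} and \ref{p2.6}, I would argue that a nonzero leading term forces a nonzero section of a negative line bundle on $C$ and hence must vanish; the rigidity of $C$, through $H^1(C,N_C^*\otimes\Theta(C))=0$, then propagates this vanishing to a neighborhood and improves the decay of $\sigma$. Carrying this out at degree $-2$ for every $\Gamma$ yields \eqref{aa1}, and repeating it at degree $-3$ yields \eqref{aa2}.

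I expect the main obstacle to be the case distinction between cyclic $\Gamma$, the exceptional group $\ZZ/2\ZZ$, and non-cyclic $\Gamma$. The degree $-2$ and degree $-3$ candidates that survive the $\Gamma$-invariance test, and the line-bundle degrees $a,b$ appearing on $C$, both depend on the representation of $\Gamma\subset\U(2)$ on $\bar V\otimes V$ and on how the exceptional divisor of $X$ sits inside $\hat X$. The delicate point is that at degree $-3$ the geometric obstruction must be present precisely when $\Gamma$ is non-cyclic or equal to $\ZZ/2\ZZ$ and must \emph{fail} for the remaining cyclic groups, so the vanishing argument has to be tracked carefully enough to reproduce exactly this dichotomy. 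Matching the equivariant count of candidate indicial terms against the cohomological vanishing on the curves of the compactification is, I anticipate, the most technical part of the proof.
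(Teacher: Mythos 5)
There is a genuine gap, and it sits exactly at the step you identify as ``the heart of the argument.'' You claim that the degree $-2$ candidate cannot be excluded from the flat model, citing $f = r^{-2}\sum_j d\bar z_j\otimes\partial_{z_j}$ as a $\Gamma$-invariant, $\square_{Euc}$-harmonic obstruction, and you therefore route the proof through the compactification. But this misses that a decaying $\square$-harmonic section is automatically both $\bar\partial$-closed and $\bar\partial^*$-closed (integrate $\langle\square\phi,\phi\rangle$ by parts over $B(R)$; the boundary terms vanish once the expansion gives $\phi=O(r^{-2})$), so the leading term must satisfy the \emph{flat} equations $\bar\partial_{Euc}f=0$ and $\bar\partial^*_{Euc}f=0$, not merely $\square_{Euc}f=0$. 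This kills it locally: for $f=\sum_{i,j}f_{i,j}r^{-2}d\bar z_i\otimes\partial_{z_j}$ with constant $f_{i,j}$, the coefficient of $d\bar z_1\wedge d\bar z_2\otimes\partial_{z_j}$ in $\bar\partial_{Euc}f$ is $-r^{-4}(z_1f_{2,j}-z_2f_{1,j})$, which forces $f_{i,j}=0$ for every $\Gamma$ (your candidate is harmonic but visibly not $\bar\partial_{Euc}$-closed, so it was never admissible). This is precisely the paper's argument, and it also disposes of the degree $-3$ step without any geometry: when $-1\in\Gamma$ (i.e.\ $\Gamma$ non-cyclic or $\ZZ/2\ZZ$), the degree $-3$ coefficients $A_{kl}$ are linear in $z_k,\bar z_k$, hence odd under $z\mapsto -z$ while $d\bar z_k\otimes\partial_{z_l}$ is even, so invariance alone forces $f=0$ --- there is no delicate ``matching of geometric obstructions against the dichotomy'' to perform.

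Beyond being unnecessary, your substitute mechanism would not work. The sheaf-theoretic vanishing in Propositions \ref{p2.5} and \ref{p2.6} applies to \emph{holomorphic} objects (a holomorphic vector field, or a $\square$-harmonic $(2,0)$-type form, which is holomorphic for bidegree reasons): these extend across $C$ and then negativity of $\oo(a)\oplus\oo(b)$ plus rigidity of $C$ propagates identical vanishing. A $\square$-harmonic element of $\H_\tau(X,\Lambda^{0,1}\otimes\Theta)$ is $\bar\partial$-closed but not holomorphic, so neither it nor its leading term extends as a holomorphic section over $C$, and the line-bundle argument has nothing to act on; moreover rigidity propagates \emph{vanishing on an open set to vanishing everywhere}, which is the wrong conclusion here --- it improves nothing about decay, it annihilates the section. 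Indeed, if your scheme did run, it would prove $\H_{-3}(X,\Lambda^{0,1}\otimes\Theta)=0$, contradicting Theorem \ref{t2.3}, which identifies this space with $H^1(X,\Theta)\neq 0$ on minimal resolutions. The correct division of labor is the paper's: closedness/co-closedness of the leading term at degree $-2$, and $\Gamma$-invariance at degree $-3$.
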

\begin{proof}
First, we will show that any decaying harmonic element $\phi\in H^0(X,\Lambda^{0,1}\otimes\Theta)$ has a decay rate of at least $O(r^{-3})$ at infinity. 
Since $X$ is K\"ahler, by \cite[Part 5]{Moroianu}, the operator $\square$ admits an expansion near infinity of the form 

\begin{align}
\square = \frac{1}{2}\nabla^*\nabla + \mathfrak{R} \
\end{align}
on $\Lambda^{0,1}\otimes\Theta$, where $\nabla$ is the covariant derivative on ${\Lambda^{0,1}\otimes\Theta}$, and $\mathfrak{R}$ is an operator given by curvature forms acting on the same bundle. The leading term of $\nabla^*\nabla$ is the Euclidean Laplacian $\Delta_{Euc}$, so we have
\begin{align}
\label{bochner}
\square = \frac{-1}{2}\Delta_{Euc}+Q,
\end{align}
where $Q = A(\nabla\phi)+Ric(\phi)$ denotes higher order terms. The element 
$\phi$ admits an expansion of the form 
\begin{align}
\phi = f + O(r^{-3 + \epsilon}),
\end{align}
as $r\to \infty$, where $f$ is of the form 
\begin{align}
f = \sum_{i,j}\frac{f_{i,j}}{r^2} d\bar{z}_i\otimes\frac{\partial}{\partial z_j},
\end{align}
and $f_{i,j}$ are constants. 
Since $\phi$ is $\square$-harmonic, both $\bar\partial f =0$ and $\bar\partial^* f=0$. It is easy to see that 
this implies that each $f_{i,j}=0$. Consequently, $\phi = O(r^{-3 + \epsilon})$
as $r \rightarrow \infty$. 
Since $\phi$ admits an expansion with harmonic leading term, we must have $\phi = O(r^{-3})$. 
 
Furthermore, if $\Gamma$ is not cyclic or $\Gamma = \ZZ/2\ZZ$, then $\Gamma$ contains the element $-1$. 
The leading term of $\phi$ is of the form 
\begin{align}
f = \sum_{k,l}\frac{A_{kl}}{r^4}d\bar{z}_k\otimes\frac{\partial}{\partial z_l},
\end{align}
where $A_{kl}$ is a linear combination of $z_1,\bar{z}_1, z_2, \bar{z}_2$. Since any such nonzero $f$ 
is not invariant under the action of $-1$, we must have $f=0$, and then $\phi = O(r^{-4})$. This finishes the proof of \eqref{aa1} and \eqref{aa2}.

\end{proof}
The following Proposition will be used in Section \ref{stable}. 
\begin{proposition}
\label{p2.8}
Let $(X,g,J)$ be an ALE K\"ahler surface. Then
\begin{align}
b_1(X) = 0,
\end{align}
where $b_1(X)$ denotes the first Betti number of $X$. 
Furthermore, for $\tau < 0$, 
\begin{align}
\label{aa4}
\dim (\H_{\tau}(X,\Lambda^{0,2})) = \dim (\H_{\tau}(X,\Lambda^{2,0})) = 0.
\end{align}
\end{proposition}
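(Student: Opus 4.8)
\emph{Overall plan.} I would prove the two assertions by separate arguments. The vanishing of $\H_\tau(X,\Lambda^{2,0})$ and $\H_\tau(X,\Lambda^{0,2})$ should follow by adapting the compactification arguments of Propositions \ref{p2.5} and \ref{p2.6}. For $b_1(X)=0$, the plan is first to show, using only the topology of the end, that every class in $H^1(X;\RR)$ admits a decaying harmonic representative, and then to annihilate such representatives by the same compactification technique.

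\emph{Harmonic $(2,0)$- and $(0,2)$-forms.} For $\sigma\in\H_\tau(X,\Lambda^{2,0})$ with $\tau<0$, I would first note that $\bar\partial^*$ maps $\Lambda^{2,0}$ into $\Lambda^{2,-1}=0$, so harmonicity together with the decay of $\sigma$ (which legitimizes integration by parts) forces $\bar\partial\sigma=0$; thus $\sigma$ is a decaying holomorphic section of $K_X=\Lambda^{2,0}$. Lifting to $\tilde X_\infty$ and iterating the $\bar\partial_{Euc}$-Poincar\'e correction as in Proposition \ref{p2.5}, I would arrange that $\tilde\sigma$ decays to all orders in $\{z_1,z_2\}$, so that in the coordinates \eqref{coordinate} it extends across $C$ and vanishes there to high order. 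Adjunction gives $K_S|_C=K_C\otimes N_C^*=\oo(-2)\otimes\oo(-1)=\oo(-3)$, which has no nonzero sections; the same holds on every self-intersection-$+1$ rational curve of $S$, so $\tilde\sigma$ vanishes on all of them, hence on a neighborhood of $C$, and therefore $\sigma\equiv0$. For $\Lambda^{0,2}$ I would simply use complex conjugation, which carries $\Lambda^{0,2}$ to $\Lambda^{2,0}$, commutes with $\square$, and preserves the weight, giving $\dim\H_\tau(X,\Lambda^{0,2})=\dim\H_\tau(X,\Lambda^{2,0})=0$.

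\emph{Reducing $b_1$ to decaying forms.} Writing $X=\mathrm{int}(\bar X)$ with $\bar X$ compact and $\partial\bar X=Y=S^3/\Gamma$, I would use that $\Gamma$ is finite and acts freely on $S^3$, so $\pi_1(Y)=\Gamma$ is finite and $H^1(Y;\RR)=0$. In the long exact sequence of $(\bar X,Y)$ the map $H^1(\bar X)\to H^1(Y)=0$ vanishes, so $H^1_c(X)=H^1(\bar X,Y)\to H^1(\bar X)=H^1(X)$ is surjective. Every class in $H^1(X;\RR)$ is then compactly supportable, hence—by the standard $L^2$-Hodge theory on complete manifolds, under which the image of $H^1_c\to H^1$ is represented by $L^2$-harmonic forms—represented by an $L^2$, and therefore decaying, harmonic $1$-form. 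I would conclude $b_1(X)=\dim H^1(X)\le\dim\mathcal{H}^1_{L^2}(X)$.

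\emph{Killing decaying $1$-forms, and the expected obstacle.} On the K\"ahler manifold $X$ the space $\mathcal{H}^1_{L^2}(X)$ splits by type as $\mathcal{H}^{1,0}_{L^2}(X)\oplus\mathcal{H}^{0,1}_{L^2}(X)$, and an $L^2$-harmonic $(1,0)$-form is a decaying holomorphic $1$-form. Applying the argument of Proposition \ref{p2.5}—lift to $\tilde X_\infty$, extend across $C$, and use that $\Omega^1_S|_C=\oo(-2)\oplus\oo(-1)$ has no nonzero sections—such a form must vanish, so $\mathcal{H}^1_{L^2}(X)=0$ and $b_1(X)=0$. I expect the main obstacle to be precisely the reduction in the previous step: a priori the topological $b_1$ is carried by \emph{bounded} harmonic $1$-forms, whereas the compactification argument only controls \emph{decaying} ones. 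This is the very distinction that would arise if $\hat X$ were irrationally ruled, where a holomorphic $1$-form pulled back from the base is bounded but not $L^2$; it is the vanishing $H^1(Y;\RR)=0$ that forces surjectivity of $H^1_c(X)\to H^1(X)$, guarantees decay, and thereby closes this gap.
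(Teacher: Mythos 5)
Your treatment of $\H_\tau(X,\Lambda^{2,0})$ and $\H_\tau(X,\Lambda^{0,2})$ is essentially the paper's own argument: the paper likewise reduces to a decaying holomorphic section of the canonical bundle, lifts to $\tilde X_\infty$, iterates the flat Poincar\'e-lemma correction to gain arbitrary decay, extends across $C$, and uses $\Omega^{2,0}|_C = \Omega^{2,0}(C)\otimes N_C^* = \oo(-3)$ to force vanishing near $C$ and hence everywhere, with conjugation handling $\Lambda^{0,2}$. One point you should patch: your integration by parts to conclude $\bar\partial\sigma=0$ produces a boundary term of size $R^{2\tau+2}$, which does not decay when $\tau\in[-1,0)$; you must first improve the decay rate to $O(r^{-2})$ using the expansion $\square = \tfrac12\Delta_{Euc}+(\text{lower order})$ and the absence of indicial roots in $(-2,0)$, exactly the device used in Proposition \ref{dcyprop}.

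For $b_1(X)=0$ your route is genuinely different from the paper's. The paper argues purely topologically: Mayer--Vietoris applied to $\hat X = X\cup U$, using $b_1(\hat X)=0$ (from $\hat X$ rational or ruled), $b_1(U)=0$ (tree of rational curves), and $b_1(X\cap U)=0$. You instead use only the topology of the end, namely $H^1(S^3/\Gamma;\RR)=0$ and the resulting surjectivity of $H^1_c(X)\to H^1(X)$, and then kill decaying harmonic representatives by the same holomorphic-compactification technique as in Propositions \ref{p2.5} and \ref{p2.6}. What your route buys is independence from the global statement $b_1(\hat X)=0$ (which, as you observe, is delicate precisely when ``ruled'' could mean irrationally ruled, a case the paper's proof implicitly excludes); what it costs is a nontrivial Hodge-theoretic input that the paper's proof never needs.

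That input is where your proof has a genuine gap: the assertion that the image of $H^1_c\to H^1$ is represented by $L^2$-harmonic forms is \emph{not} standard $L^2$-Hodge theory on complete manifolds, and is false at that level of generality. On a complete manifold the Hodge--Kodaira decomposition is $L^2 = \mathcal{H}\oplus\overline{dC^\infty_c}\oplus\overline{d^*C^\infty_c}$, so projecting a compactly supported closed form onto $\mathcal{H}$ changes it by an $L^2$-\emph{limit} of exact forms, which need not be exact; hence the harmonic projection need not represent the original de Rham class. The identification of $\mathcal{H}^1_{L^2}$ with the image of $H^1_c\to H^1$ is a theorem specific to ALE (fibered-boundary) geometry, due to Hausel--Hunsicker--Mazzeo, not a general fact. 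Fortunately you can close the gap entirely within the paper's weighted framework: given a compactly supported closed $1$-form $\alpha$, solve $\Delta u = d^*\alpha$ with $u\in C^{k,\alpha}_{\tau}(X)$ for non-integral $\tau\in(-2,0)$ (this Laplacian is surjective because its cokernel consists of decaying harmonic functions, which vanish by the maximum principle); then $\alpha - du$ is a closed, coclosed, decaying harmonic representative of $[\alpha]$, and your type decomposition together with the vanishing of decaying holomorphic $1$-forms (via $\Omega^1|_C=\oo(-2)\oplus\oo(-1)$) finishes the argument as you intended.
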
 
\begin{proof}
Let $U$ be a tubular neighborhood of $E_\infty$, $\hat{X} = X\cup E_\infty$ be the analytic compactification of $X$. By Proposition \ref{p1.4}, $\hat{X}$ is either a rational surface or a ruled surface, then $b^1(\hat{X})=0$. Since $E_\infty$ is a tree of rational curves, $b^1(U)=0$ and $b^1(X\cap U)=0$. Then by  the Mayer-Vietoris theorem, $b^1(X)=0$.

Since $\dim(\H_\tau(X,\Lambda^{0,2}))=\dim(\H_\tau(X,\Lambda^{2,0}))$, we just need to show the latter one equals to zero. The proof is similar to the proof of Proposition \ref{p2.6}, so we will skip some details. 
For any $\sigma\in \H_\tau(X,\Lambda^{2,0})$, let $\tilde{\sigma}$ be its lifting on $\tilde{X}_\infty$. Then
\begin{align}
\tilde{\sigma} =  a \cdot dz_1\wedge dz_2 + O(|z|^{-3}\tilde{\sigma})
\end{align}
where $a = O(|z|^{\tau})$ as $r\to \infty$ is a $\bar\partial_{Euc}$-holomorphic function. 
Then $a = 0$, and $\tilde{\sigma} = O(|z|^{-3+\tau})$, which implies that $\tilde{\sigma}$ can decay to any order in the ALE coordinates $\{z_1,z_2\}$. Consequently, $\tilde{\sigma}$ can be extended to $C$. Furthermore, 
\begin{align}
\Omega^{2,0}|_C = \Omega^{2,0}(C)\otimes N_C^* = \oo(-3).
\end{align}
This implies that $\tilde{\sigma}$ vanishes on an open neighborhood of $C$, so $\sigma$ vanishes identically on $X$.

\end{proof}

\section{Small deformations of complex structure}
\label{Kuranishi}
First we need a fixed point theorem for operators on Banach spaces (see, for example \cite{BiquardRollin}).
\begin{lemma}
\label{l2.4}
Let $F: \mathcal{B}_1 \rightarrow \mathcal{B}_2$ be a bounded differentiable operator between Banach spaces. 
In a small neighborhood of $0\in \mathcal{B}_1$, $F(x) = F(0)+F'(0)\,x+Q(x)$, where 
\begin{align}
\label{Q}
\Vert Q(x)-Q(x') \Vert _{\mathcal{B}_2} \leq C_0\cdot (\Vert x \Vert_{\mathcal{B}_1} +\Vert x'\Vert_{\mathcal{B}_1})\cdot \Vert x -x'\Vert_{\mathcal{B}_1}.
\end{align}
Assume that $\Vert F(0)\Vert_{\mathcal{B}_2} \ll 1$. Then 

\noindent
(i) If $F'(0)$ is an isomorphism with right inverse $G$ bounded by $C_1$, then there exists a ball $B(0,s) \subset \mathcal{B}_1$ such that there exists a unique $x\in B(0,s)$ with $F(x)=0$.\\
(ii) If $F'(0)$ is Fredholm and surjective, with right inverse bounded by $C_1$, 
then there is an $s > 0$, so that $F^{-1}(0) \cap B(0,s)$ 
is isomorphic to $U \subset \ker(F'(0))$, where $U$ is a small neighborhood of the origin.
\end{lemma}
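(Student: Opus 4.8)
The plan is to reduce both statements to the Banach contraction mapping principle. The first thing I would record is that, since $Q(x) = F(x) - F(0) - F'(0)x$, we have $Q(0) = 0$; taking $x' = 0$ in \eqref{Q} then gives the quadratic bound $\Vert Q(x)\Vert_{\mathcal{B}_2} \le C_0 \Vert x\Vert_{\mathcal{B}_1}^2$, which is what makes the nonlinear term negligible on a small ball. For (i), since $F'(0)$ is an isomorphism its right inverse $G$ is a genuine two-sided inverse, and $F(x) = 0$ is equivalent to the fixed-point equation $x = T(x) := -G\big(F(0) + Q(x)\big)$. I would then check that $T$ maps a ball $B(0,s)$ into itself and is a contraction there: from the quadratic bound, $\Vert T(x)\Vert \le C_1(\Vert F(0)\Vert + C_0 s^2)$, and from \eqref{Q}, $\Vert T(x) - T(x')\Vert \le C_1 C_0(\Vert x\Vert + \Vert x'\Vert)\Vert x - x'\Vert \le 2C_1 C_0 s \Vert x - x'\Vert$. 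Choosing $s < (2C_1C_0)^{-1}$ gives the contraction factor, and the hypothesis $\Vert F(0)\Vert \ll 1$ lets me also arrange $C_1\Vert F(0)\Vert \le s/2$ and $C_1 C_0 s^2 \le s/2$, so that $T(B(0,s)) \subset B(0,s)$; the fixed-point theorem then yields the unique solution.

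For (ii), I would use Fredholmness and surjectivity of $F'(0)$ to split $\mathcal{B}_1 = \ker(F'(0)) \oplus W$, where $W = \mathrm{range}(G)$ and $\Pi := GF'(0)$ is the projection onto $W$ along $\ker(F'(0))$ (here $F'(0)G = \mathrm{Id}$ gives $\Pi G = G$ and $\Pi^2 = \Pi$). Writing $x = \kappa + w$ with $\kappa \in \ker(F'(0))$ and $w \in W$, I would show that $F(x) = 0$ is equivalent to the reduced equation $w = -G\big(F(0) + Q(\kappa + w)\big)$ with $\kappa$ free. For each small fixed $\kappa$ the map $w \mapsto -G(F(0) + Q(\kappa + w))$ is a contraction of a small ball in $W$ by exactly the estimates from (i), producing a unique solution $w = w(\kappa)$, and the assignment $\kappa \mapsto \kappa + w(\kappa)$ is the desired parametrization of $F^{-1}(0) \cap B(0,s)$ by a neighborhood $U \subset \ker(F'(0))$.

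The main obstacle, or rather the only genuinely delicate point, is the equivalence in (ii) between $F(x)=0$ and the reduced fixed-point equation: one must verify that solving only the $W$-component (the image of $G$) really forces the full equation $F'(0)x = -F(0)-Q(x)$ to hold. This relies precisely on surjectivity of $F'(0)$ together with the identity $\Pi = GF'(0)$. Concretely, applying $G$ to $F'(0)x = -F(0)-Q(x)$ and using $\Pi G = G$ and $\Pi x = w$ yields the reduced equation; conversely, since $F'(0)$ is onto, the identity $F'(0)\big[x + G(F(0)+Q(x))\big] = 0$ holds iff $x + G(F(0)+Q(x)) \in \ker(F'(0))$, i.e. iff its $W$-component $w + G(F(0)+Q(x))$ vanishes, which is exactly the reduced equation. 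Beyond this, the continuity of $\kappa \mapsto w(\kappa)$, needed so that the parametrization is a homeomorphism onto $F^{-1}(0)\cap B(0,s)$ and hence gives the claimed isomorphism, follows from the standard fact that the fixed point of a uniform family of contractions depends continuously, indeed as smoothly as $F$, on the parameter $\kappa$.
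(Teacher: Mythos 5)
Your proof is correct, and it is the standard argument: the quadratic bound $\Vert Q(x)\Vert \leq C_0\Vert x\Vert^2$ from \eqref{Q}, the contraction $x \mapsto -G(F(0)+Q(x))$ for part (i), and the Lyapunov--Schmidt-type splitting $\mathcal{B}_1 = \ker(F'(0)) \oplus \mathrm{range}(G)$ via the projection $\Pi = GF'(0)$ for part (ii), including the correct verification (using $F'(0)G = \mathrm{Id}$) that the reduced fixed-point equation is genuinely equivalent to $F(x)=0$. The paper itself gives no proof of this lemma --- it simply cites Biquard--Rollin --- and your contraction-mapping argument is precisely the proof that citation refers to.
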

The following lemma gives the properties of the linearized operator we will 
need in order to invoke Lemma \ref{l2.4}. 
\begin{lemma} Let $(X,g,J)$ be a K\"ahler ALE surface. 
\label{l2.5}
The linear operator 
\begin{align}
P: C^{k,\alpha}_{\delta-1}(X,\Lambda^{0,1}\otimes\Theta)&\xrightarrow{(\bar\partial^*,\bar\partial)} C^{k-1,\alpha}_{\delta-2}(X,\Theta)\oplus C^{k-1,\alpha}_{\delta-2}(X,\Lambda^{0,2}\otimes\Theta)
\end{align}
is surjective and Fredholm, for some $k\geq 3$ and $\delta\in (-2,-1)$.
\end{lemma}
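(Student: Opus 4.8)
The plan is to prove surjectivity and the Fredholm property separately, with the Fredholm property being essentially standard and surjectivity being the substantive point. First I would establish that $P$ is Fredholm between the indicated weighted H\"older spaces. The operator $P = (\bar\partial^*, \bar\partial)$ acting on $\Lambda^{0,1}\otimes\Theta$ is elliptic (it is the analogue of the $\bar\partial$-complex's combined operator, whose symbol is that of a Laplacian on the middle term). On an ALE manifold, such a geometric elliptic operator is Fredholm between weighted spaces $C^{k,\alpha}_{\delta-1} \to C^{k-1,\alpha}_{\delta-2}$ precisely when the weight $\delta - 1$ is not an indicial root; since $\delta \in (-2,-1)$ gives $\delta - 1 \in (-3,-2)$, and the only potentially relevant indicial roots for this bundle over $\RR^4/\Gamma$ occur at integers (the exceptional rates $-2$ and $-3$), the weight avoids them, so $P$ is Fredholm. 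I would invoke the Lockhart--McOwen theory (or the ALE analysis already implicit in the decay results of Section \ref{SDCS}) for this.

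The heart of the matter is surjectivity, which I would prove by showing the cokernel vanishes. Following the remark after the weighted H\"older space definition, the cokernel of $P : C^{k,\alpha}_{\delta-1} \to C^{k-1,\alpha}_{\delta-2} \oplus C^{k-1,\alpha}_{\delta-2}$ is computed via the formal adjoint $P^* = (\bar\partial, \bar\partial^*)$ acting on the dual weight: an element of the cokernel corresponds to a pair $(\eta, \psi) \in \Gamma(\Theta) \oplus \Gamma(\Lambda^{0,2}\otimes\Theta)$ with appropriate decay $O(r^{-4 - (\delta-2)}) = O(r^{-2-\delta})$, satisfying $\bar\partial \eta + \bar\partial^* \psi = 0$ in the weak sense. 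By elliptic regularity these are smooth, and pairing against $P\phi$ for $\phi$ of compact support, then integrating by parts, forces $\bar\partial \eta = 0$ and $\bar\partial^* \psi = 0$ separately, and moreover $\eta$ and $\psi$ are each $\square$-harmonic. The decay condition places $\eta \in \Z_\tau(X,\Theta)$ and $\psi \in \H_\tau(X,\Lambda^{0,2}\otimes\Theta)$ for the relevant negative $\tau$.

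Now I would invoke the vanishing theorems already proved. Since $-2-\delta \in (-1,0)$ is negative, the holomorphic vector field $\eta$ decays, so Proposition \ref{p2.5} (the case $\tau < 0$) gives $\eta = 0$. For $\psi$, since the dual weight lies in $(-2,-1)$, Proposition \ref{p2.6} gives $\H_\delta(X,\Lambda^{0,2}\otimes\Theta) = 0$, so $\psi = 0$. Hence the cokernel is trivial and $P$ is surjective. The main obstacle I anticipate is bookkeeping the weights carefully: verifying that the adjoint weight $-4 - (\delta - 2) = -2 - \delta$ lands in exactly the ranges where Propositions \ref{p2.5} and \ref{p2.6} apply (namely $(-1,0)$ for the vector field part, forcing the decaying case, and the $(-2,-1)$ hypothesis of Proposition \ref{p2.6} for the $(0,2)$-part), and confirming that no indicial root obstructs the Fredholm step for $\delta - 1 \in (-3,-2)$. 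These are routine but must be done with precision, since the entire argument hinges on the dual weights falling into the vanishing regimes.
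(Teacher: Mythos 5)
Your strategy coincides with the paper's own proof: Fredholmness from ellipticity, integral indicial roots, and the non-integral weight via Lockhart--McOwen, then surjectivity by identifying the cokernel with the kernel of the formal adjoint at the dual weight $-2-\delta$ and killing it with Propositions \ref{p2.5} and \ref{p2.6}. The paper does exactly this: from the adjoint equation $\bar\partial\sigma_1=\bar\partial^*\sigma_2$ it deduces $\square\sigma_1=0$, kills $\sigma_1$ by Proposition \ref{p2.5}, and then kills $\sigma_2$ by Proposition \ref{p2.6}.

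However, there is a concrete slip in the step you yourself single out as the crux. Since $\delta\in(-2,-1)$, the dual weight is $-2-\delta\in(-1,0)$, \emph{not} in $(-2,-1)$, so the hypothesis of Proposition \ref{p2.6} is not satisfied directly by $\psi$, contrary to your final paragraph (your treatment of $\eta$ is fine, since Proposition \ref{p2.5} only requires $\tau<0$). The same issue undermines your integration by parts: at decay rate $\tau\in(-1,0)$ the boundary terms behave like $r^{2\tau+2}$ with $2\tau+2\in(0,2)$, which does not tend to zero, so you cannot split $\bar\partial\eta=0$ and $\bar\partial^*\psi=0$ that way at the raw weight. Both problems are repaired by one observation, implicit in the paper's proof and used explicitly elsewhere in the paper (compare Proposition \ref{dcyprop} and the end of the proof of Theorem \ref{kurthm}): the adjoint equation alone forces \emph{both} components to be $\square$-harmonic, purely formally, since $\square\sigma_1=\bar\partial^*\bar\partial\sigma_1=(\bar\partial^*)^2\sigma_2=0$ (here $\bar\partial^*\sigma_1=0$ by degree reasons) and $\square\sigma_2=\bar\partial\bar\partial^*\sigma_2=\bar\partial^2\sigma_1=0$ (here $\bar\partial\sigma_2=0$ automatically in complex dimension two); then a decaying $\square$-harmonic section admits an expansion whose leading term is a decaying Euclidean-harmonic tensor, and since there are no indicial roots in $(-2,0)$ this leading term is $O(r^{-2})$. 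This upgraded decay places $\psi$ within the range where Proposition \ref{p2.6} applies, and it also makes the boundary terms in your integration by parts vanish, so that $\eta$ is genuinely holomorphic and decaying and hence zero. With this inserted, your argument closes and agrees with the paper's.
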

\begin{proof}
It is not hard to see that $P$ is an elliptic operator, 
and that the indicial roots of $P$ are integral. Consequently, by standard
weighted space theory, $P$ is Fredholm since $\delta$ is non-integral 
\cite{LockhartMcOwen}. 

 The cokernel is given by 
\begin{align}
\ker(P^*)=\{(\sigma_1,\sigma_2)\in C^{k-1,\alpha}_{-2-\delta}(\Theta)\oplus C^{k-1,\alpha}_{-2-\delta}(\Lambda^{0,2}\otimes\Theta):\bar\partial^*\sigma_2=\bar\partial\sigma_1\}.
\end{align}
Let $(\sigma_1, \sigma_2) \in \ker(P^*)$, then 
\begin{align}
\bar\partial^*\sigma_1=0,\; \bar\partial^*\bar\partial\sigma_1 = \bar\partial^*\bar\partial^*\sigma_2=0, 
\end{align}
which implies that $\sigma_1$ is $\square$-harmonic. From Proposition \ref{p2.5}, 
$\sigma_1=0$. Then 
\begin{align}
\bar\partial\sigma_2 = 0,\; \bar\partial^*\sigma_2 = 0,
\end{align}
and the vanishing of $\sigma_2$ follows from Proposition \ref{p2.6}.
Since $\ker(P^*)=0$, and $P$ is Fredholm, $P$ is surjective. 
\end{proof}

For a fixed complex structure $J$,
there is a nonlinear correspondence between sufficiently small sections of $\Gamma(X,End_a(TX))$ and almost complex structures near $J$ given by
\begin{align}
\begin{split}
\label{almostcomplex}
E_J: &\Gamma(X,End_a(TX))\rightarrow \mathcal{A}\\
E_J(I) &= \Big(J+\frac{1}{2}I \Big)J \Big(J+\frac{1}{2}I \Big)^{-1}, 
\end{split}
\end{align}
where $\mathcal{A}$ is the space of almost complex structures on $X$.
If $\phi \in \Gamma(\Lambda^{0,1} \otimes \Theta)$ has sufficiently small norm, 
then the corresponding almost complex structure will be denoted by
$J(\phi) = E_J(Re(\phi))$. Note that there is an expansion:
\begin{align}
\label{expansion}
J(\phi) = J + Re(\phi) + Q
\end{align}
where $|Q|= O (|\phi|^2)$, as $|\phi| \rightarrow 0$. 

Next we prove a weighted version of Kuranishi's theorem.
\begin{theorem} 
\label{kurthm}
Let $(X,J_0,g_0)$ be a K\"ahler ALE surface, and
let $B_{\epsilon_1}$ be an $\epsilon_1$-ball around $0$ in $\H_{-3}(X,\Lambda^{0,1}\otimes\Theta)$. Then for $\epsilon_1 > 0$ sufficiently small, there is a differentiable family of complex structures $J_t$ for $t \in B_{\epsilon_1}$ such that 
\begin{align}
J_t = J(\phi(t)), \ \phi(t) \in C^{k,\alpha}_{\delta-1}(X,\Lambda^{0,1}\otimes\Theta), \ \delta\in (-2,-1), \ k\geq 3,
\end{align}
where $\phi(t)$ satisfies 
$\phi(t) = t + \phi(t)^{\perp}$, 
with $\phi(t)^{\perp}$ is $L^2$-orthogonal to $\H_{-3}(X,\Lambda^{0,1}\otimes\Theta)$,
and 
$\bar\partial^* ( \phi(t)) = 0$.
Furthermore, 
there exists an $\epsilon' > 0$, such that if $J = J_0(\phi)$ 
is any integrable complex structure satisfying 
\begin{align}
\label{family_nbhd}
\Vert \phi \Vert_{C^{k,\alpha}_{\delta}} < \epsilon',
\mbox{ \ and \ } 
\bar\partial^* ( \phi) = 0,
\end{align} 
then $\phi$ is in the family $J_t$. Finally, for any $t \in B_{\epsilon_1}$, 
there exists a constant $C$ such that $\Vert \phi(t)\Vert_{C^{k,\alpha}_{-3}}\leq C\cdot\epsilon_1$.
\end{theorem} 
\begin{proof}
Define the operator 
\begin{align}
\begin{split}
F: C^{k,\alpha}_{\delta-1}(X,\Lambda^{0,1}\otimes\Theta)&\rightarrow C^{k-1,\alpha}_{\delta-2}(X,\Theta)\oplus C^{k-1,\alpha}_{\delta-2}(X,\Lambda^{0,2}\otimes\Theta)\\
\phi &\mapsto (\bar{\partial}^*\phi, \bar{\partial}\phi+[\phi,\phi]),
\end{split}
\end{align}
where $[ \phi, \phi]$ is a globally defined operator, 
which can be expressed locally as 
\begin{align}
[ \phi, \phi] &= \frac{1}{2}\Big[ \sum_{i,j} \phi_{i,j} d\bar{z}_i\otimes\frac{\partial}{\partial z_j}, \sum_{k,l} \phi_{k,l}d\bar{z}_k\otimes\frac{\partial}{\partial z_l}\Big] \\
&= \sum_{i,j,k,l} \phi_{i,j}\frac{\partial\phi_{k,l}}{\partial z_j} d\bar{z}_i\wedge d\bar{z}_k\otimes\frac{\partial}{\partial z_l}.
\end{align}
Formally, $[\phi,\phi]$ can be written as $\phi*\nabla\phi$, where $*$ means a linear combination of quadratic terms, each involving some contraction of $\phi$ with $\nabla\phi$.

Clearly, $F$ is a bounded differentiable mapping, and 
$F'(0) = (\bar{\partial}^*,\bar{\partial})$. Obviously,
in a small neighborhood of $0$ in 
$C^{k,\alpha}_{\delta-1}(X,\Lambda^{0,1}\otimes\Theta)$, $F$ admits an expansion 
$F = F(0)+F'(0)+Q$. We next prove the estimate \eqref{Q}.
Let $r$ denote the radius. For any two elements $\phi,\phi'\in C^{k,\alpha}_{\delta-1}(\Lambda^{0,1}\otimes \Theta)$, we have
\begin{align}
\begin{split}
r^{2-\delta} |[\phi,\phi]-[\phi',\phi']| &= r^{2-\delta}|[\phi*\nabla\phi]-[\phi'*\nabla\phi']|\\
&= r^{2-\delta}|\phi*(\nabla\phi-\nabla\phi')-(\phi-\phi')*\nabla\phi'|\\
&\leq C r^{\delta-1}\{r^{1-\delta} |\phi|r^{2-\delta}|\nabla\phi-\nabla\phi'|+ r^{1-\delta}|\phi-\phi'|r^{2-\delta}|\nabla\phi'|\}\\
& \leq C (r^{1-\delta} |\phi|) (r^{2-\delta} |\nabla(\phi-\phi')|)
+ (r^{2-\delta} |\nabla \phi'|) (r^{1-\delta} | \phi - \phi'|).
\end{split}
\end{align}
Next, let $x\neq y \in X$, with $r(x)<r(y)$. Similarly, we have
\begin{align}
\begin{split}
&r(x)^{2-\delta} \frac{|([\phi,\phi]-[\phi',\phi'])(x)-([\phi,\phi]-[\phi',\phi'])(y)|}{d(x,y)^\alpha} \\
&\leq r(x)^{\delta-1} \Big\{ r(x)^{1-\delta}\frac{|\phi(x)-\phi(y)|}{d(x,y)^\alpha} |\nabla\phi(x)-\nabla\phi'(x)| r(x)^{2-\delta}\Big\} +\\
& r(y)^{\delta-1} \Big\{ r(x)^{2-\delta}\frac{|(\nabla\phi(x)-\nabla\phi'(x))-(\nabla\phi(y)-\nabla\phi'(y))|}{d(x,y)^\alpha} |\phi(y)| r(y)^{1-\delta}\Big\} + \\
& r(x)^{\delta-1} \Big\{ r(x)^{1-\delta} \frac{|(\phi(x)-\phi'(x))-(\phi(y)-\phi'(y))|}{d(x,y)^\alpha} |\nabla\phi'(x)| r(x)^{2-\delta} \Big\}+\\
& r(y)^{\delta-1} \Big\{ r(x)^{2-\delta}\frac{|\nabla\phi'(x)-\nabla\phi'(y)|}{d(x,y)^\alpha} |\phi(y)-\phi'(y)| r(y)^{1-\delta} \Big\}.
\end{split}
\end{align}
This shows that
\begin{align}
\Vert [\phi,\phi]-[\phi',\phi'] \Vert_{C^{0,\alpha}_{\delta-2}}\leq C_0 (\Vert \phi\Vert_{C^{1,\alpha}_{\delta-1}}+\Vert \phi' \Vert_{C^{1,\alpha}_{\delta-1}})\Vert \phi-\phi'\Vert_{C^{1,\alpha}_{\delta-1}}.
\end{align}
The higher derivative terms can be handled similarly, to prove
\begin{align}
\Vert [\phi,\phi]-[\phi',\phi'] \Vert_{C^{j-1,\alpha}_{\delta-2}}\leq C_j (\Vert \phi\Vert_{C^{j,\alpha}_{\delta-1}}+\Vert \phi' \Vert_{C^{j,\alpha}_{\delta-1}})\Vert \phi-\phi'\Vert_{C^{j,\alpha}_{\delta-1}} ,\; 1\leq j\leq k
\end{align}
which shows that $Q$ satisfies \eqref{Q}. 

By Lemma $\ref{l2.5}$, we know $P = F'(0)$ is Fredholm. We can choose a right inverse operator $G$ such that the image of $G$ is $L^2$-orthogonal to $\H_{-3}(X,\Lambda^{0,1}\otimes\Theta)$. By Lemma \ref{l2.4} there exists $s = \epsilon_1 > 0$, such that $F^{-1}(0)$ is locally isomorphic to an $\epsilon_1$-neighborhood of $0$ in 
$\H_{\delta-1}(X,\Lambda^{0,1}\otimes\Theta)$, with the property that for each $t\in B_{\epsilon_1}$, there is a $\phi(t) = t+\phi(t)^{\perp}\in F^{-1}(0)$, where $\phi(t)^{\perp}$ is $L^2$-orthogonal to $\H_{-3}(X,\Lambda^{0,1}\otimes\Theta)$, and $J_0(\phi(t))$ is the corresponding complex structure. 
It is a straightforward consequence of the implicit function theorem that the mapping $\psi: t \mapsto \phi(t)$ is differentiable.  This finishes the proof of the existence of the family of complex structures.

By the fixed point Lemma \ref{l2.4}, near $0$, the zero set of $(\bar\partial^*,\bar\partial + [\,,\,])$ is locally bijective to the kernel of $(\bar\partial^*,\bar\partial)$, which is $\H_{-3}(X,\Lambda^{0,1}\otimes\Theta)$. Thus there exists an $\epsilon''>0$ such that for any $\Vert \phi\Vert_{C^{k,\alpha}_{\delta-1}}<\epsilon''$, $\bar\partial^*\phi=0$, $\bar\partial\phi+[\phi,\phi]=0$, then $J = J_0(\phi)$ is in the family we just constructed. 

Next we will show that $\phi(t)\in C^{k,\alpha}_{-3}(X,\Lambda^{0,1}\otimes\Theta)$ for $t\in B_{\epsilon_1}$. 
In fact, we will show that 
for any $\phi\in C^{k,\alpha}_{\delta}(X,\Lambda^{0,1}\otimes\Theta)$ that satisfies the system
\begin{align}
\label{condition}
\bar\partial\phi = -[\phi,\phi],\; \bar\partial^*\phi = 0,
\end{align}
then $\phi\in C^{k,\alpha}_{-3}(X,\Lambda^{0,1}\otimes\Theta)$. To see this, \eqref{condition} implies that 
\begin{align}
\label{higherOrders}
\|\square\phi\|_{C^{k-2,\alpha}_{2\delta-2}} = \|-\bar\partial^*[\phi,\phi]\|_{C^{k-2,\alpha}_{2\delta-2}}\leq C\cdot 
\|\phi\|^2_{C^{k,\alpha}_\delta}
\end{align}
Then outside of a compact subset,
\begin{align}
\label{lap}
\frac{1}{2}\Delta_{Euc}\phi = -\square\phi+(\frac{1}{2}\Delta_{Euc}+\square)\phi = O(r^{2\delta-2}),
\end{align}
which implies that 
\begin{align}
\phi = \sum_{i,j}\frac{a_{i,j}}{r^2}d\bar{z_i}\otimes\frac{\partial}{\partial z_j}+O(r^{-3+\epsilon}),
\end{align}
 where $a_{i,j}$ are constants and $\frac{a_{i,j}}{r^2}$ is $\Delta_{Euc}$-harmonic. 
By the same argument as in the proof of \eqref{aa1}, we have $a_{i,j}=0$. Then a 
similar argument shows that $\phi\in C^{k,\alpha}_{-3}(X,\Lambda^{0,1}\otimes\Theta)$. 

Next we will show that, if $\phi\in C^{k,\alpha}_{\delta}(X,\Lambda^{0,1}\otimes\Theta)$, $\|\phi\|_{C^{k,\alpha}_{\delta}}<\epsilon'$,
and satisfies the system \eqref{condition}, then $\|\phi\|_{C^{k,\alpha}_{\delta-1}}\leq C\cdot \epsilon'$ for some constant $C>0$.
By the argument above, $\phi\in C^{k,\alpha}_{-3}(X,\Lambda^{0,1}\otimes\Theta)$, and $\phi = \phi^h + \phi^{\perp}$, 
where $\phi^h\in \H_{-3}(X,\Lambda^{0,1}\otimes\Theta)$, and $\phi^h$ is $L^2$-orthognal to $\phi^{\perp}$. 
By an argument similar to Corollary 1.16 in \cite{Bartnik}, $\|\phi^{\perp}\|_{C^{k,\alpha}_{\delta-1}}\leq C\cdot \|\square\phi\|_{C^{k-2,\alpha}_{\delta-3}}$. 
Since $\phi^h$ is in a finite dimensional space, combined with the estimate \eqref{higherOrders},
we can make $\epsilon'$ small enough such that $\|\phi\|_{C^{k,\alpha}_{\delta-1}}<\epsilon''$, and then $\phi$ is in
the family $J_t$ as shown above.
The estimate of $\|\phi\|_{C^{k,\alpha}_{-3}}$ follows from finite-dimensionality of the kernel. 
\end{proof}

\section{Versality of Kuranishi family}
\label{vkf}
The next result shows that any nearby complex structure can 
be brought into the family $J_t$ by a suitable diffeomorphism. 
Of course, the implicit function theorem requires a mapping between 
Banach spaces to be differentiable. The following lemma, inspired by \cite{Biquard06}, will be used below to this end. 
\begin{lemma}
\label{smooth}
Let $(X,g_0,J_0)$ be a K\"ahler ALE surface with $g_0,J_0\in C^{\infty}$. 
Then for $\delta'<-1$, for $g_0, J_0$ with ALE asymptotic rate $O(r^{\delta'})$,
there exists an $\epsilon_1>0$, such that for an $\epsilon_1$-ball $B_{\epsilon_1}\subset C^{k+1,\alpha}_{\delta'+1}(TX)$, the maps 
\begin{align}
B_{\epsilon_1}&\rightarrow C^{k,\alpha}_{\delta'}(S^2(T^*X)),\;
Y \mapsto {\Phi_Y}_* g_0-g_0\\
B_{\epsilon_1}&\rightarrow C^{k,\alpha}_{\delta'}(End(TX)),\;
Y \mapsto {\Phi_Y}_* J_0-J_0
\end{align}
are smooth. 
\end{lemma}
\begin{proof} 
Let $U\subset X$ be a normal coordinate chart with respect to $g_0$,
and let $U', U''$ be smaller charts, such that the $\epsilon_1$-neighborhood of $U'$ is in $U$, and the $\epsilon_1$-neighborhood of $U''$ is in $U'$.
The tangent bundle $TU$ is isomorphic to $U\times \RR^4$, with coordinate functions $(x,v) = (x_1,\ldots,x_4,v_1,\ldots v_4)$.
On $U'$, we have the Riemannian exponential map:
\begin{align}
\begin{split}
Exp: U'\times B_{\epsilon_1}\subset TU'&\rightarrow U\\
(x,v)&\rightarrow \gamma(x,v)
\end{split}
\end{align}
where $\gamma(x,v)$ is a geodesic with initial position and derivative $(x,v)$.
The $j^{\text{th}}$-coordinate function of $Exp$ $(1\leq j\leq 4)$ 
is a smooth function over $(x,v)$, and has an expansion
\begin{align}
\label{Expexpansion}
Exp_j(x,v) = x_j + \sum_{k=1}^4 f_k(x)v_k + Q(x,v)
\end{align}
where $|Q(x,v)|\leq C(U,g_0)\cdot |v|^2$. 
Let $\xi\in C^{k+1,\alpha}(TU)$, and $\|\xi\|_{C^{k+1,\alpha}}<\epsilon_1$.
The geodesic flow $\Phi_{\xi}$ is defined as
\begin{align} 
\begin{split}
\Phi_{\xi}: U'&\rightarrow U\\
x &\rightarrow Exp(x,\xi(x))
\end{split}
\end{align} 
Then $\Phi_{\xi}(x) = Exp(x,\xi(x))$ is $C^{k+1,\alpha}$ over $x$, since $Exp$ is smooth and $\xi(x)$ is $C^{k+1,\alpha}$.
Let $\xi'$ be another vector field on $U$ with $\|\xi'\|_{C^{k+1,\alpha}}<\epsilon_1$.
Then by the expansion above, for each $1\leq j\leq 4$, we have:
\begin{align}
\label{Flowexpansion}
\|({\Phi_{\xi}})_j - ({\Phi_{\xi'}})_j - \sum_{k=1}^4 f_k\cdot (\xi_k-\xi'_k)\|_{C^{k+1,\alpha}(U'')}
\leq C\cdot \|\xi-\xi'\|^2_{C^{k+1,\alpha}(U)}
\end{align}
This implies that each coordinate function of $\Phi$ is Fr\'echet differentiable as a map from $B_{\epsilon_1}\subset C^{k+1,\alpha}(TU)$ to 
$C^{k+1,\alpha}(U'')$.

Since ${\Phi_\xi}_*g_0$ is defined as $(\Phi_{\xi}^{-1})^*g_0$, without the loss of generality, we will do the analysis on $\Phi^*_\xi g_0-g_0$.
On the chart $U''$, we have 
$\Phi^*_\xi g_0(x) (\partial_i,\partial_j) = g_0(\Phi_\xi(x))({\Phi_\xi}_*\partial_i, {\Phi_\xi}_*\partial_j)$. 
Since $g_0$ is smooth over $x$, and ${\Phi_\xi}_*\partial_j = \partial_j{\Phi_\xi}$ is $C^{k,\alpha}$ over $x$, by using the argument above, 
we can see that 
$\Phi^*g_0-g_0: B_{\epsilon_1}\subset C^{k+1,\alpha}(TU)\rightarrow C^{k,\alpha}(S^2(T^*U''))$ is Fr\'echet differentiable. 

For some large $R>0$, denote $A_{R,2R}$ as an annulus in the ALE space $X$. By covering $A_{R,2R}$ with finite many normal coordinate charts,
using the argument above for each chart, we have the map 
$\Phi^*g_0: B_{\epsilon_1}\subset C^{k+1,\alpha}(TA_{R,2R})\rightarrow C^{k,\alpha}(S^2(T^*A_{R-\epsilon_1,2R-2\epsilon_1}))$
is Fr\'echet differentiable.
By a standard dilation argument, which dilates $A_{R,2R}$ to $A_{2^kR,2^{k+1}R}$, we have
$\Phi^*g_0 - g_0: B_{\epsilon_1}\subset C^{k+1,\alpha}_{\delta'+1}(TA_{2^kR,2^{k+1}R})\rightarrow C^{k,\alpha}_{\delta'}(S^2(T^*A_{2^k(R-\epsilon_1),2^{k+1}(R-\epsilon_1)}))$
is Fr\'echet differentiable. The constant $C(U,g_0)$ in each chart $U$ can be chosen uniformly bounded on $X$. This implies that
$\Phi^* g_0 - g_0: B_{\epsilon_1}\subset C^{k+1,\alpha}_{\delta'+1}(TX)\rightarrow C^{k,\alpha}_{\delta'}(S^2(T^*X))$ is Fr\'echet differentiable. Furthermore, it follows that
for $Y,Y'\in B_{\epsilon_1}$, there exists a constant $C$,
\begin{align}
\|\Phi_{Y}^*g_0 - \Phi_{Y'}^*g_0 - \mathfrak{L}_{Y-Y'}g_0\|_{C^{k,\alpha}_{\delta'}} \leq C\cdot \|Y-Y'\|_{C^{k+1,\alpha}_{\delta'+1}}\cdot (\|Y\|_{C^{k+1,\alpha}_{\delta'+1}} +\|Y'\|_{C^{k+1,\alpha}_{\delta'+1}}).
\end{align}
By using higher order expansions similar to \eqref{Expexpansion}, we can furthermore show that $\Phi^*g_0 - g_0$ is indeed a smooth map. The proof for $J_0$ can proceed in a similar fashion. Specifically we have
\begin{align}
\label{quadratic}
\|\Phi_{Y}^*J_0 - \Phi_{Y'}^*J_0 - \mathfrak{L}_{Y-Y'}J_0\|_{C^{k,\alpha}_{\delta'}} \leq C\cdot \|Y-Y'\|_{C^{k+1,\alpha}_{\delta'+1}}
\cdot( \|Y\|_{C^{k+1,\alpha}_{\delta'+1}} + \|Y'\|_{C^{k+1,\alpha}_{\delta'+1}}).
\end{align}
\end{proof}

Let $\nabla^* : \Gamma( End_{a}(TX) ) \rightarrow \Gamma(TX)$ be 
the adjoint operator of $\frac{-1}{2}J\circ \mathfrak{L}_*J$, which is defined as: 
\begin{align}
(\nabla^* A)^{k} = - \sum_{l,j} g^{lj} \nabla_l A^k_j. 
\end{align}
Note that under the identification of $End_a(TX)$ with $Re(\Lambda^{0,1} \otimes \Theta)$
and $TX$ with $Re(\Theta)$, this corresponds to the operator $\overline{\partial}^*$. 
In the following proofs, for the convenience of the notation, we will denote each element in $\Gamma(End_a(TX))$ as $Re(\phi)$, where 
$\phi\in \Gamma(\Lambda^{0,1}\otimes\Theta)$.

\begin{lemma}
\label{smoothop}
Let $(X,J_0,g_0)$ be a K\"ahler ALE surface with $J_0,g_0\in C^\infty$. Let $\delta'<-1, k\geq 3, 0<\alpha<1$.
Then the following map $\mathfrak{P}$ is smooth in an open neighborhood of $(0,0,0)$
\begin{align}
\label{smoothBanach}
\begin{split}
\mathfrak{P}: C^{k,\alpha}_{\delta'}(S^2(TX))\times C^{k,\alpha}_{\delta'}(End(TX))\times C^{k,\alpha}_{\delta'}(End_a(TX)) &\rightarrow 
C^{k-1,\alpha}_{\delta'-1}(TX) \\
(h, w, Re(\phi))&\mapsto \nabla^*_{g_0+h}(E^{-1}_{J_0+w}(J_0(\phi)))
\end{split}
\end{align}
\end{lemma}
\begin{proof}
The divergence operator has the expansion formula
\begin{align}
\nabla_{g_0+h} = \nabla_{g_0} + (g_0+h)^{-1}*\nabla_{g_0} h, 
\end{align}
\cite[Formula 3.39]{GurskyViaclovsky16}, 
where $*$ means a linear combination of tensor contractions, and $(g_0+h)^{-1}$ is analytic for smooth $h$.
By the expansions \eqref{expansion}, 
\begin{align}
\label{expansionE}
E^{-1}_{J_0+w}(J_0(\phi)) = -w + Re(\phi)+Q'
\end{align}
where $Q'$ represents the higher order terms, which is analytic for small $w$.
Note that $\mathfrak{P}(0,0,0) = 0$. 
For $(h,w,Re(\phi)), (h',w',Re(\phi'))$ of sufficiently small norm, by the expansion above, there exists a constant $C$ so that 
\begin{align}
\begin{split}
\|\mathfrak{P}(h,w,Re(\phi))- \mathfrak{P}(h',w',Re(\phi'))
+ \nabla^*_{g_0}(w-w') - \nabla^*_{g_0}Re(\phi-\phi') \|_{C^{k-1,\alpha}_{\delta'-1}} \leq \\
C\cdot (\|h\|_{C^{k,\alpha}_{\delta'}}+ \|h'\|_{C^{k,\alpha}_{\delta'}}+
\|w\|_{C^{k,\alpha}_{\delta'}}+  \|w'\|_{C^{k,\alpha}_{\delta'}} + 
\|Re(\phi)\|^2_{C^{k,\alpha}_{\delta'}} + \|Re(\phi')\|^2_{C^{k,\alpha}_{\delta'}} ) \\
\cdot(\|h-h'\|_{C^{k,\alpha}_{\delta'}}+ \|w-w'\|_{C^{k,\alpha}_{\delta'}} + \|Re(\phi-\phi')\|^2_{C^{k,\alpha}_{\delta'}})
\end{split}
\end{align}
This implies $\mathfrak{P}$ is differentiable, and satisfies the condition of Lemma \ref{l2.4}. 
By higher order expansions of $\mathfrak{P}$ over $(h,w,Re(\phi))$, we can furthermore
show that $\mathfrak{P}$ is smooth.
\end{proof}

\begin{lemma}
\label{dfree}
Let $(X,J_0,g_0)$ be a K\"ahler ALE surface with $J_0,g_0 \in C^{\infty}$. There exists an $\epsilon_1'>0$ such that for any complex structure $\Vert J_1-J_0\Vert_{C^{k,\alpha}_{\delta}}<\epsilon_1'$, where $k\geq 3, \alpha\in (0,1),\delta\in (-2,-1)$, there exists a unique diffeomorphism $\Phi$,
of the form $\Phi_Y$ for $Y \in C^{k+1,\alpha}_{\delta+1}(TX)$
such that 
$\Phi^*(J_1)$ is in the family $J_t$ from Theorem \ref{kurthm}.
\end{lemma}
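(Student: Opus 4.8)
The plan is to solve the gauge-fixing equation $\delta_0(\Phi_Y^* J_1) = 0$ for the vector field $Y$ by means of the fixed-point Lemma \ref{l2.4}, treating $Y$ as the unknown and $J_1$ as a parameter close to $J_0$. Writing $\phi_Y := \Phi_Y^* J_1 - J_0$ for the resulting deformation tensor (whose $J_0$-antilinear part is a section of $\Lambda^{0,1}\otimes\Theta$), and interpreting $\delta_0$ as the Coulomb gauge operator $\bar\partial_0^*$ relative to $J_0$, I would introduce
\[
F : C^{k+1,\alpha}_{\delta+1}(TX) \longrightarrow C^{k-1,\alpha}_{\delta-1}(\Theta), \qquad F(Y) = \bar\partial_0^*(\phi_Y),
\]
whose small zeros are exactly the vector fields realizing the slice condition. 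The goal is then to produce a unique small zero of $F$.

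First I would check that $F$ is a smooth map between these weighted Banach spaces and that $F(0)$ is small. Smoothness is the exact analogue for complex structures of Lemma \ref{smooth}: the harmonic-coordinate Taylor expansion together with the dilation argument used there for $Y \mapsto \Phi_Y^* g - g$ applies verbatim to $Y \mapsto \Phi_Y^* J_1 - J_0$, so $Y \mapsto \phi_Y$ is smooth, and post-composing with the fixed bounded operator $\bar\partial_0^*$ preserves this. Since $\phi_0 = J_1 - J_0$, the value $F(0) = \bar\partial_0^*(J_1-J_0)$ satisfies $\Vert F(0)\Vert \leq C\,\Vert J_1 - J_0\Vert_{C^{k,\alpha}_\delta} < C\,\epsilon_1'$, so the hypothesis $\Vert F(0)\Vert \ll 1$ of Lemma \ref{l2.4} holds once $\epsilon_1'$ is small.

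The crux is identifying and inverting $F'(0)$. Since $\frac{d}{dt}\big|_{0}\Phi_{tY}^* J = \mathfrak{L}_Y J$, and the $J_0$-antilinear part of $\mathfrak{L}_Y J_0$ is $\bar\partial_0(Y^{1,0})$ under the identification $TX \cong \Theta$, $Y \mapsto Y^{1,0}$, the linearization at $(Y,J_1)=(0,J_0)$ is exactly $\square_0 = \bar\partial_0^*\bar\partial_0$ on $\Gamma(\Theta)$; for $J_1$ merely $C^{k,\alpha}_\delta$-close to $J_0$ the operator $F'(0)$ differs from $\square_0$ by a term of operator norm $O(\epsilon_1')$, hence stays invertible with uniformly bounded inverse provided $\square_0$ is. So it suffices to prove that
\[
\square_0 : C^{k+1,\alpha}_{\delta+1}(\Theta) \longrightarrow C^{k-1,\alpha}_{\delta-1}(\Theta)
\]
is an isomorphism. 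Fredholmness is standard weighted-space theory, since $\delta+1 \in (-1,0)$ is non-integral. For surjectivity I compute the cokernel via the adjoint, obtaining a $\square_0$-harmonic section of $\Theta$ of weight $-4-(\delta-1) = -3-\delta \in (-2,-1)$; at this rate integration by parts has a vanishing boundary contribution, so $\bar\partial_0 Y = 0$ and the cokernel injects into the space of decaying holomorphic vector fields, which vanishes by Proposition \ref{p2.5}. For injectivity I argue that a $\square_0$-harmonic $Y = O(r^{\delta+1})$ with $\delta+1 \in (-1,0)$ admits an asymptotic expansion whose leading term sits at an indicial root below $\delta+1$; since there is no indicial root in $(-2,0)$—the gap between the degree-$0$ and degree-$(-2)$ harmonics exploited in Propositions \ref{dcyprop} and \ref{p2.5}—one in fact gets $Y = O(r^{-2})$, at which weight integration by parts is valid and again forces $\bar\partial_0 Y = 0$, whence $Y=0$ by Proposition \ref{p2.5}.

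With $F'(0)$ invertible and $F(0)$ small, Lemma \ref{l2.4}(i) produces a unique small $Y \in C^{k+1,\alpha}_{\delta+1}(TX)$ with $F(Y)=0$, giving the existence and uniqueness of $\Phi = \Phi_Y$ with $\delta_0(\Phi^* J_1)=0$. Finally, since $J_1$ is integrable and $\Phi_Y$ is a diffeomorphism, $\Phi^* J_1 = J_0 + \phi$ is an integrable complex structure with $\phi$ small and $\bar\partial_0^*\phi = 0$; the decay improvement intrinsic to the slice (as established in Theorem \ref{kurthm}) places $\phi$, hence $\Phi^*(J_1)$, in the Kuranishi family $J_t$. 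I expect the main obstacle to be precisely the injectivity of $\square_0$ at the slowly-decaying weight $\delta+1 \in (-1,0)$: the integration-by-parts identity picks up a non-vanishing boundary term there, so one genuinely needs the indicial-root gap to upgrade the decay to $r^{-2}$ before reducing to the vanishing of decaying holomorphic vector fields.
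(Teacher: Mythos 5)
Your linear analysis is correct and matches the paper: the linearization at the origin is $Y \mapsto \delta_0\mathfrak{L}_Y J_0 = \square_0 Y$ from $C^{k+1,\alpha}_{\delta+1}$ to $C^{k-1,\alpha}_{\delta-1}$, its cokernel is identified via the adjoint at weight $-3-\delta \in (-2,-1)$ with decaying harmonic (hence holomorphic) fields, which vanish by Proposition \ref{p2.5}, and injectivity uses exactly the indicial-root gap in $(-2,0)$ to upgrade the decay before integrating by parts. All of that survives.

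The genuine gap is in your nonlinear setup. You define $F(Y) = \bar\partial_0^*(\Phi_Y^* J_1 - J_0)$ and claim that smoothness of $Y \mapsto \Phi_Y^* J_1 - J_0$ follows ``verbatim'' from Lemma \ref{smooth}. It does not, and this is precisely the point the paper flags: the map $Y \mapsto \delta_{g_0}\bigl(\Phi_Y^*(J_0+\phi)\bigr)$ is \emph{not} differentiable between the relevant Banach spaces. The reason is loss of derivatives: $J_1$ is only assumed $C^{k,\alpha}_{\delta}$, so the formal derivative of your $F$ at $0$ in direction $Z$ is $\bar\partial_0^*(\mathfrak{L}_Z J_1)$, which involves two derivatives of $J_1$ and therefore lands only in $C^{k-2,\alpha}_{\delta-2}$, not in your target $C^{k-1,\alpha}_{\delta-1}$; the difference quotients cannot converge in the stronger norm. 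Lemma \ref{smooth} avoids this only because the object being moved there is the metric $g_0$, which is assumed $C^{\infty}$ (and nearby metrics are assumed smooth), so differentiating in $Y$ never exhausts the regularity of the tensor being pulled back. The paper's fix (Biquard's trick) is to dualize the gauge condition: since $\Phi_Y$ is a diffeomorphism,
\begin{align}
\delta_{{\Phi_Y}_*g_0}(J_0+\phi) = 0 \quad \Longleftrightarrow \quad \delta_{g_0}\bigl(\Phi_Y^*(J_0+\phi)\bigr) = 0,
\end{align}
so one instead solves $\mathfrak{N}(Y,\phi) = \delta_{{\Phi_Y}_*g_0}(J_0+\phi) = 0$, in which the finite-regularity tensor $J_0+\phi$ stays fixed and only the smooth metric is moved; the dependence on $Y$ then enters through the coefficients of the divergence operator, which depend smoothly on ${\Phi_Y}_*g_0$ by Lemma \ref{smooth}, and only one derivative ever hits $J_0 + \phi$. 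Without this reformulation, the fixed-point Lemma \ref{l2.4} cannot be invoked, since its hypotheses require a differentiable map with the quadratic estimate \eqref{Q}; so the missing idea is not a technicality but the key mechanism that makes the implicit-function-theorem argument run.
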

\begin{proof}
Let $J_1 = J_0(\phi)$,
where $\Vert \phi\Vert_{C^{k,\alpha}_{\delta}}<\epsilon_1'$ for some small $\epsilon_1'$ to be determined later, and $\bar\partial\phi+[\phi,\phi]=0$. 
Define $\phi_Y \in \Gamma(\Lambda^{0,1} \otimes \Theta)$ by 
$\Phi_Y^* J_0(\phi) = E_{J_0}(Re(\phi_Y))$. 
Define the operator $\mathfrak{N}$ by choosing the first parameter of $\mathfrak{P}$ to be $(\Phi_Y)_*g_0-g_0$, choosing
the second parameter to be $(\Phi_Y)_*J_0-J_0$, and letting $\delta' = \delta$ as the following
\begin{align}
\begin{split}
C^{k+1,\alpha}_{\delta+1}(TX)\times C^{k,\alpha}_{\delta}(X,End_a(TX)) &\rightarrow C^{k-1,\alpha}_{\delta-1}(TX)\\
(Y,Re(\phi)) &\mapsto \nabla^*_{{\Phi_Y}_*g_0}(E^{-1}_{{\Phi_Y}_*J_0}(J_0(\phi))),
\end{split}
\end{align}
where $\Phi_Y$ is defined in \eqref{phidef}, 
and $\nabla^*_{{\Phi_Y}_* g_0}$ is the divergence operator associated to metric ${\Phi_Y}_*g_0$. 
Note that 
\begin{align}
\label{trick}
\nabla^*_{{\Phi_Y}_*g_0}(E^{-1}_{{\Phi_Y}_*J_0}(J_0(\phi))) = 0  \Leftrightarrow \nabla^*_{g_0} E_{J_0}^{-1}(\Phi_Y^*(J_0(\phi)))) = 0 
\Leftrightarrow \nabla^*_{g_0} Re(\phi_Y) = 0,
\end{align}
so a zero of $\mathcal{N}$ satisfies the desired gauge condition. 
(The latter map has a regularity issue which is why we consider the former, see \cite{Biquard06}.) 
Since $\mathfrak{P}$ is smooth by Lemma \ref{smoothop},  
and since the mappings $Y \mapsto (\Phi_Y)_*g_0 - g_0$, and $Y \mapsto (\Phi_Y)_*J_0-J_0$ are smooth by Lemma \ref{smooth}, the composition
$\mathfrak{N}$ is smooth in an open neighborhood of $(0,0)$. 

Consequently, at $(0,0)$, the linearization of $\mathfrak{N}$ restricted to the tangent space of the first variable is 
\begin{align}
\begin{split}
& D\,\mathfrak{N}_0: C^{k+1,\alpha}_{\delta+1}(TX) \rightarrow C^{k-1,\alpha}_{\delta-1}(TX) \\
&Y \mapsto \nabla^*_{g_0}{\circ \frac{-1}{2}J\circ}\mathfrak{L}_YJ_0 =  \square_{g_0} Y, 
\end{split}
\end{align}
where the adjoint operator of $D\,\mathfrak{N}_0$ is:
\begin{align}
\begin{split}
(D\,\mathfrak{N}_0)^*: C^{k+1,\alpha}_{-3-\delta}(TX)&\rightarrow C^{k-1,\alpha}_{-5-\delta}(TX) \\
\eta &\mapsto \square_{g_0} \eta, 
\end{split}
\end{align}
The kernel of $(D\,\mathfrak{N}_0)^*$ consists of  those $\eta\in C^{k+1,\alpha}_{-3-\delta}(TX)$ such that $\square_{g_0} \eta=0$, which implies that 
$\eta$ is the real part of a holomorphic vector field. 
From Proposition \ref{p2.5}, $\eta=0$. Since the cokernel of $D\,\mathfrak{N}_0$ is trivial, $D\,\mathfrak{N}_0$ is surjective. Note also that the kernel of $D\,\mathfrak{N}_0$ is trivial. 

Of course, in a small neighborhood of $(0,0)$ in $C^{k+1,\alpha}_{\delta+1}(TX)\times C^{k,\alpha}_\delta(X,End_a(TX))$, $D\,\mathfrak{N}$ is surjective.
Since $\mathfrak{N}$ is smooth, condition \eqref{Q} is satisfied, so by Lemma \ref{l2.4}, there exists a unique diffeomorphism $\Phi_Y$ such that 
$Re(\phi_Y)$ is divergence-free.
Since {$\phi_Y$} satisfies \eqref{family_nbhd}, by Theorem~\ref{kurthm}, $\Phi^*_Y(J_1)$ is in the family~$J_t$.
\end{proof}

\section{Essential deformations of complex structure}
\label{esssec}
The arguments in this section are inspired by \cite[Theorem 3.1]{ct}, with 
some appropriate modifications for the smoothness arguments.
Recall that the space $\mathbb{W}$ is defined as
\begin{align}
\mathbb{W} = \{ Z \in \mathcal{H}_1(X, TX) \ | \ \mathfrak{L}_Z g_0  = O (r^{-1}),
\ \mathfrak{L}_Z J_0 = O(r^{-3}), \mbox{ as } r \rightarrow \infty\}.
\end{align}
Clearly, $\WW$ is 
finite-dimensional. For $Z \in \mathbb{W}$, we define $\Upsilon_Z : X \rightarrow X$ 
to be time-one flow for the one-parameter group of diffeomorphisms 
generated by $Z$. Note that since $\mathbb{W}$ is finite-dimensional the use of the one-parameter group here will not have the loss-of-regularity problem as in infinite dimensions. Furthermore, the time-one flow for vector fields in $\mathbb{W}$ has the following nice property.  
\begin{lemma}  
\label{Zlem}
Fix an ALE coordinate under which $|J_0-J_{Euc}| = O(r^{-3})$.
For $Z \in \WW$ sufficiently small, the  
time-one flow for the one-parameter group of diffeomorphisms generated by $Z$
exists. Furthermore, there exist a matrix $A \in {\rm{U}}(2)$ and vector 
$\hat{\epsilon}_0 \in \CC^2$, such that in ALE coordinates of order $\mu$ with 
$-2 < -\mu < -1$, 
\begin{align}
d( \Upsilon_Z(x), Ax + \hat{\epsilon}_0) = O(r^{-\mu}),
\end{align}
as $r \rightarrow \infty$, for any $\epsilon > 0$. 
\end{lemma}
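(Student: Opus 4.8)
The plan is to extract the affine leading part of $Z$ from a weighted asymptotic expansion, identify it as an element of $\mathfrak{u}(2)\ltimes\CC^2$, and then compare the flow of $Z$ with the flow of this affine field. First I would settle existence of the time-one flow: since $X$ is a complete ALE manifold and $Z\in\H_1(X,TX)$ grows at most linearly, $|Z|\le C(1+r)$, the integral curves obey $\tfrac{d}{dt}\,r(\Phi_Z^t(x))\le C(1+r(\Phi_Z^t(x)))$, so by Gronwall $r$ grows at most exponentially in $t$ and no trajectory escapes in finite time; hence $\Phi_Z=\Phi_Z^1$ is defined for $Z$ small. Next, since $Z$ is $\square$-harmonic with $Z=O(r)$ and near infinity $\square=\tfrac12\Delta_{Euc}+Q$ with $Q$ of lower order (as in the proof of Proposition \ref{dcyprop}), the leading behavior is governed by homogeneous harmonic vector fields on $\RR^4/\Gamma$ of degree at most $1$. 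The key point is that $\RR^4$ carries no homogeneous harmonic function of degree $-1$ (the admissible degrees are $\{\dots,-3,-2\}\cup\{0,1,2,\dots\}$), so the expansion jumps from the constant term directly to order $r^{-2}$, giving
\begin{align}
Z = Bx + c + O(r^{-2+\epsilon}),
\end{align}
where $Bx$ is the linear term with $B$ a constant matrix commuting with $\Gamma$, and $c\in\CC^2$ is the constant term (which vanishes when $\Gamma$ fixes no nonzero vector). The $\epsilon$-loss absorbs the borderline terms generated by $Q$ acting on the leading part.

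With the expansion in hand, I would pin down $B$ using the two conditions in the definition of $\WW$. Computing Lie derivatives of the leading term against the flat structures gives $\mathfrak{L}_{Bx}g_{Euc}=B+B^T$ and $\mathfrak{L}_{Bx}J_{Euc}=[B,J_{Euc}]$, both $O(1)$, while the contributions of $g_0-g_{Euc}$ and $J_0-J_{Euc}$ (which decay by Proposition \ref{p1.4}) are strictly lower order. Hence $\mathfrak{L}_Z g_0=O(r^{-1})$ forces $B+B^T=0$ and $\mathfrak{L}_Z J_0=O(r^{-3})$ forces $[B,J_{Euc}]=0$, so $B\in\mathfrak{u}(2)$. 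Setting $A=e^{B}\in\U(2)$ and $\hat\epsilon_0=\int_0^1 e^{(1-s)B}c\,ds\in\CC^2$, the time-one flow of the affine field $Z^{lin}(x)=Bx+c$ is precisely $x\mapsto Ax+\hat\epsilon_0$.

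The final step is the flow comparison. Writing $Z=Z^{lin}+Z'$ with $Z'=O(r^{-2+\epsilon})$, and letting $w(t)=\Phi_Z^t(x)-\Psi^t(x)$ where $\Psi^t$ is the flow of $Z^{lin}$, the affineness of $Z^{lin}$ gives $\dot w=Bw+Z'(\Phi_Z^t(x))$, so by variation of parameters $w(1)=\int_0^1 e^{(1-s)B}Z'(\Phi_Z^s(x))\,ds$. Since $\Psi^s$ is an affine $\U(2)$-motion, $r(\Psi^s(x))\asymp r(x)$, and the Gronwall bound keeps $r(\Phi_Z^s(x))\asymp r(x)$ for large $r$; therefore $|Z'(\Phi_Z^s(x))|=O(r(x)^{-2+\epsilon})$ uniformly in $s\in[0,1]$ and $|w(1)|=O(r^{-2+\epsilon})$. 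Converting this coordinate difference into ALE distance at infinity yields $d(\Phi_Z(x),Ax+\hat\epsilon_0)=O(r^{-2+\epsilon})$, as claimed.

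The main obstacle I expect is the expansion step: rigorously showing the correction to the affine leading part decays like $r^{-2+\epsilon}$ rather than merely $o(r)$. This hinges on the absence of a degree $-1$ homogeneous harmonic mode in dimension four, together with careful control of the error operator $Q$ and the resonant or logarithmic terms it can produce at the $r^{-2}$ threshold; it is precisely here that the weighted-space machinery must be applied with care. The remaining steps—flow existence, the algebraic identification $B\in\mathfrak{u}(2)$, and the Gronwall comparison—are comparatively routine once the decay of $Z'$ is established.
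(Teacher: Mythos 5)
Your overall strategy---affine leading part from a harmonic expansion, identification of the linear term via the conditions defining $\WW$, then an ODE comparison for the flow---is the same as the paper's, and your flow-existence (Gronwall) and flow-comparison (variation of parameters) steps are fine, indeed more detailed than what the paper writes. However, there is a genuine gap exactly where you flagged it, and your proposed resolution cannot work in the order you have arranged the steps. You claim $Z = Bx + c + O(r^{-2+\epsilon})$ \emph{before} knowing anything about $B$ beyond $\Gamma$-equivariance, arguing that the absence of a degree $-1$ homogeneous harmonic mode on $\RR^4$ forces the expansion to jump from the constant term directly to $r^{-2}$. This is not correct: $Z$ is harmonic for the curved operator $\square_0$, not for $\Delta_{Euc}$, so the remainder $w = Z - Bx - c$ solves the \emph{inhomogeneous} equation $\square_0 w = -\square_0(Bx+c)$, and without further information the source $\square_0(Bx+c)$ is generically only $O(r^{-3})$. (For instance, using the identity $\square_0 Y = \delta_0 \mathfrak{L}_Y J_0$ from the proof of Lemma \ref{dfree}: if $[B,J_{Euc}]\neq 0$, then $\mathfrak{L}_{Bx+c}J_0$ has a nonvanishing constant term, whose $g_0$-divergence is of the size of the Christoffel symbols of $g_0$, i.e.\ only $O(r^{-3})$.) A source at rate $-3$ forces a particular solution at rate $-1$: in $\RR^4$ one has $\Delta_{Euc} r^{-1} = -r^{-3}$. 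That $r^{-1}$ term is a \emph{particular} solution, not a homogeneous harmonic mode, so excluding degree $-1$ harmonics does nothing to rule it out; the expansion you can actually justify at this stage is only $Z = Bx + c + O(r^{-1+\epsilon})$.

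The repair is to reverse your steps 2 and 3, which is precisely the paper's proof. The algebraic identification of $B$ needs only the coarse expansion: if $Z' = O(r^{-1+\epsilon})$, then $\mathfrak{L}_{Z'}g_0$ and $\mathfrak{L}_{Z'}J_0$ are $o(1)$, so the conditions $\mathfrak{L}_Z g_0 = O(r^{-1})$ and $\mathfrak{L}_Z J_0 = O(r^{-3})$ already force the constant terms $B+B^{T}$ and $[B,J_{Euc}]$ to vanish, i.e.\ $B \in \mathfrak{u}(2)$. Only then can the source be improved: $\mathfrak{L}_{Bx+c}J_0 = \mathfrak{L}_{Bx+c}(J_0 - J_{Euc}) = O(r^{-3})$, hence $\square_0\bigl(Z - Bx - c\bigr) = O(r^{-4})$, whose particular solutions sit at $O(r^{-2})$ (up to logarithms, whence the $\epsilon$), and \emph{now} the absence of a degree $-1$ harmonic term yields $Z - Bx - c = O(r^{-2+\epsilon})$. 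With this reordering, the remainder of your argument, including $A = e^{B}\in \U(2)$, $\hat{\epsilon}_0 = \int_0^1 e^{(1-s)B}c\,ds$, and the Gronwall comparison of the two flows, goes through as written.
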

\begin{proof}

First, note that $Z \in \WW$ admits an expansion 
\begin{align}
\label{Zexp}
Z = Z_1 + Z_0 + Z_{\mu}, 
\end{align}
for any $\epsilon > 0$,  where $Z_1$ is in the Lie algebra of ${\rm{U}}(2)$,
$Z_0$ is vector field with constant coefficients,
and $Z_{\mu} =  O(r^{-\mu})$ as $r \rightarrow \infty$. 
To see this, by standard harmonic expansion for kernel 
elements of $\square_{g_0}$, we have that any $Z \in \WW$ satisfies
\begin{align}
Z = Z_1 + Z_0 + O(r^{-\mu}),
\end{align}
where $Z_1$ is any vector field which is homogeneous of degree one, 
and $Z_0$ is a vector field with constant coefficients. 
The conditions that $\mathfrak{L}_Z g_0 = O(r^{-1})$ 
and $\mathfrak{L}_Z J_0 = O(r^{-3})$ as $r \rightarrow \infty$ imply 
that $\mathfrak{L}_{Z_1} g_{Euc} = 0$ and $\mathfrak{L}_{Z_1} J_{Euc} = 0$,
that is, $Z_1$ is in the Lie algebra of ${\rm{U}}(2)$. Next, 
\begin{align}
\mathfrak{L}_{Z_1 + Z_0} J_0 
= \mathfrak{L}_{Z_1 + Z_0} (J_0 - J_{Euc})  
=  \mathfrak{L}_{Z_1 + Z_0} (O(r^{-3})),
\end{align}
which implies that 
\begin{align}
\mathfrak{L}_{Z_0 + Z_1} J_0  = O(r^{-3}),
\end{align}
as $r \rightarrow \infty$. Consequently, 
\begin{align}
\square_0 ( Z - Z_1 - Z_0) = O(r^{-4}). 
\end{align}
Since there is no harmonic term of degree $-1$, the claimed
expansion \eqref{Zexp} follows. 

Clearly for $Z$ sufficiently small in norm, the one-parameter group of diffeomorphisms
which is defined by 
\begin{align}
x'(t) = Z(x(t)), \; x(0) = x,
\end{align}
exists by the standard short-time existence theorem for ODEs.
By the expansion \eqref{Zexp}, $x(t)$ will be given by a family of rotations
in ${\rm{U}}(2)$, and translations plus decaying terms.
\end{proof}
\begin{remark}{\em
Lemma \ref{Zlem} has assumed an ALE coordinate with inverse cubic complex structure asymptotic rate. In the following, note that the statements Lemma \ref{W} and Theorem \ref{t5.2} are independent of the ALE coordinate in the sense that, if they are true under the ALE coordinate $(x^1,x^2,x^3,x^4)$, then they are
also true under the ALE coordinate $({x'}^1,{x'}^2,{x'}^3,{x'}^4)$, as long as the two coordinates
are comparable, i.e, $\frac{1}{C}|x|<|x'|<C|x|$ for some constant $C>0$.
Thus, in their proofs, we are free to choose an ALE coordinate such that $|J_0-J_{Euc}|=O(r^{-3})$ by Proposition \ref{p1.4}.
}
\end{remark}

The next lemma states some useful properties of $\mathbb{W}$, which will be 
needed in the proof of the subsequent gauging theorem.  
For the simplicity of the statement of the theorem, define the norm $\|Z\|_{k+1,\alpha}$ for $Z\in \mathbb{W}$ as
$\|Z\|_{k+1,\alpha} = \|Z\|_{C^{k+1,\alpha}_1}$ if $\lim_{r\to\infty} |r^{-1}\cdot Z| \neq 0$,
$\|Z\|_{k+1,\alpha} = \|Z\|_{C^{k+1,\alpha}}$ if $\lim_{r\to\infty} |r^{-1}\cdot Z| = 0$.
Note that if $Z \in \mathbb{W}$ is decaying, then, $Z \equiv 0$.
\begin{lemma}
\label{W}
Let $(X,J_0,g_0)$ be a K\"ahler ALE surface, the space $\mathbb{W}$ satisfies the following properties: 
\begin{itemize}
\item
There exists a small neighborhood $U\subset \mathbb{W}$ of $0$, 
and an $\epsilon' > 0$ such that if $g_1$ is any metric
satisyfing 
$\Vert g_1 - g_0 \Vert_{C^{k,\alpha}_{\delta}} < \epsilon'$, 
then  there exists a constant $C$, such that for any $Z\in U$, 
\begin{align}
\label{wl2}
\Vert \Upsilon^*_Z g_1 - g_0 \Vert_{C^{k,\alpha}_{\delta}} 
\leq C ( \Vert Z\Vert_{k+1,\alpha} + \epsilon').
\end{align}
\item
Given $\epsilon >0$ sufficiently small, for any complex structure $J_1 = J_0(\phi)$ such that $\Vert \phi \Vert_{C^{k,\alpha}_{-3+ \epsilon }}<\epsilon'$ 
in the family constructed in Theorem \ref{kurthm}
for some sufficiently small $\epsilon'$, there exists a small neighborhood $U\subset \mathbb{W}$ of $0$, and a constant $C$, such that for any $Z,Z'\in U$,
\begin{align}
\label{wl3}
\Vert \Upsilon^*_Z J_1 - \Upsilon^*_{Z'}J_1 -  \mathfrak{L}_{Z-Z'} J_1 \Vert_{C^{k,\alpha}_{-3 + \epsilon}} 
\leq C \cdot \Vert Z-Z'\Vert_{k+1,\alpha}\cdot(\Vert Z\Vert_{k+1,\alpha}+\Vert Z'\Vert_{k+1,\alpha}).
\end{align}
Furthermore, the following map 
\begin{align}
\label{Upsilon}
\begin{split}
\Upsilon^*: \H_{-3}(X,End_a(TX))\times \mathbb{W} &\rightarrow C^{k,\alpha}_{-3+\epsilon}(X,End_a(TX)) \\
(Re(t), Z) & \mapsto E_{J_0}^{-1}(\Upsilon^*_Z J_0(\phi(t))) \equiv \phi_Z(t)
\end{split}
\end{align}
is smooth in an open neighborhood of $(0,0)$.
\end{itemize}
\end{lemma}
\begin{proof}
Fix an ALE coordinate under which $|J_0-J_{Euc}|=O(r^{-3})$.
Under such a coordinate, we have the decomposition of $Z$ as stated in Lemma \ref{Zlem}.
For simplicity, we assume $Z_1 \neq 0$ in the following,
and  $\|Z\|_{k+1,\alpha} = \|Z\|_{C^{k+1,\alpha}_1}$. (The proof for the case $Z_1 = 0$ is similar.)

First, we address the estimate \eqref{wl2}.
Let $\Upsilon_{A, \hat{\epsilon}_0}$ denote the mapping $x \mapsto A x +  \hat{\epsilon}_0$.
Using Lemma~\ref{Zlem},
\begin{align*}
|\Upsilon_Z^* g_1 - g_0 |
&= |\Upsilon_Z^* g_1 - g_1 + g_1 - g_0 |
\leq |\Upsilon_Z^* g_1 - g_1| + |g_1 - g_0 |\\
&
\leq |  ( \Upsilon_{A, \hat{\epsilon}_0} + O (r^{-2 + \epsilon}))^* ( \delta_{ij} + O(r^{\delta}))
 -( \delta_{ij} + O(r^{\delta})) | + \epsilon' r^{\delta}\\
& \leq C \Vert Z \Vert_{C^{k+1,\alpha}_{1}}  r^{\delta}   + \epsilon' r^{\delta} + o(r^{\delta})\\
& \leq C r^{\delta}( \Vert Z\Vert_{C^{k+1,\alpha}_{1}} + \epsilon').
\end{align*}
as $r \rightarrow \infty$. Since $\Upsilon_Z$ is the time one flow of $Z \in \WW$, 
and $\WW$ is finite-dimensional, a similar estimate holds on any 
compact subset of $X$, so the $C^0_\delta$ part of the norm is bounded by the 
right hand side of \eqref{wl2}.  
Higher regularity estimates are similar, and are omitted. 

Next we will discuss \eqref{wl3}. 
As in the first part, since $\WW$ is finite-dimensional, 
we only need to make estimates outside of a compact set $\overline{B_R(p_0)}$, where a global coordinate exists.
Let $\gamma:[0,1]\rightarrow X$ be the path $\gamma(t) = \Upsilon_{tZ}(x)$. 
First, we estimate
\begin{align}
\begin{split}
r(x)^{3-\epsilon}|\Upsilon^*_Z J_0-J_0-\mathfrak{L}_{Z}J_0|(x) 
&\leq C r^{3-\epsilon}\int_0^1 |\Upsilon^*_{tZ}(\mathfrak{L}_{Z}J_0) (\gamma(0))-\mathfrak{L}_{Z} J_0 (\gamma(0))| dt\\
&\leq C r^{3 - \epsilon}\int_0^1\int_0^t|\mathfrak{L}_{Z}(\mathfrak{L}_{Z}J_0) (\gamma(s))| ds  dt.
\end{split}
\end{align}
By the expansion \eqref{Zexp}, given $c>0$, the estimates 
\begin{align}
(1 - c) r(\gamma(s))\leq r(x) = r(\gamma(0))\leq (1+ c) r(\gamma(s))
\end{align} 
are satisfied for all $Z$ sufficiently small in norm.
Note that since $Z_1$ is a vector field corresponding to a rotation in $\mathfrak{u}(2)$, $|Z_1|\geq C\cdot r$ for some constant $C$.
Also, since $\mathbb{W}$ is finite dimensional, $|\nabla^{\mathcal{I}}Z|\leq C(|\mathcal{I}|)\cdot r^{-|\mathcal{I}|}\cdot (|Z|+1)$
for some constant $C(|\mathcal{I}|)>0$.
Then
\begin{align}
|\mathfrak{L}_{Z}(\mathfrak{L}_{Z}J_0)|(\gamma(s)) \leq Cr(\gamma(s))^{-5}( |Z(\gamma(s))|+1)^2,
\end{align}
so we have
\begin{align}
\begin{split}
r(x)^{3-\epsilon}|\Upsilon^*_Z J_0-J_0-\mathfrak{L}_{Z}J_0|(x) 
&\leq C \int_0^1\int_0^t r(\gamma(s))^{3-\epsilon} r(\gamma(s))^{-5}( |Z|+1)^2(\gamma(s))  ds dt\\
&\leq C \int_0^1\int_0^t r(\gamma(s))^{-2 - \epsilon}(|Z|+1)^2(\gamma(s)) ds dt \leq C\Vert Z\Vert_{C^{k+1,\alpha}_{1}}^2.
\end{split}
\end{align}
Next, we estimate
\begin{align}
\begin{split}
r(x)^{3-\epsilon}|\Upsilon^*_Z \phi-\phi-\mathfrak{L}_Z \phi|(x) 
& \leq C r^{3-\epsilon}\int_0^1 |\Upsilon^*_{tZ}(\mathfrak{L}_{Z}\phi)(\gamma(0))-\mathfrak{L}_Z\phi(\gamma(0))| dt\\
&\leq C r^{3-\epsilon}\int_0^1 \int_0^t|\mathfrak{L}_Z(\mathfrak{L}_{Z}\phi)(\gamma(s))|ds  dt.
\end{split}
\end{align}
Since $|\mathfrak{L}_{Z}(\mathfrak{L}_{Z}\phi)|(\gamma(s)) \leq  C r(\gamma(s))^{-5 + \epsilon}( |Z|+1)^2(\gamma(s))$, 
similarly, we have the estimate
\begin{align}
\begin{split}
r(x)^{3-\epsilon}|\Upsilon^*_Z \phi-\phi-\mathfrak{L}_Z \phi|(x) 
&\leq C \int_0^1\int_0^t r(\gamma(s))^{3-\epsilon}(r(\gamma(s))^{-5+\epsilon}( |Z|+1)^2(\gamma(s))) ds dt\\
&\leq C \int_0^1\int_0^t r(\gamma(s))^{-2}(|Z|+1)^2(\gamma(s)) ds dt \leq C\Vert Z\Vert_{C^{k+1,\alpha}_{1}}^2.
\end{split}
\end{align}
By a similar calculation, for the higher order term $Q$ in \eqref{expansion}  we also have
\begin{align}
r(x)^{3-\epsilon}|\Upsilon^*_Z Q-Q-\mathfrak{L}_Z Q|(x) \leq C\Vert Z\Vert_{C^{k+1,\alpha}_{1}}^2
\end{align}
It follows that
\begin{align}
\label{estimateR}
\begin{split}
r(x)^{3-\epsilon}|\Upsilon^*_Z J_1-J_1-\mathfrak{L}_{Z} J_1|(x) 
&\leq r(x)^{3-\epsilon}\{|\Upsilon^*_Z J_0-J_0-\mathfrak{L}_{Z}J_0|(x)+ \\
& |\Upsilon^*_Z \phi-\phi-\mathfrak{L}_Z \phi|(x)\} + |\Upsilon^*_Z Q-Q-\mathfrak{L}_Z Q|(x)\}\\
&\leq C\Vert Z\Vert_{C^{k+1,\alpha}_{1}}^2,
\end{split}
\end{align}
where $C$ is uniform for all $Z\in \WW$ of sufficiently small norm. 
And similarly, for any $Z,Z'$ in a small neighborhood of $0$, 
\begin{align}
\Upsilon_Z^* J_1 - \Upsilon_{Z'}^* J_1 &= \Upsilon_{Z'}^*((\Upsilon^*_{Z'})^{-1}\circ\Upsilon^*_Z J_1 - J_1) \\
&= \Upsilon_{Z'}^*((Id - \mathfrak{L}_{Z'}+ R')\circ(Id+\mathfrak{L}_Z+ R) J_1 - J_1)
\end{align}
where $R,R'$ represents the corresponding higher order terms, and has estimate as in \eqref{estimateR}. Then we have
\begin{align}
\label{furtherwl3}
r^{3-\epsilon}|\Upsilon^*_ZJ_1-\Upsilon^*_{Z'}J_1-\mathfrak{L}_{Z-Z'}J_1|\leq C\cdot \|Z-Z'\|_{C^{k+1,\alpha}_1}
\cdot(\|Z\|_{C^{k+1,\alpha}_1}+ \|Z'\|_{C^{k+1,\alpha}_1})
\end{align}
Higher order derivative estimates are similar, and are omitted, and \eqref{wl3} follows.
Then by \eqref{expansionE} and \eqref{wl3} 
we can see that $\Upsilon^*$ is differentiable, and satisfies the condition of Lemma~\ref{l2.4}. 
A similar estimate of higher order expansions of $\Upsilon^*$ shows that it is a smooth map near $(0,0)$. 
\end{proof}
Recall that 
\begin{align}
\VV = \{ \theta \in \H_{-3}(X,End_a(TX)) \  | \ 
\theta = \mathfrak{L}_Z J, \  Z \in \WW \}.
\end{align}
The deformations of complex structure arising from elements of $\VV$
become trivial if one takes into account changes of 
coordinates at infinity, as the following theorem shows. 
\begin{theorem}
\label{t5.2}
Let $(X,J_0,g_0)$ be a K\"ahler ALE surface with $J_0 \in C^{\infty}$ and $g_0 \in C^{\infty}$. There exists an $\epsilon_1'>0$ such that for any complex structure $\Vert J_1-J_0\Vert_{C^{k,\alpha}_\delta}<\epsilon_1'$, where $k\geq 3, \alpha\in (0,1),\delta\in (-2,-1)$, there exists a
diffeomorphism $\Phi$,
of the form $\Phi_{Y_1} \circ \Upsilon_Z \circ \Phi_{Y_2}$ for 
$Y_1,Y_2 \in C^{k+1,\alpha}_{\delta+1}(TX)$
and $Z \in \WW$, such that $\nabla^*_{g_0}(E_{J_0}^{-1}(\Phi^*(J_1))) = 0$,
and such that $E_{J_0}^{-1}(\Phi^*(J_1))$ is   $L^2$-orthogonal to $\VV$.
Furthermore, there exists a constant $C$ so that 
\begin{align}
\label{closej}
\Vert E_{J_0}^{-1}(\Phi^* J_1)  \Vert_{C^{k,\alpha}_{-3}} < C \epsilon_1'.
\end{align}
\end{theorem}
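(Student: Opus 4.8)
The plan is to build $\Phi$ in stages matching the stated composition $\Phi = \Phi_{Y_1}\circ\Phi_Z\circ\Phi_{Y_2}$, so that in terms of pullbacks $\Phi^*J_1 = \Phi_{Y_2}^*\Phi_Z^*\Phi_{Y_1}^*J_1$. First I would apply Lemma \ref{dfree} to obtain $\Phi_{Y_1}$ with $Y_1\in C^{k+1,\alpha}_{\delta+1}(TX)$ placing $J_1$ into the Kuranishi slice: writing $J_1' = \Phi_{Y_1}^*J_1$ and $\phi = J_1'-J_0$, we get $\delta_0\phi = 0$, $\bar\partial\phi+[\phi,\phi]=0$, and by the regularity argument at the end of Theorem \ref{kurthm}, $\phi\in C^{k,\alpha}_{-3}(X,\Lambda^{0,1}\otimes\Theta)$ with $\Vert\phi\Vert_{C^{k,\alpha}_{-3}}\le C\epsilon_1'$. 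Since the divergence-free condition identifies $\phi$ with its Hodge decomposition, the only thing left is to remove the $\VV$-component of the harmonic part of $\phi$ while preserving the gauge; I would achieve this by solving for $Z\in\WW$ and $Y_2\in C^{k+1,\alpha}_{\delta+1}(TX)$ simultaneously.

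To that end, define
\[
\mathcal{G}(Z,Y_2) = \big(\Pi_{\VV}(\Phi_{Y_2}^*\Phi_Z^*J_1'-J_0),\ \delta_0(\Phi_{Y_2}^*\Phi_Z^*J_1'-J_0)\big)\in \VV\oplus C^{k-1,\alpha}_{\delta-1}(T^*X),
\]
where $\Pi_{\VV}$ is the $L^2$-orthogonal projection onto the finite-dimensional space $\VV$ (well-defined on these weights since $\phi$ and $\VV$ both decay like $r^{-3}$). Using the smoothness of $Y\mapsto\Phi_Y^*g_0$ from Lemma \ref{smooth} together with estimate \eqref{wl3} of Lemma \ref{W}, which controls $\Phi_Z^*J_1'-\mathfrak{L}_ZJ_1'-J_0$ by $\Vert Z\Vert^2$, I expect $\mathcal{G}$ to be smooth between Banach spaces and to satisfy the remainder bound \eqref{Q}, with $\mathcal{G}(0,0)=(\Pi_{\VV}\phi,0)$ of size $O(\epsilon_1')$. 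I would absorb the terms $\mathfrak{L}_Z\phi$ and $\mathfrak{L}_{Y_2}\phi$ (linear in the variables but multiplied by the small $\phi$) into the nonlinear remainder, so that the leading linear part is
\[
L[Z,Y_2] = \big(\mathfrak{L}_ZJ_0,\ \delta_0\mathfrak{L}_ZJ_0+\square_0 Y_2\big),
\]
using that $\mathfrak{L}_ZJ_0\in\VV$, that $\mathfrak{L}_{Y_2}J_0=\bar\partial Y_2$ is $L^2$-orthogonal to $\H_{-3}\supset\VV$ (integration by parts is valid since $Y_2=O(r^{\delta+1})$ and harmonic elements decay like $r^{-3}$), and that $\delta_0\mathfrak{L}_{Y_2}J_0=\square_0 Y_2$ as in Lemma \ref{dfree}.

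This operator $L$ is surjective with a bounded right inverse: given $(v,w)$, first solve $\mathfrak{L}_ZJ_0=v$, possible since $\VV=\{\mathfrak{L}_ZJ_0:Z\in\WW\}$ with $\WW$ finite-dimensional, then solve $\square_0 Y_2 = w-\delta_0\mathfrak{L}_ZJ_0$ using surjectivity of $\square_0:C^{k+1,\alpha}_{\delta+1}(TX)\to C^{k-1,\alpha}_{\delta-1}(T^*X)$ established via Proposition \ref{p2.5} in the proof of Lemma \ref{dfree}. Hence Lemma \ref{l2.4} applies and yields $(Z,Y_2)$ near $(0,0)$ with $\mathcal{G}(Z,Y_2)=0$. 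Setting $\Phi=\Phi_{Y_1}\circ\Phi_Z\circ\Phi_{Y_2}$ then makes $\Phi^*J_1-J_0=\Phi_{Y_2}^*\Phi_Z^*J_1'-J_0$ simultaneously divergence-free and $L^2$-orthogonal to $\VV$. Finally, since $\Phi^*J_1-J_0$ is divergence-free and integrable it satisfies the system \eqref{condition}, so the regularity argument of Theorem \ref{kurthm} places it in $C^{k,\alpha}_{-3}$, and \eqref{closej} follows from the smallness of $\phi$, $Z$, $Y_2$ and finite-dimensionality of $\WW$.

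The main obstacle I anticipate is verifying that $\mathcal{G}$ is genuinely smooth between the weighted H\"older spaces with the correct target weight and satisfies \eqref{Q}, because the flow $\Phi_Z$ of an element $Z\in\WW\subset\H_1$ grows linearly and acts at infinity as an asymptotic unitary rotation plus translation (Lemma \ref{Zlem}). Controlling $\Phi_Z^*J_1'$ in the $r^{-3}$-type weight, and ensuring that the $\VV$-projection and the re-gauging by $\Phi_{Y_2}$ do not interfere beyond the quadratic order already absorbed into the remainder, is the delicate point; this is precisely what estimates \eqref{wl2} and \eqref{wl3} are designed to supply, and care is needed that the composition of the two nonlinear operations $\Phi_Z^*$ and $\Phi_{Y_2}^*$ retains the remainder estimate uniformly.
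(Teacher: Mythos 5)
Your overall architecture is the same as the paper's: first apply Lemma \ref{dfree} to put $J_1$ in divergence-free gauge with $\phi = \Phi_{Y_1}^*J_1 - J_0 \in C^{k,\alpha}_{-3}$, then run a simultaneous fixed-point argument in $(Z,Y_2)\in \WW\times C^{k+1,\alpha}(TX)$ whose two components enforce the gauge condition and $L^2$-orthogonality to $\VV$, with surjectivity of the linearization coming from Proposition \ref{p2.5} (for the $\square_0 Y_2$ part) and from the fact that $\VV$ is by definition the image of $Z\mapsto \mathfrak{L}_Z J_0$, $Z\in\WW$. However, there is a genuine gap exactly at the point you flag as ``the main obstacle,'' and the tools you cite do not close it. Your operator $\mathcal{G}$ is built from $Y_2\mapsto \Phi_{Y_2}^*\bigl(\Phi_Z^*J_1'\bigr)$, i.e.\ from the pullback by $\Phi_{Y_2}$ of a tensor with only finite H\"older regularity $C^{k,\alpha}_{-3}$. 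Such a map is \emph{not} differentiable between the weighted H\"older spaces in play: each derivative in the $Y_2$-direction costs a derivative of the tensor being pulled back, so $\mathcal{G}$ is not even $C^1$ into the target space where the quadratic estimate \eqref{Q} is needed, and Lemma \ref{l2.4} cannot be invoked for $\mathcal{G}$ as you define it. The paper states this failure explicitly in the proof of Lemma \ref{dfree}: the condition $\delta_{{\Phi_Y}_*g_0}(J_0+\phi)=0$ is equivalent to $\delta_{g_0}\Phi_Y^*(J_0+\phi)=0$, but ``the latter map is not differentiable, which is why we consider the former.'' Note also that Lemma \ref{smooth}, which you invoke, only gives smoothness of $Y\mapsto {\Phi_Y}_*g_0$ for the \emph{smooth} metric $g_0$; it says nothing about pullbacks of a $C^{k,\alpha}$ tensor, and \eqref{wl3} is an estimate on the flow of the finite-dimensional space $\WW$, not a differentiability statement in $Y_2$.

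The repair is precisely the device used in the paper's proof of this theorem: formulate both conditions so that the $Y_2$-dependence enters only through pushforwards of smooth data. Namely, require $\delta_{{\Phi_{Y_2}}_*g_0}\bigl(\Phi_Z^*(J_0+\phi)\bigr)=0$ for the gauge, and
\begin{align*}
\int_X \bigl\langle \Phi_Z^*(J_0+\phi),\, {\Phi_{Y_2}}_* v_i \bigr\rangle_{{\Phi_{Y_2}}_*g_0}\, dV_{{\Phi_{Y_2}}_*g_0} = 0, \qquad i=1,\dots,m,
\end{align*}
for the $\VV$-orthogonality, where $\{v_i\}$ is an orthonormal basis of $\VV$ (the $v_i$ are $\square_0$-harmonic, hence smooth). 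By change of variables these have the same zero set as your conditions, but now the nonlinearity in $Y_2$ sits entirely in ${\Phi_{Y_2}}_*g_0$, ${\Phi_{Y_2}}_*v_i$ and $dV_{{\Phi_{Y_2}}_*g_0}$, to which Lemma \ref{smooth} (and its evident extension to pushforwards of smooth tensors) applies, while the $Z$-dependence is controlled by Lemma \ref{W}. Two smaller points: (i) the terms $\mathfrak{L}_Z\phi$ and $\mathfrak{L}_{Y_2}\phi$ you propose to ``absorb into the nonlinear remainder'' are linear in $(Z,Y_2)$ with small coefficient, so they do not literally satisfy \eqref{Q}; the clean treatment, as in the paper, is to regard $\phi$ as an additional argument of the operator, linearize at $(0,0,0)$, and use that surjectivity with uniformly bounded right inverse is an open condition; (ii) the paper solves for $Y_2$ in the faster-decaying space $C^{k+1,\alpha}_{-2+\epsilon}(TX)$ with target $C^{k-1,\alpha}_{-4+\epsilon}(T^*X)$, exploiting that $\phi$ already decays at rate $-3$, which is what makes the orthogonality integrals and the final estimate \eqref{closej} come out cleanly.
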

\begin{proof}
Let $J_1 = J_0(\phi)$, where $\phi\in C^{k,\alpha}_{\delta}(X,\Lambda^{0,1}\otimes\Theta)$, and $\bar\partial\phi+[\phi,\phi]=0$. By Lemma~\ref{dfree} and Theorem 
\ref{kurthm}, without loss of generality, we may assume that 
$\phi = \phi(t) \in C^{k,\alpha}_{-3}(X, \Lambda^{0,1} \otimes \Theta)$, where $t\in B_{\epsilon_1}\subset \H_{-3}(X,\Lambda^{0,1}\otimes\Theta)$. 
In the following of the proof, we will fixed the ALE coordinate as in Lemma \ref{Zlem}, and again for simplicity,
we will assume that $Z_1\neq 0$.

Define the operator $\mathfrak{N}$ as
\begin{align}
\begin{split}
&C^{k+1,\alpha}_{-2+\epsilon}(TX)\times  
\mathcal{H}_{-3}(X, End_a(TX))
\times\WW \rightarrow C^{k-1,\alpha}_{-4+\epsilon}(TX)\times \RR^m\\
& (Y,Re(t),Z)\mapsto \\
&\Big\{ \nabla^*_{{\Phi_Y}_*g_0}E_{{\Phi_Y}_*J_0}^{-1}(\Upsilon_Z^*J_0(\phi(t))),  
\int \Big\langle E_{{\Phi_Y}_*J_0}^{-1} (\Upsilon_Z^*J_0(\phi(t))), {\Phi_Y}_*v_1
\Big\rangle_{{\Phi_Y}_*g_0} dV_{{\Phi_Y}_*g_0}, \\
& \hspace{5cm}\cdots,
\int \Big\langle E_{{\Phi_Y}_*J_0}^{-1} (\Upsilon_Z^*J_0(\phi(t))), {\Phi_Y}_*v_m \Big\rangle_{{\Phi_Y}_*g_0}dV_{{\Phi_Y}_*g_0}\Big\},
\end{split}
\end{align}
where $0<\epsilon\ll 1$, $\nabla^*_{{\Phi_Y}_*g_0}$ is the divergence operator associated to the metric 
${\Phi_Y}_*g_0$, $m = \dim(\VV)$, $\{v_1,\ldots,v_m\}$ is an orthonormal basis of $\VV$. 
Note that the method of (\ref{trick}) is used here again to make the operator smooth.
Also note that the map 
\begin{align}
\label{N1}
(Y, Re(t), Z) \mapsto \nabla^*_{{\Phi_Y}_*g_0}E_{{\Phi_Y}_*J_0}^{-1}(\Upsilon_Z^*J_0(\phi(t)))
\end{align}
is defined by choosing the first parameter of $\mathfrak{P}$
to be $(\Phi_Y)_*g_0-g_0$, 
choosing the second parameter to be $(\Phi_Y)_*J_0-J_0$,
choosing the third parameter to be $\phi_Z(t)$ (defined in \eqref{Upsilon}),
and letting $\delta' = -3+\epsilon$.
Since $\mathfrak{P}$ is smooth by Lemma \ref{smoothop}, and since the mappings
$Y \mapsto (\Phi_Y)_*g_0-g_0$, and $Y \mapsto (\Phi_Y)_*J_0-J_0$ 
are smooth by Lemma \ref{smooth}, and since 
$\Upsilon^*$ is smooth by Lemma \ref{W}, we have that \eqref{N1} is a smooth mapping.
By applying the same argument to the inner products in $\mathfrak{N}$, we can show that $\mathfrak{N}$ is smooth 
in an open neighborhood of $(0,0,0)$.

At $(0,0,0)$, the linearization of $\mathfrak{N}$ restricted to the tangent spaces of the first and third variables is:
\begin{align}
\begin{split}
& D\,\mathfrak{N}_0: C^{k+1,\alpha}_{-2+\epsilon}(TX)\times \WW \rightarrow C^{k-1,\alpha}_{-4+\epsilon}(TX)\times\RR^m \\
&(Y,Z) \mapsto \Big\{\nabla^*_{g_0}\mathfrak{L}_YJ_0 = \square_{g_0} Y, \int \Big\langle \frac{-1}{2}J_0\mathfrak{L}_{Z}J_0-\frac{1}{2}J_0\mathfrak{L}_Y J_0,v_1 \Big\rangle_{g_0} dV_{g_0},\\
& \hspace{5cm}\cdots,\int \Big\langle \frac{-1}{2}J_0 \mathfrak{L}_{Z}J_0-\frac{1}{2}J_0 \mathfrak{L}_Y J_0,v_m \Big\rangle_{g_0} dV_{g_0}\Big\}.
\end{split}
\end{align}
Restricting the domain and image, consider $D\mathfrak{N}_0$ mapping
from $C^{k+1,\alpha}_{-2+\epsilon}(TX)$ to $C^{k-1,\alpha}_{-4+\epsilon}(TX)$.
The adjoint of this restricted mapping, which we again denote by
$D\,\mathfrak{N}_0$, is:
\begin{align}
\begin{split}
(D\,\mathfrak{N}_0)^*: C^{k+1,\alpha}_{-\epsilon}(TX)&\rightarrow C^{k-1,\alpha}_{-2-\epsilon}(TX) \\
\eta &\mapsto (\square_{g_0} \eta)
\end{split}
\end{align}
The kernel of $(D\,\mathfrak{N}_0)^*$ consists of those $\eta\in C^{k+1,\alpha}_{-\epsilon}(TX)$ such that $(\square_{g_0} \eta)=0$, which implies that $\eta$ is the real part of a holomorphic vector field. 
From Proposition \ref{p2.5}, $\eta=0$. Since the cokernel of $D\,\mathfrak{N}_0$ is trivial, $D\,\mathfrak{N}_0$ is surjective on the restricted domain and range. 

It follows that the full mapping  $D \mathfrak{N}_0$ is surjective, 
since $\VV$ is generated by $\frac{-1}{2}J_0\mathfrak{L}_Z J_0$ for $Z\in \WW$.
Clearly then, in a small neighborhood of $(0,0,0)$,  $D \,\mathfrak{N}$ is surjective.
Since the mapping $\mathfrak{N}$ is smooth, Condition \eqref{Q} is satisfied,
so the existence of diffeomorphism $\Phi$ then follows from Lemma~\ref{l2.4}. 
Finally, the estimate \eqref{closej} follows from estimates above.
\end{proof}

\section{Stability of K\"ahler metrics}
\label{stable}
In this section, we want to generalize Kodaira-Spencer's stability theory of K\"ahler metrics to ALE surfaces to the family of complex structures found above \cite{KSIII}. Specifically, we want to find a smooth family of K\"ahler forms on $(X, J_t)$ for $t$ small, where $J_t$ for $t \in  B_{\epsilon_1}\subset \H_{ess}(X,\Lambda^{0,1}\otimes\Theta)$ is the family of complex structures constructed in Section~\ref{Kuranishi}.
In a slight abuse of notation, the notation $X_t$ will stand for $(X,J_t)$. 
All the norms below in this section are based on the central metric $g_0$.
\begin{lemma}The integer-valued function 
 $\dim(\H_{\tau}(X_t,\Lambda^{p,q}))$ is upper semicontinuous at $t=0$, 
for any $\tau\in (-2,0)$.
\end{lemma}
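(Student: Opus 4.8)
The plan is to prove upper semicontinuity of $t \mapsto \dim \big( \H_\tau(X_t, \Lambda^{p,q}) \big)$ at $t=0$ by the standard elliptic-theory argument adapted to the ALE/weighted setting: one shows that the family of $\bar\partial$-Laplacians $\square_t$ acting on weighted spaces varies continuously in $t$, and that harmonic elements cannot suddenly appear in the limit. The central tool is the Fredholm theory on weighted H\"older (or weighted Sobolev) spaces. Since $\tau \in (-2,0)$ is a fixed non-integral weight avoiding all indicial roots of the Euclidean Laplacian on $(p,q)$-forms, the operator
\begin{align}
\square_t : C^{k,\alpha}_{\tau}(X_t, \Lambda^{p,q}) \to C^{k-2,\alpha}_{\tau-2}(X_t, \Lambda^{p,q})
\end{align}
is Fredholm for each small $t$, and its kernel is precisely $\H_\tau(X_t, \Lambda^{p,q})$ by elliptic regularity. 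The complex structures $J_t = J_0 + \phi(t)$ from Theorem \ref{kurthm} depend differentiably on $t$, and the induced metrics and their Laplacians likewise depend continuously on $t$ in the operator norm between the relevant weighted spaces; this is where I would invoke the smoothness/continuity estimates already established (e.g. the differentiable dependence of $\phi(t)$ and Lemma \ref{smooth}).

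\emph{First I would} set up the comparison at $t=0$ via the fundamental kernel/cokernel relation for Fredholm operators. Writing $k_t = \dim \ker \square_t$ and $c_t = \dim \operatorname{coker} \square_t$, the Fredholm index $k_t - c_t$ is locally constant in $t$ (since $\square_t$ is a continuous family of Fredholm operators), so it suffices to show that $c_t$ is \emph{lower} semicontinuous, equivalently that $\dim \operatorname{coker} \square_t \geq \dim \operatorname{coker} \square_0$ for $t$ near $0$; combined with constancy of the index this yields $k_t \leq k_0$, which is exactly upper semicontinuity of the kernel dimension. The standard way to see that the cokernel cannot jump down is to fix a right inverse (or parametrix) $G_0$ for $\square_0$ modulo its finite-dimensional cokernel, and perturb: for $t$ small, $\square_t$ composed with $G_0$ differs from the identity (plus a finite-rank projection) by a small operator, so surjectivity onto a fixed complement of the cokernel persists, forcing $c_t \leq c_0$; by the index identity this is the same statement. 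I would phrase the argument directly in terms of the kernel: if $k_t \to k' > k_0$ along a sequence $t_j \to 0$, pick $L^2$-orthonormal harmonic elements $\sigma_{t_j}^{(1)}, \dots, \sigma_{t_j}^{(k')}$, extract a limit using elliptic estimates (the weighted Schauder estimates give uniform $C^{k,\alpha}_\tau$ bounds from the $L^2$ normalization together with $\square_{t_j}\sigma = 0$), and obtain $k'$ linearly independent $\square_0$-harmonic limits, contradicting $\dim \H_\tau(X_0,\Lambda^{p,q}) = k_0$.

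The \textbf{main obstacle} will be controlling the limit at infinity on the noncompact ALE end: harmonic elements could in principle leak mass out to infinity as $t_j \to 0$, so the orthonormal sequence need not converge strongly, and the limit could be the zero section. This is precisely the noncompactness difficulty absent in Kodaira--Spencer's original compact argument. The remedy is the weighted elliptic theory: because $\tau$ avoids the indicial roots, any $\square_{t_j}$-harmonic $\sigma$ with $\sigma = O(r^\tau)$ admits a uniform weighted a priori estimate $\Vert \sigma \Vert_{C^{k,\alpha}_\tau} \leq C \Vert \sigma \Vert_{L^2}$ with $C$ uniform in $t_j$ (the uniformity follows because the indicial roots, hence the gap controlling the weighted estimate, depend only on the asymptotic Euclidean model, which is unchanged under the deformation). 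This uniform decay prevents mass escape and lets me pass to a nontrivial strong limit on compact sets together with uniform decay outside, so the $k'$ limiting sections remain $L^2$-orthonormal and hence linearly independent. With this uniform weighted estimate in hand the contradiction closes and upper semicontinuity follows.
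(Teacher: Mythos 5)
Your proposal is correct and takes essentially the same route as the paper: both reduce the lemma to the fact that the kernel dimension of a Fredholm operator between weighted H\"older spaces is upper semicontinuous under small operator-norm perturbations, combined with continuity of $t \mapsto \square_t$ in that operator norm. The only real difference is that the paper simply cites this abstract perturbation result (from Bartnik) and, using the K\"ahler identity $\square_0 = \tfrac{1}{2}\Delta_d$, compares $\square_t$ against the fixed Hodge Laplacian $\tfrac{1}{2}\Delta_d$, whereas you sketch a proof of the abstract fact yourself (parametrix perturbation, or direct extraction of limits of harmonic sequences with uniform weighted estimates to prevent escape of mass to infinity).

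One slip you should fix: the sentence claiming it suffices to show that $\dim\operatorname{coker}\square_t \geq \dim\operatorname{coker}\square_0$ (``lower semicontinuity'' of the cokernel) is backwards. Writing $k_t = \dim\ker\square_t$ and $c_t = \dim\operatorname{coker}\square_t$, constancy of the index gives $k_t - k_0 = c_t - c_0$, so $k_t \leq k_0$ is equivalent to $c_t \leq c_0$, i.e.\ to \emph{upper} semicontinuity of the cokernel. This is exactly what your right-inverse perturbation argument actually delivers (you correctly conclude ``forcing $c_t \leq c_0$''), so the mechanism is sound and the conclusion correct; only the framing sentence contradicts it. Your direct kernel argument is unaffected by this and stands on its own.
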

\begin{proof}
The Hodge Laplacian $\Delta_d = dd^*+d^*d$ is a Fredholm operator on $C^{k,\alpha}_{\tau}$ $(-2<\tau<0,k\geq 2)$. The following is proved in \cite{Bartnik}: If $P$ is a pseudo-differential operator such that $\Vert P-\frac{1}{2}\Delta_d\Vert _{op}<\epsilon$ for some $\epsilon$ sufficiently small, then $\ker(P)\leq \ker(\Delta_d)$, where $\Vert \cdot\Vert _{op}$ is the norm of operators from $C^{k,\alpha}_{\tau}$ to $C^{k-2,\alpha}_{\tau-2}$, which is defined as: 
\begin{align}
\Vert P\Vert _{op} = sup\{\Vert P(u)\Vert _{C^{k-2,\alpha}_{\tau-2}}: \Vert u\Vert _{C^{k,\alpha}_\tau}=1\}.
\end{align}
We may apply this to the operator $P = \square_t = \bar{\partial_t}^*\bar{\partial_t}+\bar{\partial_t}\bar{\partial_t}^*$, since we have $\Vert \square_t-\frac{1}{2}\Delta_d\Vert _{op}<\epsilon$ when $t\in \Delta^{d_1}_{\gamma}$ is small enough (when $t=0$, $\square_0 = \frac{1}{2}\Delta_d$ by the K\"ahler identities). Then
\begin{align}
\dim(\H_{\tau}(X_t,\Lambda^{p,q}))
\leq \dim(\H_{\tau}(X_0,\Lambda^{p,q})).
\end{align}
\end{proof}
Let $e^{\cdot, \cdot}_t$ denote the dimension of the space 
of $\square_t$-harmonic forms with decay rate of~$\tau$. 
Note that by a similar argument as in Proposition \ref{dcyprop}, 
any such harmonic form must decay like $O(r^{-3})$ as 
$r \rightarrow \infty$. 
So by \cite[Sections 8.4 and 8.9]{Joyce2000}, and Proposition \ref{p2.8}, we have
\begin{align}
b^{2,0}_0=b^{0,2}_0 = b^{1,0}_0=b^{0,1}_0=0.
\end{align}
From semi-continuity, it follows that 
\begin{align}
e^{2,0}_t=e^{0,2}_t=e^{1,0}_t=e^{0,1}_t=0.
\end{align}
The proof of the following theorem is inspired by \cite{BiquardRollin}.
\begin{theorem}
\label{orthogonal}
By choosing $\epsilon_1$ small enough, for the family of deformation $(X,J_t)$, 
$t \in B_{\epsilon_1}$, there exists a smooth family of K\"ahler forms ${\omega}_t$ on $X_t$, and 
\begin{align}
\label{ort}
\Vert {\omega}_t-\omega_0\Vert _{C^{k,\alpha}_\delta}\leq C\cdot \Vert t\Vert_{C^{k,\alpha}_{-3}} \leq C\cdot\epsilon_1
\end{align}
for some constant $C$, and some $k\geq 3, \delta\in (-2,-1)$. 

Furthermore, letting $\mathfrak{G}$ denote the group of holomorphic isometries of $(X,g_0,J_0)$, the mapping 
\begin{align}
\begin{split}
\aleph : B_{\epsilon_1} &\rightarrow \Lambda^{2}(T^*X)\\
& t \mapsto \omega_t
\end{split}
\end{align}
can be chosen equivariantly with respect to the action of $\mathfrak{G}$.
That is,  $ \aleph ( \iota^* t) = \iota^*  \aleph (t)$, for all 
$\iota \in \mathfrak{G}$.
\end{theorem}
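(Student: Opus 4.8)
The plan is to produce, for each small $t$, a real closed $2$-form $\omega_t$ that is of type $(1,1)$ with respect to $J_t$ and is $C^{k,\alpha}_{-3}$-close to $\omega_0$. Since positivity is an open condition and $\omega_0$ is positive, such an $\omega_t$ is automatically a K\"ahler form for $J_t$ once $\epsilon_1$ is small, and $g_t(\cdot,\cdot) = \omega_t(\cdot,J_t\cdot)$ is then the desired K\"ahler ALE metric. The obstruction to $\omega_0$ itself being $(1,1)$ for $J_t$ is its $(0,2)$-component $\beta_t$ with respect to $J_t$; since $\omega_0$ is of type $(1,1)$ for $J_0$ and $J_t = J_0 + \phi(t)$ with $\phi(t) = O(r^{-3})$, we have $\beta_t \in C^{k,\alpha}_{-3}(\Lambda^{0,2})$ with $\Vert \beta_t\Vert_{C^{k,\alpha}_{-3}}\leq C\Vert t\Vert_{C^{k,\alpha}_{-3}}$. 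The idea is to correct $\omega_0$ by an exact form: if one finds a $(0,1)$-form $\eta_t$ with $\bar\partial_t\eta_t = \beta_t$, then setting $\gamma_t = \eta_t + \bar\eta_t$ and $\omega_t = \omega_0 - d\gamma_t$, a type count shows that the $(0,2)$ and $(2,0)$ parts of $\omega_0$ are exactly cancelled, so $\omega_t$ is real, of type $(1,1)$ with respect to $J_t$, and closed.

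To solve $\bar\partial_t\eta_t = \beta_t$ I will use Hodge theory on $(X,J_t)$ with respect to a $J_t$-compatible Hermitian metric $h_t$ chosen close to $g_0$ and depending smoothly on $t$ (e.g.\ the $J_t$-Hermitian part of $g_0$). Because $\beta_t$ is automatically $\bar\partial_t$-closed (there are no $(0,3)$-forms on a surface), it suffices to invert $\square_t$ on $(0,2)$-forms. I claim $\square_t : C^{k+1,\alpha}_{-1}(\Lambda^{0,2}) \to C^{k-1,\alpha}_{-3}(\Lambda^{0,2})$ is an isomorphism for small $t$: the kernel is $\H_{-1}(X_t,\Lambda^{0,2})$, and by the weighted duality convention of Section \ref{Notationsec} the cokernel is identified with the kernel of $\square_t$ at the dual weight, which is again $\H_{-1}(X_t,\Lambda^{0,2})$; by Proposition \ref{p2.8} this vanishes for $J_0$, and by the semicontinuity lemma above it vanishes for all small $t$, i.e.\ $e^{0,2}_t = 0$. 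Setting $u_t = \square_t^{-1}\beta_t$ and $\eta_t = \bar\partial_t^* u_t$, and using that $\bar\partial_t u_t$ is a $(0,3)$-form hence zero, gives $\bar\partial_t\eta_t = \bar\partial_t\bar\partial_t^* u_t = \square_t u_t = \beta_t$. Tracking weights, $u_t = O(r^{-1})$ and $\eta_t = O(r^{-2})$, so $\omega_t - \omega_0 = -d\gamma_t \in C^{k,\alpha}_{-3}$, and boundedness of $\square_t^{-1}$ yields $\Vert \omega_t - \omega_0\Vert_{C^{k,\alpha}_{-3}} \leq C\Vert t\Vert_{C^{k,\alpha}_{-3}}$; since $-3 < \delta$, the same bound holds in the weaker $C^{k,\alpha}_\delta$-norm, giving \eqref{ort}.

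The main technical point, as in the fixed point arguments of the previous sections and following \cite{BiquardRollin}, is to make these estimates uniform in $t$ and to show that $t \mapsto \omega_t$ is smooth as a map into the relevant Banach space. First I would check that $\square_t$ depends smoothly on $t$ as a family of bounded operators $C^{k+1,\alpha}_{-1} \to C^{k-1,\alpha}_{-3}$: this holds because $J_t$, hence $h_t$ and all coefficients of $\square_t$, depend smoothly on $t$, and $\square_t - \square_0$ has decaying coefficients controlled by $\phi(t)$, so it defines a bounded operator between these weighted spaces of norm $O(\Vert t\Vert)$. Since $\square_0$ is invertible, a Neumann series argument then shows $\square_t^{-1}$ exists, is uniformly bounded, and is smooth in $t$ for small $t$; composing with the smoothly varying $\bar\partial_t^*$ and the smooth map $t\mapsto\beta_t$ shows $t\mapsto\omega_t$ is smooth with uniform constant $C$. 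Positivity of $\omega_t$ for small $\epsilon_1$ then follows from \eqref{ort}. This uniform invertibility and smooth dependence of $\square_t^{-1}$ across the family is the real obstacle; the correction and the equivariance below are formal once it is in hand.

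Finally, for the equivariance statement, I would arrange that the whole construction commutes with pullback by $\iota \in \mathfrak{G}$. Each $\iota$ is a holomorphic isometry of $(X,g_0,J_0)$, so $\iota^*\omega_0 = \omega_0$, and $\iota^* J_t = J_{\iota^* t}$: indeed $\iota^*\phi(t)$ again satisfies $\bar\partial_0^*(\iota^*\phi(t)) = 0$ and $\bar\partial(\iota^*\phi(t)) + [\iota^*\phi(t),\iota^*\phi(t)] = 0$ with harmonic leading term $\iota^* t$, so by the uniqueness in Theorem \ref{kurthm} we get $\iota^*\phi(t) = \phi(\iota^* t)$. Choosing the auxiliary metric equivariantly, $\iota^* h_t = h_{\iota^* t}$, the operators intertwine: $\iota^*\bar\partial_t = \bar\partial_{\iota^* t}\iota^*$, $\iota^*\bar\partial_t^* = \bar\partial_{\iota^* t}^*\iota^*$, $\iota^*\square_t = \square_{\iota^* t}\iota^*$, hence $\iota^*\square_t^{-1} = \square_{\iota^* t}^{-1}\iota^*$ and $\iota^*\beta_t = \beta_{\iota^* t}$. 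Therefore $\iota^*\gamma_t = \gamma_{\iota^* t}$, and since $\iota^*$ commutes with $d$ and fixes $\omega_0$, we conclude $\iota^*\omega_t = \omega_{\iota^* t}$, which is exactly $\aleph(\iota^* t) = \iota^*\aleph(t)$.
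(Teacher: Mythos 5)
Your proof is correct, but it takes a genuinely different route from the paper. The paper starts from the form $\omega'_t = \tfrac{\sqrt{-1}}{2}g_t(J_t\cdot,\cdot)$, $g_t = \tfrac12(g+g\circ J_t)$, which is of type $(1,1)$ for $J_t$ by construction but not closed, and then repairs closedness: it takes the $\square_t$-harmonic part $\omega^h_t$ (using a weighted Fredholm decomposition and a boundary-term argument to show $\square_t\omega'_t \perp \ker\square_t^*$), shows $\bar\partial_t\omega^h_t=0$ via $e^{0,1}_t = e^{2,1}_t=0$, writes $\partial_t\omega^h_t = \bar\partial_t a_t$ using a second decomposition on $\Lambda^{2,1}$, and finally adds a $\bar\partial_t$-exact term built from a solve on $\Lambda^{0,2}$. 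You instead start from $\omega_0$, which is closed and real but of the wrong type, and repair the type: you kill the $(0,2)_t$-component $\beta_t$ by subtracting $d(\eta_t+\bar\eta_t)$ with $\bar\partial_t\eta_t=\beta_t$, obtained from a single elliptic solve $u_t = \square_t^{-1}\beta_t$ on $(0,2)$-forms at weight $-1$ (using $\bar\partial_t u_t = 0$ since there are no $(0,3)$-forms). Both arguments rest on the same analytic inputs — weighted Fredholm theory, Proposition \ref{p2.8}, and the semicontinuity lemma giving $e^{0,2}_t=0$ — but yours needs only that one vanishing/invertibility statement rather than three, replaces the paper's two-stage harmonic decomposition by one solve, and yields the decay-$(-3)$ estimate (stronger than the stated $C^{k,\alpha}_\delta$ bound) directly. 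You also make explicit two points the paper leaves implicit: smooth dependence on $t$ (via Neumann series around the invertible $\square_0$) and the $\mathfrak{G}$-equivariance (via $\iota^*\phi(t)=\phi(\iota^*t)$ from the uniqueness clause of Theorem \ref{kurthm}, together with an equivariant choice of auxiliary metric). Two small points to tidy: the regularity indices are off by one as written — to conclude $\omega_t - \omega_0 \in C^{k,\alpha}_{-3}$ you should invert $\square_t : C^{k+2,\alpha}_{-1}(\Lambda^{0,2}) \to C^{k,\alpha}_{-3}(\Lambda^{0,2})$, which holds for the same reasons; and the Neumann-series comparison of $\square_t$ with $\square_0$ requires a fixed identification of the varying bundles $\Lambda^{0,2}_t$ with $\Lambda^{0,2}_0$ (e.g.\ by the type projection, which is an isomorphism for small $t$), a standard point also glossed over in the paper.
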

\begin{proof}
Let $g_t = \frac{1}{2}(g+g\circ J_t)$ be the Hermitian metric on $X_t$, where $g$ is the Riemannian metric of the central fiber and $J_t$ is the complex structure of $X_t$.
Let $\square_t = \bar{\partial_t}^*\bar{\partial_t}+\bar{\partial_t}\bar{\partial_t}^*$ be the $\bar{\partial_t}-$Laplacian defined with respect to $g_t$. Let $\omega'_t = \frac{\sqrt{-1}}{2}g_t(J_t\,,\,)$ be the corresponding $(1,1)$-form. We want to perturb $\omega'_t$ to acquire a K\"ahler form ${\omega}_t$. 
We have
$\omega'_t = \omega^h_t+\phi_t$, where $\omega^h_t$ is the $\square_t$-harmonic part of $\omega'_t$, which is asymptotic to $dz_t\wedge {d{\bar z_t}}$ at the rate of $-\delta$, with $\Vert \phi_t\Vert _{C^{k,\alpha}_{\delta}}\leq C\Vert\square_t\omega'_t\Vert_{C^{k-2,\alpha}_{\delta-2}}$. 
We next show that this claimed decomposition is valid. 
Consider the following Fredholm operator
\begin{align}
\square_t: C^{k,\alpha}_{\delta}(X_t,\Lambda^{1,1})\rightarrow C^{k-2,\alpha}_{\delta-2}(X_t,\Lambda^{1,1}).
\end{align}
The kernel $\ker\square_t^*\subset C^{k,\alpha}_{-2-\delta}(X_t,\Lambda^{1,1})$ is of finite dimension. For any $\sigma\in \ker\square_t^*$, $\sigma$ admits an expansion 
\begin{align}
\sigma = \sum_{k=1}^2 \frac{a_k}{r^2}dz_t\wedge d\bar{z}_t + O(r^{-3})
\end{align}
as $r\to\infty$, where $a_k$ are constants. Since
\begin{align}
\begin{split}
0 = \int_X \langle \sigma, \square_t\sigma \rangle_{g_t} dV_{g_t} = \lim_{R\to\infty} &\Big\{ \int_{B(R)} \big(\langle \bar\partial\sigma, \bar\partial\sigma \rangle_{g_t} +\langle \bar\partial^*\sigma, \bar\partial^*\sigma \rangle_{g_t} \big) dV_{g_t} \\
&+ \int_{S(R)} \big( \langle \sigma, \bar\partial r \lrcorner \bar\partial\sigma \rangle_{g_t}  + \langle \bar\partial^*\sigma, \bar\partial r \lrcorner \sigma \rangle_{g_t} \big) dA_{g_t}  \Big\}\\
&= \int_X \big( \langle \bar\partial\sigma, \bar\partial\sigma \rangle_{g_t}   +\langle \bar\partial^*\sigma, \bar\partial^*\sigma \rangle_{g_t} \big) dV_{g_t} .
\end{split}
\end{align}
This shows that $\bar\partial\sigma = 0$, $\bar\partial^*\sigma=0$. Since also $\bar\partial_t-\bar\partial_{Euc} = O(r^{-3})$, we have $\bar\partial_{Euc}\sigma = O(r^{-5})$. Then $a_k=0$ and $\sigma = O(r^{-3})$. This implies that $\square_t\omega'_t\in (\ker\square_t^*)^{\perp}\cap C^{k-2,\alpha}_{\delta-2}(X_t,\Lambda^{1,1})$, which is the $L^2$-complement of $\ker\square_t^*$ in $C^{k-2,\alpha}_{\delta-2}(X_t,\Lambda^{1,1})$. Then there exists a $\phi_t\in C^{k,\alpha}_\delta(X_t,\Lambda^{1,1})$ such that $\square_t\phi_t = \square_t\omega'_t$ and $\Vert\phi_t\Vert_{C^{k,\alpha}_\delta}\leq C\Vert\square_t\omega'_t\Vert_{C^{k-2,\alpha}_{\delta-2}}$. Then we can let $\omega^h_t = \omega'_t-\phi_t$ which is $\square_t$-harmonic. 

\begin{proposition}If 
$\square_t\omega_t^h=0$, then $\bar{\partial}_t \omega^h_t = 0$.
\end{proposition}
\begin{proof}
$\bar{\partial}_t\square_t\omega^h_t = \square_t\bar{\partial}_t\omega_t^h=0$, then $\bar{\partial}_t\omega^h_t$ is $\square_t$-harmonic, with decay rate of $-(\delta-1)$. However, $e^{2,1}_t=e^{0,1}_t=0$ by the upper semi-continuity and since the conjugate Hodge-star operator maps harmonic forms to 
harmonic forms. Then $\bar{\partial_t}\omega^h_t=0$.
\end{proof}
We want to find ${\omega}_t$ such that $d{\omega}_t=0$; for this we need a lemma.
\begin{lemma} We have $\partial_t\omega^h_t = \bar{\partial}_t a_t$, where $a_t\in C^{k,\alpha}_{\delta}(X_t,\Lambda^{2,0})$, $k\geq 3$. Furthermore, 
\begin{align}
\Vert a_t\Vert _{C^{k,\alpha}_{\delta}}\leq C\Vert \partial_t\omega^h_t\Vert _{C^{k-1,\alpha}_{\delta-1}} \leq C'\Vert t\Vert_{C^{k,\alpha}_\delta}
\end{align}
\end{lemma}
\begin{proof}First note that
\begin{align} 
\bar{\partial}_t\partial_t\omega^h_t = -\partial_t\bar{\partial}_t\omega^h_t=0.
\end{align}
Then
\begin{align}
\partial_t\omega^h_t\in C^{k-1,\alpha}_{\delta-1}(X_t,\Lambda^{2,1}) = \H_{\delta-1}(X_t,\Lambda^{2,1})\oplus\square_t C^{k+1,\alpha}_{\delta+1}(X_t,\Lambda^{2,1}).
\end{align}
(This decomposition follows since $\square_t$ is Fredholm on $C^{k,\alpha}_{\delta+1}(X_t,\Lambda^{2,1})$.)
Then 
\begin{align}
\partial_t\omega^h_t = h_t+(\bar\partial_t\bar\partial_t^*+\bar\partial_t^*\bar\partial_t)f_t
\end{align}
with $f_t\in C^{k+1,\alpha}_{\delta+1}(X_t,\Lambda^{2,1})$, where $h_t$ is the harmonic part. 
Since $\bar\partial_t\partial_t\omega_t^H=0$, it follows that $ \square_t \bar\partial_t f_t=0$. 
From injectivity of $\square_t$ on $C^{k, \alpha}_{\delta}$, we have $ \bar\partial_t f_t=0$. Then 
\begin{align}
\partial_t\omega^h_t=h_t + \bar\partial_t\bar\partial_t^* f_t = h_t+\bar\partial_t a_t,
\end{align}
where $a_t = \bar\partial_t^* f_t\in C^{k,\alpha}_{\delta}(X_t,\Lambda^{2,0})$.
Since $e^{2,1}_t = 0$, then $h_t = 0$, and $\partial_t\omega^h_t = \bar\partial_t a_t$.\\
Since $\dim ( \mathcal{H}_{\delta+1}(X_t,\Lambda^{2,1}))=0$,
\begin{align}
\square_t: C^{k+1,\alpha}_{\delta+1}(X_t,\Lambda^{2,1})\rightarrow C^{k-1,\alpha}_{\delta-1}(X_t,\Lambda^{2,1})
\end{align} 
is an isomorphism. Then $\Vert f_t\Vert _{C^{k+1,\alpha}_{\delta+1}}\leq C\Vert \partial_t\omega^h_t\Vert _{C^{k-1,\alpha}_{\delta-1}}$ implies that
\begin{align} 
\Vert a\Vert _{C^{k,\alpha}_{\delta}}\leq C\Vert \partial_t\omega^h_t\Vert _{C^{k-1,\alpha}_{\delta-1}} \leq C'\Vert t\Vert_{C^{k,\alpha}_\delta},
\end{align}
\end{proof}
Now we have $a_t\in C^{k,\alpha}_{\delta}(X_t,\Lambda^{2,0})$ and $\bar\partial_t \bar{a_t} = 0$. Let $\delta+2$ not be a indicial root of $\square_t$, then 
\begin{align}
\square_t: C^{k+2,\alpha}_{\delta+2}(X_t,\Lambda^{0,2})\rightarrow C^{k,\alpha}_{\delta}(X_t,\Lambda^{0,2})
\end{align}
is Fredholm. Since $\dim(\mathcal{H}_{-4-\delta}(X_t,\Lambda^{0,2}))=0$, it follows that $\square_t$ is surjective. 
We can therefore choose $f_t\in C^{k+2,\alpha}_{\delta+2}(X_t,\Lambda^{0,2})$ such that $\bar{a_t} = \square_t f_t = \bar\partial_t\bar\partial_t^* f_t$ with 
\begin{align}
\Vert f_t\Vert _{C^{k+2,\alpha}_{\delta+2}}\leq C\Vert \bar{a_t}\Vert _{C^{k,\alpha}_{\delta}} \leq C'\Vert t \Vert_{C^{k,\alpha}_\delta}.
\end{align}
Let $\bar{\beta_t} = \bar\partial_t^* f_t$, and let ${\omega}_t = \omega^h_t+\bar\partial_t\beta_t$. Then $d{\omega}_t = \partial_t \omega^h_t-\bar\partial_t a_t = 0$. Choose $\epsilon_1$ to be small enough, then ${\omega}_t$ is a closed positive (1,1)-form.
\end{proof}

\section{Deformations of scalar-flat K\"ahler metrics}
\label{DSFK}
Notice that the previous sections did not use the scalar-flat assumption; this section is where we start using it. 
\subsection{The linearized operator}
\label{implicit}
In previous sections, we have shown that there exists a family of decaying deformation of complex structure with dimension equal to $\dim(\H_{ess}(X,\Lambda^{0,1}\otimes\Theta))$, and there exists a smooth family of K\"ahler forms along the deformation. In this section, we will show that near an initial scalar-flat K\"ahler ALE metric $\omega_0$, there is a smooth family of scalar-flat K\"ahler ALE metrics under these deformations. The arguments in this section are inspired by \cite{LS93, LeBrun_Simanca}. 

Denote $S(\omega_0+\sqrt{-1}\partial\bar\partial f)$ as the scalar curvature of $X$ with metric $\omega_0+\sqrt{-1}\partial\bar\partial f$. We now consider $S$ as mapping between weighted H\"older spaces,
with some $0<\epsilon\ll 1$
\begin{align}
\begin{split}
S : C^{k,\alpha}_{\epsilon}(X)&\rightarrow C^{k-4,\alpha}_{\epsilon-4}(X)\\
 f &\mapsto S(\omega_0+\sqrt{-1}\partial\bar\partial f).
\end{split}
\end{align}
Its linearization and adjoint are maps between:
\begin{align}
&L: C^{k,\alpha}_{\epsilon}({X})\rightarrow C^{k-4,\alpha}_{\epsilon-4}({X})\\
&L^*: C^{k,\alpha}_{-\epsilon}({X})\rightarrow C^{k-4,\alpha}_{-4-\epsilon}({X}).
\end{align}
By direct calculation, 
\begin{align}
L(f) &:= D\,S(\omega_0+\sqrt{-1}\partial\bar\partial f) = -(\Delta^2 f + R_{i,\bar{j}}\nabla^i\nabla^{\bar{j}}f). 
\end{align}
Define $\bar\partial^{\#}f = g_0^{i,\bar{j}}\bar\partial_j f$.  Also by direct calculation,\begin{align}
(\bar\partial\bar\partial^{\#})^*(\bar\partial\bar\partial^{\#})f &= \Delta^2 f +R_{i,\bar{j}}\nabla^i\nabla^{\bar{j}}f+\nabla_k S\nabla^k f.
\end{align}
Clearly then, when $\omega_0$ is scalar-flat, $L(f) = - (\bar\partial\bar\partial^{\#})^*(\bar\partial\bar\partial^{\#})(f)$, see \cite{LeBrun_Simanca}.

The following lemma shows that the linearized mapping is surjective. This can also be interpreted as the nondegeneracy of the Futaki invariant, see \cite{FutakiLNIM} and \cite[Section~3.2]{LeBrun_Simanca}.  
\begin{lemma}
For  $0<\epsilon\ll 1$, $k \geq 4$, $L$ is surjective.
\end{lemma}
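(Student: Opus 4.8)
The plan is to show that $L$ is Fredholm with trivial cokernel. Since $L$ is elliptic of order four (its leading symbol is that of the bi-Laplacian $\Delta^2$ up to sign), standard weighted space theory \cite{LockhartMcOwen} shows that $L : C^{k,\alpha}_{\epsilon}(X) \to C^{k-4,\alpha}_{\epsilon-4}(X)$ is Fredholm provided $\epsilon$ is not an indicial root. The indicial roots are determined by the Euclidean model operator on $\RR^4/\Gamma$ and form a discrete set of integers; in particular $0$ is an indicial root, so I would fix $0 < \epsilon \ll 1$ small enough to avoid all of them, which guarantees Fredholmness. It then suffices to prove that the cokernel is trivial.

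Following the convention in the remark of Section \ref{Notationsec}, I would identify the cokernel of $L$ with the kernel of the adjoint $L^* : C^{k,\alpha}_{-\epsilon}(X) \to C^{k-4,\alpha}_{-4-\epsilon}(X)$, noting that by elliptic regularity any such kernel element is smooth. Since $\omega_0$ is scalar-flat, $L$ is formally self-adjoint and equals $-(\bar\partial\bar\partial^{\#})^*(\bar\partial\bar\partial^{\#})$, so a cokernel element is a real-valued function $f = O(r^{-\epsilon})$ satisfying $(\bar\partial\bar\partial^{\#})^*(\bar\partial\bar\partial^{\#}) f = 0$. Integrating by parts over $B(R)$ and applying Green's identity gives
\begin{align}
0 = \int_{B(R)} \langle (\bar\partial\bar\partial^{\#})^*(\bar\partial\bar\partial^{\#}) f, f \rangle \, dV = \int_{B(R)} |\bar\partial\bar\partial^{\#} f|^2 \, dV + \int_{S(R)} (\cdots) \, dA,
\end{align}
where the boundary integrand is a sum of terms of the schematic form $f \cdot \nabla^3 f$ and $\nabla f \cdot \nabla^2 f$. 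Since $f = O(r^{-\epsilon})$ with all derivatives decaying accordingly, each such term is $O(r^{-2\epsilon - 3})$, and integrating over the three-sphere $S(R)$ (whose volume grows like $R^3$) produces a contribution of size $O(R^{-2\epsilon}) \to 0$. Letting $R \to \infty$ then forces $\bar\partial\bar\partial^{\#} f = 0$.

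Finally, setting $W = \bar\partial^{\#} f$, the vanishing $\bar\partial W = 0$ shows that $W$ is a holomorphic vector field, and $W = O(r^{-\epsilon - 1})$ decays. By Proposition \ref{p2.5}, $\Z_\tau(X, \Theta) = 0$ for $\tau < 0$, so $W = \bar\partial^{\#} f = 0$, hence $\bar\partial f = 0$. Since $f$ is real-valued, this gives $\partial f = \overline{\bar\partial f} = 0$ as well, so $df = 0$ and $f$ is constant; being decaying, $f \equiv 0$. Thus $\ker L^* = 0$, the cokernel vanishes, and $L$ is surjective. I expect the main technical point to be verifying that the boundary terms in the integration by parts genuinely vanish under the slow decay $O(r^{-\epsilon})$: the dimension-four volume growth is exactly what makes the weight $\epsilon > 0$ sufficient, so no bootstrapping of the decay rate of $f$ is needed.
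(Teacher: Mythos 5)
Your proof is correct and follows essentially the same route as the paper's: Fredholmness via Lockhart--McOwen with integer indicial roots, the identification $L = -(\bar\partial\bar\partial^{\#})^*(\bar\partial\bar\partial^{\#})$ from scalar-flatness, integration by parts to get $\bar\partial\bar\partial^{\#}f=0$, Proposition \ref{p2.5} to kill the decaying holomorphic field $\bar\partial^{\#}f$, and realness plus decay to force $f\equiv 0$. The only difference is that you spell out the boundary-term estimate $O(R^{-2\epsilon})$ that the paper compresses into the observation that the integrand is $o(r^{-4})$, which is a welcome clarification rather than a deviation.
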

\begin{proof}
Obviously, $L$ is an elliptic operator, and the indicial roots of $L$ are 
the integers. Consequently, by standard
weighted space theory, $L$ is Fredholm since $\delta$ is non-integral~\cite{LockhartMcOwen}. 

Let $f\in \ker(L^*)$, then $f = O(r^{-\epsilon})$ and $(\bar\partial\bar\partial^{\#})^*(\bar\partial\bar\partial^{\#})f=0$. Since 
\begin{align}
f(\bar\partial\bar\partial^{\#})^*(\bar\partial\bar\partial^{\#})f = o(r^{-4}),
\end{align}
integrating by parts, 
It follows that $\bar\partial\bar\partial^{\#} f = 0$, so $\bar\partial^{\#} f$ is a holomorphic vector field 
on $X$ which has a decay rate of $-(\epsilon-1)$, and Proposition \ref{p2.5} then 
implies that $\bar\partial^{\#} f = 0$. 
Since $f$ is a real-valued function, $f$ must be constant; but since $f=O(r^{-\epsilon})$, we have $f \equiv 0$. 
We have shown that $\ker(L^*) = \{0\}$, so $L$ is surjective. 
\end{proof}

\begin{proposition}
\label{lker}
For $0 < \epsilon\ll 1$ and $k \geq 4$, $\ker(L)$ is spanned by the constants. 
\end{proposition}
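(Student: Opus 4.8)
The plan is to show that every $f\in\ker(L)$ is constant by establishing that $\bar\partial\bar\partial^{\#}f=0$ and then invoking Proposition \ref{p2.5}, exactly as in the surjectivity argument above. The new difficulty, compared with the kernel of $L^*$, is that an element of $\ker(L)$ is only required to \emph{grow} at most like $O(r^{\epsilon})$, so the boundary terms in the relevant integration by parts no longer obviously vanish, and one must first control the leading asymptotics of $f$.

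First I would record the asymptotic structure of a solution. Since $Lf=0$, with $L=-(\bar\partial\bar\partial^{\#})^*(\bar\partial\bar\partial^{\#})$ elliptic and equal near infinity to a nonzero multiple of the bi-Laplacian $\Delta_{Euc}^2$ plus faster-decaying terms, and since $f=O(r^{\epsilon})$ with $0<\epsilon<1$, standard weighted elliptic theory (the indicial roots being integers) yields an asymptotic expansion
\begin{align}
f = c_0 + c_1\log r + O(r^{-\gamma}),
\end{align}
as $r\to\infty$, for constants $c_0,c_1$ and some $\gamma>0$. The $\log r$ term must be allowed a priori, because $\lambda=0$ is a double indicial root of $\Delta^2_{Euc}$ in real dimension four (both $1$ and $\log r$ are biharmonic), while all homogeneous solutions of positive degree are excluded by the bound $f=O(r^{\epsilon})$ with $\epsilon<1$.

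The key step is to eliminate the logarithmic term. Integrating by parts on a large ball $B(R)$ and using $Lf=0$ gives, for every $R$, an identity of the form
\begin{align}
\int_{B(R)}|\bar\partial\bar\partial^{\#}f|^2\,dV = \int_{S(R)}\big(\,\text{boundary terms}\,\big)\,dA,
\end{align}
where the boundary terms are bilinear in $\{f,\nabla f\}$ and $\{\bar\partial\bar\partial^{\#}f,\nabla\bar\partial\bar\partial^{\#}f\}$. From the expansion, $\bar\partial\bar\partial^{\#}f=c_1\,\bar\partial\bar\partial^{\#}(\log r)+O(r^{-\gamma-2})$, and a direct computation shows $\bar\partial\bar\partial^{\#}(\log r)=O(r^{-2})$ with nonvanishing angular part. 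Counting powers of $r$ against the area growth $O(R^3)$, the right-hand boundary integral stays bounded as $R\to\infty$, whereas the left-hand integrand contributes $\|\text{leading part}\|_{L^2(S^3)}^2\,\log R$. Since the identity holds for all $R$, the coefficient of $\log R$ must vanish, which forces $c_1=0$. This is the crux of the argument, and the main obstacle: the elimination of the logarithm via the boundedness of the boundary integral against the logarithmic divergence of the interior $L^2$-norm.

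With $c_1=0$ I would set $\tilde f:=f-c_0=O(r^{-\gamma})$; since constants lie in $\ker(L)$, also $L\tilde f=0$. Now the boundary terms in the same integration by parts decay (each behaves like $O(R^{-2\gamma})$), so $\int_X|\bar\partial\bar\partial^{\#}\tilde f|^2\,dV=0$ and hence $\bar\partial\bar\partial^{\#}\tilde f=0$. Then $\bar\partial^{\#}\tilde f$ is a holomorphic vector field of decay rate $O(r^{-\gamma-1})$, so $\bar\partial^{\#}\tilde f=0$ by Proposition \ref{p2.5}; thus $\tilde f$ is holomorphic, real-valued, and decaying, whence $\tilde f\equiv 0$ and $f=c_0$. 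This shows $\ker(L)$ is spanned by the constants.
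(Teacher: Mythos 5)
Your identification of the logarithmic indicial solution as the real difficulty is correct, and the last step of your argument (once $f=c_0+O(r^{-\gamma})$, integrate by parts, get $\bar\partial\bar\partial^{\#}f=0$, invoke Proposition \ref{p2.5}, conclude $f=c_0$) is sound. But the crux of your proof, the elimination of $c_1$, contains a genuine gap: the boundary integral does \emph{not} stay bounded. The boundary terms produced by integrating $f\,(\bar\partial\bar\partial^{\#})^*(\bar\partial\bar\partial^{\#})f$ by parts over $B(R)$ are bilinear pairings of the schematic form $f\cdot\nabla(\bar\partial\bar\partial^{\#}f)$ and $\nabla f\cdot\bar\partial\bar\partial^{\#}f$. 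With $f\sim c_1\log r$, the first pairing has size $c_1^{2}\,\log R\cdot O(R^{-3})$, and against the area $O(R^{3})$ of $S(R)$ it contributes $c_1^{2}\,O(\log R)$ --- exactly the same growth as the interior integral $\int_{B(R)}|\bar\partial\bar\partial^{\#}f|^{2}\,dV$. So comparing growth rates yields no contradiction: in any valid identity the coefficients of $\log R$ on the two sides simply agree. Indeed, no argument localized at infinity can rule out the log term, because on the flat model $\CC^{2}/\Gamma$ the function $\log r$ is an \emph{exact} solution of the equation: in real dimension four $\Delta_{Euc}\log r=2r^{-2}$ and $r^{-2}$ is harmonic, so $\Delta^{2}_{Euc}\log r=0$, while the curvature term vanishes; for this solution both sides of your identity grow like $\log R$ and balance perfectly.

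Whether the ALE space carries a global kernel element asymptotic to $\log r$ is therefore a global question, and this is exactly what the paper's proof addresses, by a route you would need to adopt to close the gap. The paper applies the Lockhart--McOwen relative index theorem to get $N(L,\epsilon)-N(L,-\epsilon)=N(\Delta^{2}_{Euc},\epsilon)-N(\Delta^{2}_{Euc},-\epsilon)=2$, the jump of $2$ being precisely the dimension of the space $\{1,\log r\}$ of homogeneous solutions at the indicial root $0$. Combining this with the facts already established in the surjectivity lemma --- $\ker(L)\cap C^{k,\alpha}_{-\epsilon}(X)=\{0\}$ and the vanishing of the cokernel at weight $\epsilon$ (equivalently, by formal self-adjointness of $L=-(\bar\partial\bar\partial^{\#})^{*}(\bar\partial\bar\partial^{\#})$, the duality between kernels and cokernels at the weights $\pm\epsilon$) --- one gets $N(L,\epsilon)=\dim(\ker(L)\cap C^{k,\alpha}_{\epsilon})$ and $N(L,-\epsilon)=-\dim(\ker(L)\cap C^{k,\alpha}_{\epsilon})$, hence $\dim(\ker(L)\cap C^{k,\alpha}_{\epsilon})=1$. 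Since the constants lie in the kernel, they exhaust it; the nonexistence of a solution asymptotic to $\log r$ comes out as a consequence of this dimension count, not as an input obtainable from asymptotics and integration by parts alone.
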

\begin{proof}
The operator $L$ admits an expansion near infinity of the form 
$L = \Delta_{Euc}^2 + Q$, where $Q$ represents lower order terms. 
Let
\begin{align}
N(L,\tau) = \mathrm{Index}(L: C^{k,\alpha}_\tau(X)\rightarrow C^{k-4,\alpha}_{\tau-4}(X)).
\end{align} 
Then, by the relative index theorem of Lockhart-McOwen,  
\begin{align}
N(L,\epsilon) - N(L, -\epsilon) =  N(\Delta_{Euc}^2,\epsilon) -  
N(\Delta_{Euc}^2,-\epsilon)  = 2.
\end{align}
see \cite{LockhartMcOwen}. The above argument shows that $Ker(L) \cap C^{k,\alpha}_{-\epsilon}(X) = \{0\}$, which implies that
\begin{align}
\ker(L) \cap C^{k,\alpha}_{\epsilon}(X) = \{ \text{constants} \}.
\end{align}
\end{proof}

\subsection{Construction of $\mathfrak{F}$}
\label{construct}
Now we start the construction of the class $\mathfrak{F}$. First, we show a weighted version $\partial\bar\partial-$lemma on ALE surfaces. This is similar 
to \cite[Theorem 8.4.4]{Joyce2000}, but with slightly different assumptions. 
\begin{lemma}
\label{l4.2}
Consider the K\"ahler ALE surface $(X,J_0,g_0)$. Let $\zeta\in C^{k,\alpha}_{\delta}(X,\Lambda^{1,1})$ be a real $(1,1)$ form, 
for some $k\geq 2$ and $\delta\in (-2,-1)$, satisfying $d\zeta=0$. 
Then there exists $\theta\in C^{k+2,\alpha}_{\delta+2}(X)$ such that 
$\zeta= \zeta_h+\sqrt{-1}\partial\bar\partial\theta$, and $\Vert \theta \Vert_{C^{k+2,\alpha}_{\delta+2}}\leq C\cdot \Vert \zeta-\zeta_h\Vert_{C^{k,\alpha}_\delta}$, where $\zeta_h\in \H_{\delta}(X,\Lambda^{1,1})$.
\end{lemma}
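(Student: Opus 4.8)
The plan is to run weighted Hodge theory for the $\bar\partial$-Laplacian $\square$ together with the Kähler identities, exactly as in the compact $\partial\bar\partial$-lemma, and to use the vanishing results of Section~\ref{SDCS} (and of Section~\ref{stable}) to kill all the obstructions that the noncompactness could produce. First I would treat $\square$ as a Fredholm operator
\begin{align}
\square : C^{k+2,\alpha}_{\delta+2}(X,\Lambda^{1,1}) \rightarrow C^{k,\alpha}_{\delta}(X,\Lambda^{1,1}),
\end{align}
which is legitimate since $\delta+2\in(0,1)$ is non-integral. Its cokernel is identified with the space of $\square$-harmonic $(1,1)$-forms of weight $-4-\delta\in(-3,-2)$, and since $-4-\delta<\delta$ this space sits inside $\H_\delta(X,\Lambda^{1,1})$. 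I would therefore define $\zeta_h$ to be the $L^2$-projection of $\zeta$ onto this (real) harmonic space; then $\zeta-\zeta_h$ is $L^2$-orthogonal to the cokernel, so there is $\eta\in C^{k+2,\alpha}_{\delta+2}(X,\Lambda^{1,1})$ with $\square\eta=\zeta-\zeta_h$ and $\Vert\eta\Vert_{C^{k+2,\alpha}_{\delta+2}}\leq C\Vert\zeta-\zeta_h\Vert_{C^{k,\alpha}_\delta}$, and by construction $\zeta_h\in\H_\delta(X,\Lambda^{1,1})$ as required.

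Next I would use that $\zeta$ is real and $d$-closed, hence $\bar\partial\zeta=\partial\zeta=0$, while the harmonic form $\zeta_h$ is both $\bar\partial$- and $\bar\partial^*$-closed (integration by parts is justified because $-4-\delta<-2$ gives $L^2$ decay). The Kähler identities guarantee that the Green's operator $G$ of $\square$ commutes with $\partial$ and $\bar\partial$; applying this to $\eta=G(\zeta-\zeta_h)$ yields $\bar\partial\eta=G\,\bar\partial(\zeta-\zeta_h)=0$ and $\partial\eta=G\,\partial(\zeta-\zeta_h)=0$. Consequently $\zeta-\zeta_h=\square\eta=\bar\partial\bar\partial^*\eta=\bar\partial\psi$, where $\psi:=\bar\partial^*\eta\in C^{k+1,\alpha}_{\delta+1}(X,\Lambda^{1,0})$, and the Kähler identity $\partial\bar\partial^*=-\bar\partial^*\partial$ together with $\partial\eta=0$ gives $\partial\psi=-\bar\partial^*\partial\eta=0$. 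Thus $\psi$ is a $\partial$-closed $(1,0)$-form.

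To extract a potential I would pass to the conjugate: $\bar\psi$ is a decaying $\bar\partial$-closed $(0,1)$-form. Since the harmonic $(0,1)$-space vanishes in these weights (by Proposition~\ref{p2.8} together with $b^{0,1}_0=0$), the weighted Hodge decomposition collapses to $\bar\psi=\bar\partial g$ for a function $g\in C^{k+2,\alpha}_{\delta+2}(X)$ with the corresponding estimate. Conjugating gives $\psi=\partial h$ with $h=\bar g$, so
\begin{align}
\zeta-\zeta_h=\bar\partial\psi=\bar\partial\partial h=-\partial\bar\partial h.
\end{align}
Reality of $\zeta-\zeta_h$ forces $\partial\bar\partial(\mathrm{Re}\,h)=0$, whence $\zeta-\zeta_h=\sqrt{-1}\,\partial\bar\partial\theta$ with the real function $\theta:=-\mathrm{Im}\,h\in C^{k+2,\alpha}_{\delta+2}(X)$, and the desired estimate follows by chaining the bounded (right-)inverses used at each solving step.

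The main obstacle I expect is bookkeeping the obstructions and the integration-by-parts in the weighted noncompact setting: one must verify (i) that the cokernel-harmonic space for the first solve really lies in $\H_\delta$ and that the projection $\zeta_h$ is pairable with it (this is where the decay rate $-4-\delta<-2$ is used), (ii) that $G$ genuinely commutes with $\partial,\bar\partial$, i.e.\ that the boundary terms at infinity vanish for $\eta,\psi$ and for the harmonic forms, and (iii) that every intermediate harmonic space appearing ($\Lambda^{2,0}$, $\Lambda^{1,0}$, $\Lambda^{0,1}$) vanishes, so that no spurious cohomology blocks the reduction $\psi=\partial h$. Once these weighted vanishing facts from Section~\ref{SDCS} and Section~\ref{stable} are in hand, the argument is a routine adaptation of the compact $\partial\bar\partial$-lemma.
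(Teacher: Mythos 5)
Your proposal is essentially correct, and it reaches the lemma by a route that is structurally different from the paper's. The paper starts by asserting a weighted Hodge decomposition of the closed real form itself, $\zeta = \zeta_h + \partial\eta + \bar\partial\xi$ with $\xi = \bar\eta$ by reality, and then works at the level of the $(0,1)$-form $\eta$: it writes $\eta = \eta_h + \square\gamma$, uses the K\"ahler identities to arrange $\bar\partial\gamma = 0$, so that $\eta = \bar\partial\bar\partial^*\gamma$ and $\theta' = 2\,\mathrm{Im}(\bar\partial^*\gamma)$ is a potential; the estimate is then obtained by viewing $F = \sqrt{-1}\partial\bar\partial$ itself as a Fredholm map $C^{k+2,\alpha}_{\delta+2}(X) \rightarrow C^{k,\alpha}_\delta(X,\Lambda^{1,1})$ and identifying $\ker F^* = \ker\square^*$. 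You instead solve $\square\eta = \zeta - \zeta_h$ at the $(1,1)$-level, kill $\partial\eta$ and $\bar\partial\eta$, peel off $\psi = \bar\partial^*\eta$, solve $\bar\psi = \bar\partial g$, and take an imaginary part. Your route is more self-contained at the first step (the paper's opening decomposition is stated without proof), at the cost of needing vanishing of decaying $\square$-harmonic forms in bidegrees $(1,2)$ and $(2,1)$, not just the spaces you list: what you actually use is that $\bar\partial\eta$ and $\partial\eta$ are decaying harmonic $(1,2)$- and $(2,1)$-forms, which vanish via the conjugate Hodge star and $b^{1,0}_0 = b^{0,1}_0 = 0$ from Section \ref{stable} and Proposition \ref{p2.8}; the paper itself uses $e^{2,1}_t = e^{0,1}_t = 0$ inside the proof of Theorem \ref{orthogonal}, so these facts are available.

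Two of your justifications need repair, although the statements they support are true. First, pairability of $\zeta$ with the cokernel does not follow from $-4-\delta < -2$ alone: with $|\zeta| \sim r^{\delta}$ and $|\sigma| \sim r^{-4-\delta}$ the integrand of the pairing is only $O(r^{-4})$, which is logarithmically divergent against the volume growth $r^3\,dr$. What saves you is that elements of $\ker\square^*$ at weight $-4-\delta \in (-3,-2)$ automatically decay like $O(r^{-3})$, since the indicial roots are integers and there is none in $(-3,-2)$; this is the same expansion argument used for the cokernel in the proof of Theorem \ref{orthogonal}, and with $\sigma = O(r^{-3})$ the pairing converges because $\delta < -1$. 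Second, ``the Green's operator commutes with $\partial,\bar\partial$'' is not a black-box fact here: $\square$ has nontrivial kernel at weight $\delta+2 > 0$ (for instance $\omega_0$ itself is a bounded harmonic $(1,1)$-form), so there is no canonical Green's operator, and in the compact case the commutation is \emph{derived} from the orthogonal Hodge decomposition, which is exactly what is being established. The correct replacement is direct: $\square(\bar\partial\eta) = \bar\partial\square\eta = \bar\partial(\zeta - \zeta_h) = 0$, using $\bar\partial\zeta = 0$ (type decomposition of $d\zeta = 0$) and $\bar\partial\zeta_h = 0$ (your integration by parts, which is legitimate at this weight), so $\bar\partial\eta$ is a decaying harmonic $(1,2)$-form and hence vanishes, and similarly $\partial\eta = 0$. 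Since your own caveats (ii)--(iii) essentially call for exactly these verifications, the issues are ones of justification rather than strategy; with the two repairs your argument goes through and yields the stated estimate by chaining the bounded right inverses, as you indicate.
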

\begin{proof}
Since $\zeta$ is closed, can be decomposed as 
$\zeta= \zeta_h+\partial\eta+\bar\partial\xi$, where $\zeta_h\in \H_{\delta}(X,\Lambda^{1,1})$, $\eta\in C^{k+1,\alpha}_{\delta+1}(X,\Lambda^{0,1})$, and $\xi\in C^{k+1,\alpha}_{\delta+1}(X,\Lambda^{1,0})$. Since $\zeta$ is a real form, we can assume that $\eta = \overline{\xi}$. Consider the operator $\square$ and its Fredholm adjoint $\square^*$:
\begin{align}
\begin{split}
\square: C^{k+3,\alpha}_{\delta+3}(X,\Lambda^{0,1})&\rightarrow C^{k+1,\alpha}_{\delta+1}(X,\Lambda^{0,1})\\
\square^*: C^{k+3,\alpha}_{-3-\delta}(X,\Lambda^{0,1})&\rightarrow C^{k+1,\alpha}_{-\delta-5}(X,\Lambda^{0,1})
\end{split}
\end{align}
This implies that $\square$ has finite-dimensional cokernel. Then we have 
\begin{align}\eta = \eta_h+\square\gamma = \eta_h+\bar\partial\bar\partial^*\gamma+\bar\partial^*\bar\partial\gamma,\end{align}
where $\gamma\in C^{k+3,\alpha}_{\delta+3}(X,\Lambda^{0,1})$ and $\eta_h$ is the $\square$-harmonic part. Without loss of generality, assume $\eta_h=0$. Since $g_0$ is K\"ahler, $\partial\bar\partial^*(\bar\partial\gamma) = -\bar\partial\partial^*(\bar\partial\gamma) = 0$, so we can assume that $\bar\partial\gamma = 0$. Then $\zeta= \zeta_h+\sqrt{-1}\partial\bar\partial\theta'$, where $\theta' = 2 Im(\bar\partial^*\gamma)\in C^{k+2,\alpha}_{\delta+2}(X)$.

Now consider the Fredholm operator
\begin{align}
F = \sqrt{-1}\partial\bar\partial: C^{k+2,\alpha}_{\delta+2}(X)\rightarrow C^{k,\alpha}_\delta(X,\Lambda^{1,1})
\end{align}
Let $(\ker F^*)^{\perp}$ be the $L^2$-complement of $\ker F^*$ in $C^{k,\alpha}_\delta(X,\Lambda^{1,1})$. The argument above shows that $\ker F^* =\ker\square^*$. Then $\zeta-\zeta_h\in (\ker F^*)^{\perp}$. By the same argument as in the proof of Theorem \ref{orthogonal}, we can show that there exists a $\theta\in C^{k+2,\alpha}_{\delta+2}(X)$, such that $\sqrt{-1}\partial\bar\partial \theta = \zeta-\zeta_h$, and $\Vert \theta\Vert_{C^{k+2,\alpha}_{\delta+2}}\leq C\Vert \zeta-\zeta_h\Vert_{C^{k,\alpha}_\delta}$ for some constant $C$.
\end{proof}
\begin{theorem}
\label{t4.3}
Let $(X,J_0,g_0)$ be a scalar-flat K\"ahler ALE metric on a surface $X$. 
Then there is family $\mathfrak{F}$ of scalar-flat K\"ahler metrics near $g_0$, parametrized by $B$, that is, there is a differentiable mapping 
\begin{align}
F : B^1_{\epsilon_1} \times B^2_{\epsilon_2} \rightarrow \mathcal{M}(X), 
\end{align}
into the space of smooth Riemannian metrics $\mathcal{M}(X)$  
with $\mathfrak{F} = F(B^1_{\epsilon_1} \times B^2_{\epsilon_2})$.
Furthermore, letting $\mathfrak{G}$ denote the group of holomorphic isometries of $(X, J_0, g_0)$, the mapping $F$ is equivariant with respect to the action of $\mathfrak{G}$. 
That is, for $\iota \in \mathfrak{G}$,  and $(t, \rho) \in B^1_{\epsilon_1} \times B^2_{\epsilon_2}$, we have $F( \iota^* t, \iota^* \rho)
= \iota^* F(t, \rho)$.
\end{theorem}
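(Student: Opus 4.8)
The plan is to produce $F$ in two stages: first assemble, for each $(t,\rho)\in B^1\times B^2$, an explicit initial K\"ahler form on the deformed surface $X_t=(X,J_t)$ whose cohomology class is prescribed by $\rho$, and then correct it within its K\"ahler class to a scalar-flat metric by adding $\sqrt{-1}\partial_t\bar\partial_t f$ and solving the scalar curvature equation for the potential $f$. For the first stage, Theorem \ref{orthogonal} already supplies a smooth, $\mathfrak{G}$-equivariant family of K\"ahler forms $\omega_t$ on $X_t$ with $\|\omega_t-\omega_0\|_{C^{k,\alpha}_\delta}\leq C\|t\|$ as in \eqref{ort}. To incorporate the class parameter $\rho\in B^2\subset\H_{-3}(X,\Lambda^{1,1})$, I would take $\rho_t$ to be the $\square_t$-harmonic representative, with respect to $(g_t,J_t)$, of the fixed de Rham class $[\rho]\in H^2(X;\RR)$; since these harmonic forms represent $H^2(X;\RR)$ and $H^{2,0}=H^{0,2}=0$ by Proposition \ref{p2.8}, their dimension is the constant $b_2$, so the harmonic projection gives a smooth family with $\rho_0=\rho$, each $\rho_t$ closed and of type $(1,1)$ for $J_t$. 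Setting $\omega_{t,\rho}=\omega_t+\rho_t$ then gives a closed real $(1,1)_{J_t}$-form which is positive, hence K\"ahler on $X_t$, once $\epsilon_1,\epsilon_2$ are small.

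For the second stage, I would consider, on $X_t$, the map $f\mapsto S(\omega_{t,\rho}+\sqrt{-1}\partial_t\bar\partial_t f)$ with values in $C^{k-4,\alpha}_{\epsilon-4}(X)$, $0<\epsilon\ll 1$. It vanishes at $(t,\rho,f)=(0,0,0)$ because $\omega_0$ is scalar-flat, and its linearization in $f$ there is the operator $L$, which is surjective onto $C^{k-4,\alpha}_{\epsilon-4}(X)$ with kernel the constants (Proposition \ref{lker}). Fixing a closed complement $\mathcal{C}$ of the constants in $C^{k,\alpha}_\epsilon(X)$ turns $L|_{\mathcal{C}}$ into an isomorphism with a bounded inverse, so, provided $\|S(\omega_{t,\rho})\|_{C^{k-4,\alpha}_{\epsilon-4}}$ is small and the nonlinear remainder of the scalar curvature operator satisfies the quadratic bound \eqref{Q} in these weighted spaces, Lemma \ref{l2.4}(i) yields a unique small $f=f(t,\rho)\in\mathcal{C}$ solving $S(\omega_{t,\rho}+\sqrt{-1}\partial_t\bar\partial_t f)=0$, depending smoothly on $(t,\rho)$ via the implicit function theorem. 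Since the kernel of $L$ consists of constants, which do not change the metric, the scalar-flat metric so produced is in fact independent of the choice of $\mathcal{C}$. As $f\in C^{k,\alpha}_\epsilon$ the correction $\sqrt{-1}\partial_t\bar\partial_t f=O(r^{\epsilon-2})$ decays, so $\tilde\omega_{t,\rho}=\omega_{t,\rho}+\sqrt{-1}\partial_t\bar\partial_t f(t,\rho)$ remains ALE and positive, and elliptic regularity for the fourth-order scalar-flat equation makes it smooth; I would then set $F(t,\rho)=\tilde\omega_{t,\rho}(\,\cdot\,,J_t\,\cdot\,)$.

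Equivariance follows from the canonicity of each step. The Kuranishi family satisfies $J_{\iota^* t}=\iota^* J_t$ by the uniqueness in Theorem \ref{kurthm}; $\omega_t$ is $\mathfrak{G}$-equivariant by Theorem \ref{orthogonal}; and the harmonic projection defining $\rho_t$ commutes with the holomorphic isometry $\iota$, whence $\rho_{\iota^* t}=\iota^*\rho_t$ and $\omega_{\iota^* t,\iota^*\rho}=\iota^*\omega_{t,\rho}$. Because $\iota$ is a holomorphic isometry, $S(\iota^*\tilde\omega)=S(\tilde\omega)\circ\iota$ and $\iota^*$ intertwines $\partial_t\bar\partial_t$ with $\partial_{\iota^* t}\bar\partial_{\iota^* t}$, so $\iota^* f(t,\rho)$ solves the scalar-flat equation for the parameters $(\iota^* t,\iota^*\rho)$; by uniqueness modulo constants, $f(\iota^* t,\iota^*\rho)$ and $\iota^* f(t,\rho)$ differ by a constant, and since constants do not affect the metric we conclude $F(\iota^* t,\iota^*\rho)=\iota^* F(t,\rho)$.

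The main obstacle is the uniform solvability in the second stage. Surjectivity of $L$ is only known at the scalar-flat center, so it must be propagated to nearby parameters; this is an open condition, but it requires a right inverse bounded uniformly in $(t,\rho)$, and one must establish the weighted smallness $\|S(\omega_{t,\rho})\|_{C^{k-4,\alpha}_{\epsilon-4}}\to 0$, which rests on the $O(r^{-4})$ decay of scalar curvature for ALE K\"ahler metrics of order at least $2$, together with the quadratic estimate \eqref{Q} for the fully nonlinear scalar curvature operator in the weighted H\"older spaces. Keeping all of these constructions canonical enough that the final solution is genuinely $\mathfrak{G}$-equivariant, rather than only equivariant up to the kernel of $L$, is the other delicate point, and is exactly what the uniqueness modulo constants is used to settle.
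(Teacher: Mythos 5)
Your proposal is correct and shares the paper's overall architecture---the family $\omega_t$ of Theorem \ref{orthogonal} plus decaying harmonic $(1,1)$-forms for the K\"ahler-class parameter, followed by solving the scalar curvature equation for a potential using the surjectivity of $L$ with kernel the constants (Proposition \ref{lker})---but it organizes the final implicit-function step differently, and the comparison is instructive. For the class parameter, the paper does not take the $\square_t$-harmonic representative of a fixed de Rham class; it shows by semicontinuity that $\dim \H_{\delta}(X_t,\Lambda^{1,1}) = \dim H^2(X_t)$ is constant and then chooses a smooth family of isomorphisms $\psi_t : \H_{\delta}(X_0,\Lambda^{1,1}) \to \H_{\delta}(X_t,\Lambda^{1,1})$, asserting these can be taken $\mathfrak{G}$-equivariant; your harmonic projection is a canonical choice of such a $\psi_t$ and makes equivariance automatic, so this step is if anything cleaner, and it uses the same inputs (constancy of the dimension and $e^{2,0}_t = e^{0,2}_t = 0$). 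The substantive difference is at the second stage: rather than solving, for each fixed $(t,\rho)$, an equation in $f$ alone---which, as you note, forces you to propagate surjectivity of the linearization away from the scalar-flat center, produce right inverses bounded uniformly in $(t,\rho)$, and verify smallness of $\Vert S(\omega_{t,\rho})\Vert_{C^{k-4,\alpha}_{\epsilon-4}}$---the paper applies the fixed point Lemma \ref{l2.4}(ii) exactly once, at the origin, to the joint map $\mathcal{S}(t,\rho,f) = S(\omega(t,\rho)+\sqrt{-1}\partial_t\bar\partial_t f)$ on $B^1\times B^2\times C^{k,\alpha}_{\epsilon}(X)$. Its linearization $(\ast,\,-Ric_h,\,L)$ is surjective and Fredholm because $L$ alone is, $\mathcal{S}(0,0,0)=0$, and the quadratic estimate \eqref{Q} is verified directly from the curvature expansion \eqref{expform}; the zero set is then read off as a graph $(t,\rho)\mapsto f(t,\rho)$ over $B^1\times B^2$, so your self-identified ``main obstacle'' simply does not arise. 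Your route does work (openness of surjectivity and continuity of the linearized operators supply the uniform right inverse), but one supporting claim needs repair: the smallness of $\Vert S(\omega_{t,\rho})\Vert_{C^{k-4,\alpha}_{\epsilon-4}}$ cannot be deduced from ``$O(r^{-4})$ decay of scalar curvature for ALE K\"ahler metrics of order at least $2$,'' since $\omega_{t,\rho}$ is a priori ALE only of order $-\delta < 2$; the correct source of control is the expansion $S(g_0+h) = DS\cdot h + Q(h)$, in which $|Q(h)| \leq C r^{2\delta - 2}$ lies in the target weight because $\delta < -1$, while the linear term is acceptable because the parameter directions being varied ($t$, $\rho$, and $J_t - J_0$) decay like $r^{-3}$ rather than merely $r^{\delta}$. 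Finally, a small point: the paper does not settle smoothness of the resulting metrics by generic elliptic regularity but by the equation $\Delta Ric = Rm \ast Ric$ of Tian--Viaclovsky together with Moser iteration and bootstrapping in harmonic coordinates.
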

\begin{proof}
Recall that  
\begin{align}
B^1_{\epsilon_1} = B_{\epsilon_1}(0)\subset \H_{ess}(X_0,End_a(TX_0));\;B^2_{\epsilon_2} = B_{\epsilon_2}(0) \subset \H_\delta(X_0,\Lambda^{1,1})\simeq H^{1,1}(X_0).
\end{align}
Let $t$ denote the parameter of $B^1_{\epsilon_1}$. By Section \ref{stable}, $\dim(\H_\delta(X_t,\Lambda^{p,q}))$ (using the metric determined by $\omega_t$) is upper semicontinuous, 
and
\begin{align}
\begin{split}
\dim(\H_\delta(X_0,\Lambda^{1,1})) &= \dim(H^{1,1}(X_0))\\ 
\dim(\H_{\delta}(X_0,\Lambda^{2,0})) &= \dim(\H_\delta(X_0,\Lambda^{0,2})) = 0.
\end{split}
\end{align} 
So for $\epsilon_1$  sufficiently small,
\begin{align}
\dim(\H_\delta(X_t,\Lambda^{2,0})) = \dim(\H_\delta(X_t,\Lambda^{0,2})) = 0,
\end{align}
and also
\begin{align}
\begin{split}
\dim(\H_\delta(X_t,\Lambda^{2,0}))\geq \dim(H^{2,0}(X_t))\\
\dim(\H_\delta(X_t,\Lambda^{0,2}))\geq \dim(H^{0,2}(X_t)).
\end{split}
\end{align}
This implies that  $\dim(H^{2}(X_t)) = \dim(H^{1,1}(X_t))$ which is a topological invariant. Then
\begin{align} 
\begin{split}
\dim (H^2(X_t))\leq  \dim(\H_\delta(X_t,\Lambda^{1,1}))\leq \dim(\H_\delta(X_0,\Lambda^{1,1}))=\\
= \dim(H^2(X_0)) = \dim(H^2(X_t)).
\end{split}
\end{align} 
This implies that $\dim(\H_\delta(X_t,\Lambda^{1,1})) = \dim(H^{2}(X_t))$ is constant,
so choose a smooth family of isomorphisms $\psi_t$ which map 
$\H_\delta(X_0,\Lambda^{1,1})$ to $\H_\delta(X_t,\Lambda^{1,1})$ for $t$ sufficiently small.
Note that $\mathfrak{G}$ acts on $\H_\delta(X_0,\Lambda^{1,1})$,
and from Theorem \ref{orthogonal}, $\mathfrak{G}$ also acts on 
 $\H_\delta(X_t,\Lambda^{1,1})$. Clearly we can choose $\psi_t$ to 
be equivariant with respect to these actions. 

It has been shown in Section~\ref{stable} 
that there is a smooth family of K\"ahler forms $\omega_t$. Let $\rho\in B^2_{\epsilon_2}$, define 
\begin{align}
\label{omega}
\omega(t,\rho) = \omega_t+\psi_t(\rho)
\end{align}
Now consider the mapping
\begin{align}
\mathcal{S}: B^1_{\epsilon_1}\times B^2_{\epsilon_2}\times C^{k,\alpha}_{\epsilon}(X)\rightarrow C^{k-4,\alpha}_{\epsilon-4}(X),
\end{align}
defined by 
\begin{align}
\mathcal{S}: (t,\rho,f)\mapsto S(\omega(t,\rho)+\sqrt{-1}\partial_t\bar\partial_t f).
\end{align}
We endow the domain with the product norm, where $B^1_{\epsilon_1}$ and $B^2_{\epsilon_2}$ have 
the $L^2$-norm. By direct calculation, the linearization of $\mathcal{S}$ at $0$ is: 
\begin{align}
D \mathcal{S} = \begin{pmatrix} *, & -Ric_h, & L \end{pmatrix}
\end{align}
where $Ric_h$ is the harmonic part of Ricci form. Since $L$ is surjective as shown in Lemma \ref{l4.2}, $D\mathcal{S}$ is surjective by the lemma above. 
Next, we recall the expansion of the curvature tensor
\begin{align}
\label{expform}
Rm(g_0+h) = Rm(g_0)+(g_0+h)^{-1}*\nabla^2 h+ (g_0+h)^{-2}*\nabla h *\nabla h,
\end{align}
where $*$ denotes a various tensor contractions, and $h$ is a symmetric tensor
such that $g_0+h$ is a Riemannian metric \cite[Formula 3.40]{GurskyViaclovsky16}. 
In our case, $h$ can be written as 
\begin{align}
h( , ) = (\omega(t,\rho)+\sqrt{-1}\partial_t\bar\partial_t f)(-J_t,) -\omega(0,0)(-J_0,).
\end{align}
Next, by \eqref{ort}, we have the estimates
\begin{align}
\begin{split}
|h| &\leq C(|t|+|\rho|+|\nabla^2 f|) \leq C (r^{\delta}\Vert t\Vert_{C^{k,\alpha}_\delta}+r^{\delta}\Vert\rho\Vert_{C^{k,\alpha}_\delta}+r^{-2+\epsilon} \Vert f\Vert_{C^{k,\alpha}_\epsilon}) \\
|\nabla h| &\leq C(|\nabla t|+|\nabla \rho|+|\nabla^3 f|)\leq C (r^{-1+\delta}\Vert t\Vert_{C^{k,\alpha}_\delta}+r^{-1+\delta}\Vert\rho\Vert_{C^{k,\alpha}_\delta}+r^{-3+\epsilon}\Vert f\Vert_{C^{k,\alpha}_\epsilon}) \\
|\nabla^2 h| &\leq C(|\nabla^2 t|+|\nabla^2 \rho|+|\nabla^4 f|)\leq C (r^{-2+\delta}\Vert t\Vert_{C^{k,\alpha}_\delta}+r^{-2+\delta}\Vert\rho\Vert_{C^{k,\alpha}_\delta}+r^{-4+\epsilon}\Vert f\Vert_{C^{k,\alpha}_\epsilon}).
\end{split}
\end{align}
Then by \eqref{expform}, we have
\begin{align}
\begin{split}
S(g_0+h)-DS\cdot h = C_0&(g_0,h) * Rm(g_0)*h*h \\
&+ C_1(g_0,h)*h*\nabla^2 h + C_2(g_0,h)*\nabla h *\nabla h,
\end{split}
\end{align}
where $C_0(g_0,h), C_1(g_0,h),$ and $C_2(g_0,h)$ are bounded functions, when $h$ is sufficiently small. 
Without loss of generality, we can assume that $-2+\epsilon<\delta$. Then
\begin{align}
\begin{split}
|S(g_0+h)-DS\cdot h| &\leq C\{ r^{-2}|h|^2 + |h|\cdot|\nabla^2 h| + |\nabla h| \cdot |\nabla h|\}\\
&\leq C \cdot r^{-2+2\delta}\cdot (\Vert t\Vert_{C^{k,\alpha}_\delta}+\Vert \rho\Vert_{C^{k,\alpha}_\delta}+\Vert f\Vert_{C^{k,\alpha}_\epsilon})^2
\end{split}
\end{align}
which implies that 
\begin{align}
\Vert S(g_0+h)-DS\cdot h\Vert_{C^{k,\alpha}_{-4+\epsilon}} \leq C (\Vert t\Vert_{C^{k,\alpha}_\delta}+\Vert \rho\Vert_{C^{k,\alpha}_\delta}+\Vert f\Vert_{C^{k,\alpha}_\epsilon})^2.
\end{align}
Furthermore,
\begin{align}
\begin{split}
\Vert S(g_0+h)&-S(g_0+h')-DS\cdot (h-h')\Vert_{C^{k,\alpha}_{-4+\epsilon}} \leq \\
C &\cdot \big(\Vert t-t'\Vert_{C^{k,\alpha}_\delta}+\Vert \rho-\rho'\Vert_{C^{k,\alpha}_\delta}+\Vert f-f'\Vert_{C^{k,\alpha}_\epsilon}\big)\\
&\cdot \big(\Vert t\Vert_{C^{k,\alpha}_\delta}+\Vert \rho\Vert_{C^{k,\alpha}_\delta}+\Vert f\Vert_{C^{k,\alpha}_1}+\Vert t'\Vert_{C^{k,\alpha}_\delta}+\Vert \rho'\Vert_{C^{k,\alpha}_\delta}+\Vert f'\Vert_{C^{k,\alpha}_\epsilon} \big).
\end{split}
\end{align}
Since $B^1_{\epsilon_1}$ and $B^2_{\epsilon_2}$ are finite-dimensional, we can replace the corresponding 
norms by the $L^2$-norm. This shows that $\mathcal{S}$ satisfies the condition \eqref{Q}.
Then by Lemma \ref{l2.4}, the zero 
set of $\mathcal{S}$ is in one-to-one correspondence with the kernel of $\mathcal{S}'(0)$.
Since any solution of the nonlinear equation can be written 
as a kernel element plus a unique element in the image of a 
bounded right inverse for the linearized operator, 
the zero set of $\mathcal{S}$ can also be written as a graph over
$B^1_{\epsilon_1} \times B^2_{\epsilon_2}$. That is, for any $(t, \rho) \in B^1_{\epsilon_1} \times B^2_{\epsilon_2}$,
there exists a unique $f(t, \rho)$, up to constants, 
of
\begin{align}
\mathcal{S}(t, \rho, f(t,\rho) ) = 0,
\end{align} 
so a family of scalar-flat K\"ahler metrics over $B^1_{\epsilon_1} \times B^2_{\epsilon_2}$ can be constructed as 
\begin{align}
F: (t, \rho) \mapsto  g(t,\rho) = \Big( \omega(t,\rho) + \sqrt{-1} \partial_t \bar \partial_t f(t, \rho)\Big) (-J_t \cdot, \cdot),
\end{align}
where $g(0,0) = g_0$. It is a straightforward consequence of the implicit function theorem that the mapping $F$ is differentiable.
Then the image of $F$ gives us the family of scalar-flat K\"ahler metrics $\mathfrak{F}$, and the construction is clearly equivariant with respect to the action of~$\mathfrak{G}$. 

To finish the proof, we show that metrics in $\mathfrak{F}$ are smooth. 
By the result of \cite{TianViaclovsky}, any K\"ahler constant scalar 
curvature metric satisfies an equation of the form
\begin{align}
\Delta Ric = Rm * Ric,
\end{align}
where the right hand side denotes quadratic curvature contractions involving 
the full curvature tensor $Rm$ and the Ricci tensor. 
By a Moser iteration method and regularity bootstrap argument in harmonic coordinates,  it follows that $g$ is smooth, see \cite[Theorem 6.4]{TV2}.
\end{proof}

\section{Versality and uniqueness of the moduli space}
\label{vp}
In this section, all norms are defined based on the initial K\"ahler metric $g_0$.
We let $(g_1,J_1)$ be any scalar-flat K\"ahler metric satisfying $\Vert g_1-g_0\Vert _{C^{k,\alpha}_\delta}<\epsilon_3$, (for some $k\geq 4$, $\delta\in (-2,-1)$, and $\epsilon_3$ will be determined later). 
We begin with the following lemma. 
\begin{lemma}
\label{l6.1}
For $\epsilon_3$ sufficiently small, if $\Vert g_1-g_0\Vert _{C^{k,\alpha}_{\delta}}<\epsilon_3$, then there exists a diffeomorphism 
$\Phi_0: X \rightarrow X$ and constants $C_1, C_2$ depending upon $g_0, J_0$ so that 
\begin{align}
\label{goes}
\Vert \Phi^*_0 g_1-g_0\Vert _{C^{k,\alpha}_{\delta}}&<C_1 \epsilon_3\\
\label{j1es}
\Vert \Phi^*_0 J_1-J_0\Vert _{C^{k,\alpha}_\delta}&<C_2 \epsilon_3.
\end{align}
\end{lemma}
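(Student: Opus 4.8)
The strategy is to gauge-fix the asymptotics of $J_1$ and then exploit that both complex structures are parallel. Since $(g_1,J_1)$ is scalar-flat K\"ahler its order is at least $2$ by \cite{LeBrunMaskit}, and $\Vert g_1-g_0\Vert_{C^{k,\alpha}_\delta}<\epsilon_3$ together with $g_0-g_{Euc}=O(r^{-2})$ shows that $g_1$ is asymptotic to $g_{Euc}$ in the fixed ALE coordinates of $g_0$. By the Hein--LeBrun normal form recalled after the definition of ALE K\"ahler surface (and Proposition \ref{p1.4}), $J_1$ then converges at infinity to a constant $g_{Euc}$-orthogonal, $\Gamma$-invariant complex structure $J_\infty$ inducing the fixed orientation. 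The possible $J_\infty$ form the fixed-point set of $\Gamma\subset\U(2)$ acting on the twistor two-sphere, which is the whole sphere when $\Gamma\subset{\rm SU}(2)$ and equals $\{\pm J_{Euc}\}$ otherwise; in the latter case orientation and closeness to $J_0\to J_{Euc}$ force $J_\infty=J_{Euc}$. I would choose $\Phi_0$ to realign $J_\infty$ with $J_{Euc}$, taking $\Phi_0$ to be (a small correction of) an isometry of $g_0$ so that the metric estimate is not destroyed.

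Granting that $\Phi_0$ can be chosen this way, the metric estimate \eqref{goes} is immediate: if $\Phi_0$ is an isometry of $g_0$ then $\Phi_0^* g_0=g_0$, $\Phi_0$ preserves the weighted norm $\Vert\cdot\Vert_{C^{k,\alpha}_\delta}$, and
\begin{align*}
\Vert \Phi_0^* g_1-g_0\Vert_{C^{k,\alpha}_\delta}
=\Vert \Phi_0^*(g_1-g_0)\Vert_{C^{k,\alpha}_\delta}
=\Vert g_1-g_0\Vert_{C^{k,\alpha}_\delta}<\epsilon_3 .
\end{align*}
A genuine isometry is needed here rather than an asymptotic isometry of $g_{Euc}$, since a nontrivial rotation of the non-small tail $g_0-g_{Euc}=O(r^{-2})$ would otherwise contribute an $O(1)$ error to the $r^{-2}$ coefficient.

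For \eqref{j1es} I would use that K\"ahler metrics have parallel complex structure. Write $\tilde g=\Phi_0^* g_1$ and $\tilde J=\Phi_0^* J_1$, so that $\nabla^{\tilde g}\tilde J=0$, $\nabla^{g_0}J_0=0$, $\Vert\tilde g-g_0\Vert_{C^{k,\alpha}_\delta}<C_1\epsilon_3$, and, by the asymptotic alignment above, $\tilde J-J_0\to 0$ as $r\to\infty$. Expressing the difference of Levi-Civita connections as a Christoffel contraction,
\begin{align*}
\nabla^{g_0}(\tilde J-J_0)=\nabla^{g_0}\tilde J=(\nabla^{g_0}-\nabla^{\tilde g})\tilde J = \Xi*\tilde J, \qquad |\Xi|\le C\,|\nabla^{g_0}(\tilde g-g_0)|\le C\, r^{\delta-1}\epsilon_3 ,
\end{align*}
and since $|\tilde J|$ is bounded this gives $|\nabla^{g_0}(\tilde J-J_0)|\le C\, r^{\delta-1}\epsilon_3$. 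Integrating inward along radial geodesics from infinity, where $\tilde J-J_0$ vanishes, yields $|\tilde J-J_0|(r)\le C|\delta|^{-1} r^{\delta}\epsilon_3$, hence $\Vert \tilde J-J_0\Vert_{C^0_\delta}\le C_2\epsilon_3$; the higher-order parts of the norm follow by differentiating the relation $\nabla^{g_0}\tilde J=\Xi*\tilde J$ and using the $C^{k,\alpha}_\delta$-control on $\tilde g-g_0$. I expect the genuine difficulty to be the construction of $\Phi_0$ in the first paragraph---namely producing a single diffeomorphism that simultaneously matches the asymptotic complex structure and preserves closeness of the metric---since in the hyperk\"ahler case the asymptotic rotation is nontrivial and must be realized within the finite-dimensional isometry structure of $g_0$; once the asymptotics are matched, \eqref{j1es} is the soft consequence of parallelism described above.
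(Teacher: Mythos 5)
The half of your argument that establishes \eqref{j1es} is exactly the paper's own: both you and the paper use that $J_0$ and $\Phi_0^*J_1$ are parallel for $g_0$ and $\Phi_0^*g_1$ respectively, bound $\nabla^{g_0}\Phi_0^*J_1=(\nabla^{g_0}-\nabla^{\Phi_0^*g_1})\Phi_0^*J_1$ by the Christoffel difference $O(\epsilon_3 r^{\delta-1})$, and integrate along rays from infinity, where the difference tends to zero. The problem is that the real content of the lemma is the production of $\Phi_0$, and this you never do: you write ``Granting that $\Phi_0$ can be chosen this way'' and defer precisely the step that you yourself call ``the genuine difficulty.'' The paper does construct $\Phi_0$: it is a Euclidean rotation on $X\setminus B(p_0,2R)$ taking the constant limit $(J_1)_0$ to $J_{Euc}$, joined to the identity on $B(p_0,R)$ through a path of rotations on the annulus $A(R,2R)$, and the integration argument is then run for this explicit map. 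A proof that assumes the conclusion of this step is not a proof of the lemma.

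Moreover, the route you propose for the missing step---taking $\Phi_0$ to be a genuine isometry of $g_0$ realizing the asymptotic rotation---fails in exactly the hyperk\"ahler case you single out. For a generic Gibbons--Hawking or Kronheimer metric the isometry group is essentially the triholomorphic $S^1$ (no nontrivial rotation of $\RR^3$ preserves a generic monopole configuration), and this $S^1$ fixes each complex structure in the hyperk\"ahler two-sphere; hence no isometry of $g_0$ can rotate $J_\infty$ to $J_{Euc}$ when these differ. Your two supporting assertions are also incorrect. First, $-J_{Euc}$ induces the same orientation as $J_{Euc}$ and is $\Gamma$-invariant, so orientation does not eliminate $J_\infty=-J_{Euc}$ when $\Gamma\not\subset{\rm SU}(2)$: taking $g_1=g_0$ and $J_1=-J_0$ (the conjugate complex structure) realizes this case. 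Second, rotating the tail $g_0-g_{Euc}=O(r^{-2})$ outside a ball of radius $R$ contributes only $O(R^{-2-\delta})$ to the $C^{k,\alpha}_\delta$-norm, which tends to zero because $\delta>-2$; so your stated reason for demanding a genuine isometry rather than an asymptotic Euclidean rotation is not valid for these weights, and it is exactly this smallness that the paper's construction exploits. (The delicate point in any gluing construction is instead the distortion of $g_{Euc}$ across the transition region, which is of the order of the rotation angle---a different issue from the one you raise, and one that is harmless only when the required rotation is small.) If one wants to handle a large asymptotic rotation, the natural repair is not a better diffeomorphism at all, but replacing $J_1$ by another $g_1$-compatible complex structure whose limit is already $J_{Euc}$ (possible in the hyperk\"ahler and conjugate cases); neither this idea nor any substitute for it appears in your proposal.
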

\begin{proof}
Let $\nabla_0$ denote the covariant derivative of $g_0$, and $\Gamma_0, \Gamma_1$ denote the Christoffel symbols of $g_0, g_1$, respectively. Then $\Vert \Gamma_1-\Gamma_0\Vert _{C^{k-1,\alpha}_{\delta-1}}<C\epsilon_3$, so in particular $|\Gamma_1-\Gamma_0|< \frac{C \epsilon_3}{(1+r)^{-\delta+1}}$ as $r \rightarrow \infty$.

Since $(g_i,J_i)$ is K\"ahler, $\nabla_i J_i = 0$ for $i = 0 ,1$. We now 
estimate $|J_1-J_0|(p)$ along any geodesic ray $\gamma$ starting at $p_0$.
From \cite[Lemma 1.1]{HL15}, both $J_1$ and $J_0$ have a finite limit
along $\gamma$ as $t \rightarrow \infty$, and furthermore
\begin{align}
J_i &= (J_i)_0 + O(r^{-2}),
\end{align}
where $(J_i)_0$ is a constant complex structure on $\RR^4$ for $i = 0,1$. 
We are assuming that $(J_0)_0 = J_{Euc}$. 
Clearly, there exists a diffeomorphism $\Phi_0 : X \rightarrow X$ so that 
\begin{align}
\Phi^*_0J_1 = J_{Euc} + O(r^{-2}), 
\end{align}
as $r \rightarrow \infty$ such that \eqref{goes} is satisfied. 
This can be done, for example, by connecting a rotation defined on the 
complement of a large ball $X \setminus B(p_0, 2R)$ 
to the identity transformation on $B(p_0,R)$ by smooth path 
of rotations of each sphere on the annulus $A(R,2R)$. 
We then have
\begin{align}
\begin{split}
|\Phi^*_0 J_1-J_0|(p) &= |\Phi^*_0 J_1-J_0|(p) - \lim_{t \rightarrow \infty}|
\Phi^*_0 J_1 - J_0|(\gamma(t))\\ 
& \leq \int_{s=r}^\infty |\nabla_0(\Phi^*_0 J_1-J_0)(\gamma(s)) |ds\\
&\leq \int_{s=r}^\infty |\nabla_0 \Phi^*_0 J_1(\gamma(s))|ds\\
&<\int_{s=r}^\infty\frac{C\cdot \epsilon_3}{(1+s)^{-\delta+1}}ds\leq \frac{C'\cdot\epsilon_3}{(1+r)^{-\delta}}.
\end{split}
\end{align}
Since this estimate is true along any ray, it 
follows that $\Vert \Phi^*_0 J_1-J_0\Vert _{C^{0}_\delta}<C' \epsilon_3$. 
We can estimate the higher regularities in the same way, and \eqref{j1es} follows.
\end{proof}
We next prove the ``versality'' of our family $\mathfrak{F}$. That is, for $g_1$ as above, there exists a diffeomorphism $\Phi$ such that $\Phi^*g_1$ is in the class $\mathfrak{F}$.
\begin{theorem}
\label{gthm}
 Let $-2 < \delta < -1$ be fixed.  
There exists an $\epsilon_3>0$ such that for any scalar-flat K\"ahler metric $(g_1,J_1)$ satisfying $\Vert g_1-g_0\Vert _{C^{k,\alpha}_\delta}\leq\epsilon_3$, there exists a diffeomorphism $\Phi: X \rightarrow X$  of the form $\Phi_0 \circ \Phi_{Y_1} \circ \Upsilon_Z \circ \Phi_{Y_2}$ where $\Phi_0$ is as in Lemma~\ref{l6.1}, $Y_1,Y_2 \in C^{k+1,\alpha}_{\delta+1}(TX)$, and $Z \in \WW$,
such that $\Phi^*g_1 \in \mathfrak{F}$.
Furthermore, there exists a constant $C$ so that
\begin{align}
\label{metest}
\Vert \Phi^*g_1-g_0\Vert_{C^{k,\alpha}_\delta}\leq C\epsilon_3.
\end{align}
\end{theorem}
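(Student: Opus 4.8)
The plan is to build $\Phi$ as a composition of three successive gauge fixings—normalize the complex structure at infinity, slide it into the Kuranishi slice, then match the K\"ahler class and appeal to uniqueness of the scalar-flat representative—propagating the smallness bound at every stage. First I would apply Lemma~\ref{l6.1} to obtain $\Phi_0$ with $\Vert \Phi_0^* g_1 - g_0\Vert_{C^{k,\alpha}_\delta} < C_1\epsilon_3$ and $\Vert \Phi_0^* J_1 - J_0\Vert_{C^{k,\alpha}_\delta} < C_2\epsilon_3$, so that $\Phi_0^* J_1 = J_0 + \phi$ with $\phi$ small in $C^{k,\alpha}_\delta$ and asymptotic to $J_{Euc}$. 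I would then feed $\Phi_0^* J_1$ into the essential gauging Theorem~\ref{t5.2}, producing $\Phi' = \Phi_{Y_1}\circ\Phi_Z\circ\Phi_{Y_2}$ with $Y_1, Y_2 \in C^{k+1,\alpha}_{\delta+1}(TX)$ and $Z\in\WW$ such that $(\Phi')^*\Phi_0^* J_1 - J_0$ is divergence-free, is $L^2$-orthogonal to $\VV$, and is bounded by $C\epsilon_3$ in $C^{k,\alpha}_{-3}$ via \eqref{closej}. The divergence-free condition together with Theorem~\ref{kurthm} places this complex structure in the Kuranishi family $J_t$, while the $L^2$-orthogonality to $\VV$ confines the parameter to $\H_{ess}$; hence $(\Phi')^*\Phi_0^* J_1 = J_t$ for a unique $t \in B^1 = B_{\epsilon_1}\subset\H_{ess}(X,End_a(TX))$, with $\Vert t\Vert$ controlled by $\epsilon_3$. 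Setting $\Phi = \Phi_0\circ\Phi'$, we have arranged $\Phi^* J_1 = J_t$.

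The metric estimate \eqref{metest} would follow by tracking norms across the composition $\Phi^* g_1 = (\Phi')^*(\Phi_0^* g_1)$: the $C^{k,\alpha}_\delta$ bound from Lemma~\ref{l6.1} is preserved up to a constant under $(\Phi')^*$, using the expansion ${\Phi_Y}_* g_0 = g_0 + h$ with $\Vert h\Vert \leq C\Vert Y\Vert$ from Lemma~\ref{smooth} for the $\Phi_{Y_i}$ factors and the estimate \eqref{wl2} of Lemma~\ref{W} for the $\Phi_Z$ factor, since $\Vert Y_1\Vert, \Vert Y_2\Vert, \Vert Z\Vert$ are all controlled by $\epsilon_3$. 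With $\Phi^* J_1 = J_t$ fixed, the metric $\Phi^* g_1$ is scalar-flat K\"ahler with respect to $J_t$, and its K\"ahler form $\omega_1 = \Phi^* g_1(J_t\cdot,\cdot)$ is a closed positive $(1,1)$-form for $J_t$.

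Next I would match the K\"ahler class. Since $\psi_t$ maps $\H_\delta(X_0,\Lambda^{1,1})$ isomorphically onto $\H_\delta(X_t,\Lambda^{1,1}) \simeq H^{1,1}(X_t) = H^2(X_t)$, the classes $[\omega(t,\rho)] = [\omega_t] + [\psi_t(\rho)]$ fill out a neighborhood of $[\omega_t]$ in $H^2(X_t)$ as $\rho$ ranges over $B^2$; because \eqref{metest} forces $[\omega_1]$ to lie within $C\epsilon_3$ of $[\omega_0]$, there is a unique $\rho\in B^2$ with $[\omega(t,\rho)] = [\omega_1]$. Then $\omega_1 - \omega(t,\rho)$ is an exact real $(1,1)$-form for $J_t$, so by the weighted $\partial\bar\partial$-lemma (Lemma~\ref{l4.2}) its harmonic part vanishes and $\omega_1 = \omega(t,\rho) + \sqrt{-1}\,\partial_t\bar\partial_t\theta$ for a potential $\theta$, determined up to a constant, with $\Vert\theta\Vert$ controlled by $\Vert\omega_1 - \omega(t,\rho)\Vert_{C^{k,\alpha}_\delta} \leq C\epsilon_3$.

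Finally I would invoke uniqueness from the construction of $\mathfrak{F}$. Both $\theta$ and the potential $f(t,\rho)$ of Theorem~\ref{t4.3} solve the scalar-flat equation $\mathcal{S}(t,\rho,\cdot) = 0$, and both are small in the relevant weighted norm; since Lemma~\ref{l2.4} identifies the zero set of $\mathcal{S}(t,\rho,\cdot)$ near $0$ with $\ker(L)$, which by Proposition~\ref{lker} consists only of constants, the two potentials agree modulo a constant. Hence $\omega_1 = \omega(t,\rho) + \sqrt{-1}\,\partial_t\bar\partial_t f(t,\rho)$, i.e. $\Phi^* g_1 = g(t,\rho) \in \mathfrak{F}$. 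The main obstacle I anticipate is the weighted-norm bookkeeping in this last step: Lemma~\ref{l4.2} naturally delivers $\theta \in C^{k+2,\alpha}_{\delta+2}$ with $\delta+2 \in (0,1)$, whereas the scalar-flat slice of Theorem~\ref{t4.3} is set up in $C^{k,\alpha}_\epsilon$ with $0 < \epsilon \ll 1$, so reconciling these requires using that $\theta$ is only determined up to a constant and that what enters the equation is $\sqrt{-1}\,\partial_t\bar\partial_t\theta$, which decays at rate $\delta$, so that the uniqueness clause of Lemma~\ref{l2.4} genuinely applies after normalizing the constant. Some care is also needed to arrange the chain of smallness thresholds consistently, shrinking $\epsilon_3$ so that all intermediate objects stay within the radii $\epsilon_1'$, $\epsilon_1$, $\epsilon_2$ demanded by the earlier theorems.
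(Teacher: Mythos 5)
Your proposal follows essentially the same route as the paper's proof: Lemma~\ref{l6.1} followed by Theorem~\ref{t5.2} to gauge the complex structure into the essential Kuranishi slice, norm-tracking via Lemma~\ref{smooth} and \eqref{wl2} for the metric estimate \eqref{metest}, then matching the K\"ahler class using the local constancy of $\dim H^{1,1}(X_t)$, applying Lemma~\ref{l4.2}, and concluding by the uniqueness of the scalar-flat potential established in the proof of Theorem~\ref{t4.3}. Your treatment of that final uniqueness step (via Lemma~\ref{l2.4} and Proposition~\ref{lker}, including the weight mismatch between $C^{k+2,\alpha}_{\delta+2}$ and $C^{k,\alpha}_{\epsilon}$) is in fact more explicit than the paper's, which simply cites the earlier proof.
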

\begin{proof}
Let $\Phi_0$ denote the diffeomorphism from Lemma \ref{l6.1}.
Then $\Phi_0^* J_1$ satisfies the assumptions of Theorem \ref{t5.2}, so 
there exists a diffeomorphism $\tilde{\Phi}$ satisyfing the properties stated in that theorem.
In particular, $\tilde{\Phi}$ is of the form $\tilde{\Phi} = \Phi_{Y_1} \circ \Upsilon_Z
\circ \Phi_{Y_2}$, where $Y_1,Y_2 \in C^{k+1,\alpha}_{\delta+1}(TX)$,
and $Z \in \WW$. The estimate \eqref{metest} for $\Phi_0 \circ \tilde{\Phi}$ 
is then proved as follows.
For $\Phi_{Y_1}$, we estimate 
\begin{align}
\label{phes}
\Vert \Phi^*_{Y_1} \Phi_0^* g_1 - g_0 \Vert_{C^{k,\alpha}_\delta}
\leq \Vert \Phi^*_{Y_1} \Phi_0^* g_1 - \Phi_0^* g_1 \Vert_{C^{k,\alpha}_\delta} + \Vert \Phi_0^* g_1 - g_0 \Vert_{C^{k,\alpha}_\delta}
\leq  C \epsilon_3.
\end{align}
This estimate holds since $\Phi^*_0 g_1$ is also a scalar-flat 
K\"ahler metric which is smooth by the last
observation in the proof of Theorem \ref{t4.3},
so when $\epsilon_3$ is sufficiently small, 
Lemma~\ref{smooth} holds for $g_1$.
The same argument applies to $\Phi_{Y_2}$. 
The estimate~\eqref{metest} then follows from \eqref{phes}, \eqref{wl2} and Lemma \ref{l6.1}.

Let $\omega_1$ denote the K\"ahler form of $\Phi^*g_1$.
It was shown in Section \ref{DSFK} that $\dim(H^{1,1}(X_t))$ is locally constant, which implies $H^{1,1}(X_0)\simeq H^{1,1}(X_t)$ for any $t\in B^1$. Then there exists a $\rho\in \H_{\delta}(X_0,\Lambda^{1,1})$ such that $[\omega(\phi_{ess},\rho)] = [\omega_1]\in H^{1,1}(X_t)$, where $\omega(\phi_{ess},\rho)$ is defined in \eqref{omega}. 
Let $g' = F(\phi_{ess},\rho)$, and let $\omega'$ denote the corresponding K\"ahler form. 
Then by Lemma \ref{l4.2}, $\omega'-\omega_1 = \sqrt{-1}\bar\partial_t\bar\partial_t^* f$ for some potential function $f\in C^{k+2,\alpha}_{\delta+2}(X)$, and $\Vert f\Vert_{C^{k+2,\alpha}_{\delta+2}}\leq C\Vert \omega'-\omega_1 \Vert_{C^{k,\alpha}_{\delta}}$.

As shown in the proof of Theorem \ref{t4.3}, in a neighborhood of $g_0$, a scalar-flat K\"ahler metric is uniquely determined by the K\"ahler class and complex structure. This implies that $g' = g_1$. 
\end{proof}
This completes the proof of Theorem \ref{tmain}. Next, we will complete the 
proof of Theorem \ref{t2}. 
From Theorem \ref{t4.3}, the mapping $F$ is $\mathfrak{G}$-equivariant. 
That is, for $\iota \in \mathfrak{G}$,  and $(t, \rho) \in B^1_{\epsilon_1} \times B^2_{\epsilon_2}$, we have $F( \iota^* t, \iota^* \rho)
= \iota^* F(t, \rho)$.
This clearly implies that two elements in the same 
orbit of $\mathfrak{G}$ are isometric, which proves the 
first statement in Theorem \ref{t2}.

For the second statement in Theorem \ref{t2}, we need the following lemma, 
which says that in the non-hyperk\"ahler case, the dimension of $\mathfrak{F}$ 
is the same as the dimension of the parameter space. 
\begin{lemma}
\label{non-hyperKahler}
Let $(X,g,J)$ be as above. If $g$ is not hyperk\"ahler then
$F$ is injective, so $\dim(\mathfrak{F}) = d = \dim(B^1_{\epsilon_1} \times B^2_{\epsilon_2})$.  
\end{lemma}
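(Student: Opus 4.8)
The plan is to show that $F(t,\rho)=F(t',\rho')$ forces $(t,\rho)=(t',\rho')$, by first recovering the complex structure from the metric and then recovering the K\"ahler class.

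First I would record an openness observation that is needed to upgrade the hypothesis on $g$ to a statement about the whole family. Since $g_0$ is not hyperk\"ahler, and in complex dimension two a K\"ahler metric is hyperk\"ahler if and only if it is Ricci-flat, we have $Ric(g_0)\not\equiv 0$. As $F$ is differentiable and $Ric$ depends continuously on the metric as a map into $C^{k-2,\alpha}_{\delta-2}$, after shrinking $\epsilon_1,\epsilon_2$ every metric $g=F(t,\rho)$ in $\mathfrak{F}$ satisfies $Ric(g)\not\equiv 0$ and is therefore not hyperk\"ahler. By construction $g=F(t,\rho)$ is K\"ahler with respect to $J_t$, with K\"ahler form $\omega(t,\rho)+\sqrt{-1}\partial_t\bar\partial_t f(t,\rho)$, and $J_t=J_0+\phi(t)$ is $C^{k,\alpha}_{-3}$-close to $J_0$ by Theorem \ref{kurthm}; the same holds for $J_{t'}$.

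The crux is the rigidity statement that a non-hyperk\"ahler K\"ahler surface metric determines its complex structure up to sign. Suppose $g:=F(t,\rho)=F(t',\rho')$. Then $g$ is K\"ahler for both $J_t$ and $J_{t'}$, so the forms $\omega_{J_t}=g(J_t\cdot,\cdot)$ and $\omega_{J_{t'}}=g(J_{t'}\cdot,\cdot)$ are both $g$-parallel self-dual $2$-forms. I would argue that the space of $g$-parallel self-dual $2$-forms is one-dimensional: any parallel self-dual form orthogonal to $\omega_{J_t}$ is a parallel real form of type $(2,0)+(0,2)$, whose $(2,0)$-part is then a parallel, hence nonvanishing and constant-norm, holomorphic section of $\Lambda^{2,0}$; such a section trivializes the canonical bundle of $X_t$ and forces the holonomy into $SU(2)$, i.e.\ $g$ hyperk\"ahler, contradicting the previous paragraph. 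Hence $\omega_{J_{t'}}=c\,\omega_{J_t}$ for a constant $c$; comparing the (constant) pointwise norms gives $c=\pm1$, and since both forms are $C^0$-close to $\omega_0$ for $t,t'$ small we must have $c=+1$. Therefore $\omega_{J_t}=\omega_{J_{t'}}$, so $J_t=J_{t'}$. Finally, the Kuranishi map $t\mapsto J_t=J_0+\phi(t)$, with $\phi(t)=t+\phi(t)^{\perp}$ and $\phi(t)^{\perp}$ $L^2$-orthogonal to $\H_{-3}(X,\Lambda^{0,1}\otimes\Theta)$, is injective on $B^1\subset\H_{ess}\subset\H_{-3}$ (project $\phi(t)=\phi(t')$ onto $\H_{-3}$), so $J_t=J_{t'}$ yields $t=t'$.

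For the second step, fix $t=t'$. Since $g$ is K\"ahler with respect to the common complex structure $J_t$, the two K\"ahler forms coincide, $\omega(t,\rho)+\sqrt{-1}\partial_t\bar\partial_t f(t,\rho)=g(J_t\cdot,\cdot)=\omega(t,\rho')+\sqrt{-1}\partial_t\bar\partial_t f(t,\rho')$. Passing to cohomology in $H^{1,1}(X_t)$ and using $\omega(t,\rho)=\omega_t+\psi_t(\rho)$ with the same $\omega_t$, I obtain $[\psi_t(\rho)]=[\psi_t(\rho')]$. Because $\psi_t$ maps isomorphically onto $\H_\delta(X_t,\Lambda^{1,1})$ and the period map $\H_\delta(X_t,\Lambda^{1,1})\simeq H^{1,1}(X_t)$ of Section \ref{construct} is injective, this gives $\psi_t(\rho)=\psi_t(\rho')$ and hence $\rho=\rho'$. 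Thus $F$ is injective and $\dim(\mathfrak{F})=\dim(B^1\times B^2)=j+b_2=d$. The hard part is the rigidity in the third paragraph: recovering $J_t$ from the metric and ensuring non-hyperk\"ahlerity persists across the family. The hyperk\"ahler hypothesis is genuinely needed, since in that case a $2$-sphere of compatible complex structures makes $F$ fail to be injective, consistent with Remark \ref{hrem}.
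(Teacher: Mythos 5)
Your proof follows the same overall skeleton as the paper's: assume $F(t,\rho)=F(t',\rho')=\tilde g$, show that two distinct compatible complex structures would force $\tilde g$ to carry a hyperk\"ahler structure, and then recover first $t$ and then $\rho$. Your implementation of the rigidity step is different from, and arguably cleaner than, the paper's: the paper argues pointwise with quaternions, defining $J_3$ as the normalized commutator $J_1J_2-J_2J_1$ and checking that $J_1,J_2,J_3$ are independent almost complex structures, whereas you pass through the space of $\tilde g$-parallel self-dual $2$-forms, produce a parallel $(2,0)$-form, and invoke the holonomy characterization; both are valid, and your version makes the resulting \emph{global} hyperk\"ahler triple (rather than a pointwise quaternionic relation) transparent. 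Your recovery of $t$ (projecting $\phi(t)=t+\phi(t)^{\perp}$ onto $\H_{-3}(X,\Lambda^{0,1}\otimes\Theta)$) and of $\rho$ (equality of K\"ahler forms, hence of classes, hence $\psi_t(\rho)=\psi_t(\rho')$) is exactly what the paper compresses into the citation of the proof of Theorem \ref{t4.3}.

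There is, however, a genuine gap in your first step, where you propagate non-hyperk\"ahlerity from $g_0$ to every metric in $\mathfrak{F}$. You assert that in complex dimension two a K\"ahler metric is hyperk\"ahler if and only if it is Ricci-flat, and you use the direction ``not hyperk\"ahler $\Rightarrow \operatorname{Ric}\not\equiv 0$.'' That direction is false for the manifolds considered in this paper: Ricci-flatness only forces the \emph{restricted} holonomy into ${\rm{SU}}(2)$, and when $X$ is not simply connected the canonical bundle can be flat but holomorphically non-trivial, so no parallel $(2,0)$-form and no hyperk\"ahler triple exists. The paper itself points to exactly such examples in the remark following Theorem \ref{t1.8}: Suvaina's Ricci-flat K\"ahler ALE surfaces, obtained as free quotients of $A_k$ hyperk\"ahler metrics, which are scalar-flat K\"ahler and \emph{not} hyperk\"ahler. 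If $g_0$ is such a metric, it satisfies the hypothesis of the lemma, yet $\operatorname{Ric}(g_0)\equiv 0$ and your openness argument yields nothing, so your proof never gets off the ground in precisely one of the cases the lemma is meant to cover. The paper avoids this issue by not transferring non-hyperk\"ahlerity through the family via curvature at all: if \emph{any} metric on $X$ with the given ALE end were hyperk\"ahler, its parallel complex symplectic form would be asymptotic to a $\Gamma$-invariant constant form at infinity, forcing $\Gamma\subset{\rm{SU}}(2)$. This is a condition on the end, independent of which metric in the family one considers, so when $\Gamma\not\subset{\rm{SU}}(2)$ (which is how the paper's proof actually phrases the dichotomy, and is the regime in which the lemma is applied, e.g.\ in Theorem \ref{t1.8} and Table \ref{dimtable}) the hyperk\"ahler alternative is excluded for $\tilde g$ directly. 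Your argument would be repaired by replacing the Ricci-curvature step with this observation about the group at infinity; the rest of your proof then goes through as written.
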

\begin{proof}
If $F(\phi_1, \rho_1) = F(\phi_2, \rho_2)  = \tilde{g}$, then 
the metric $\tilde{g}$ is K\"ahler with respect to the two 
complex structures $J_1$ and $J_2$, corresponding to 
$\phi_1$ and $\phi_2$, respectively. 

We have $J_i\in End(TX)$ ($i=1,2$) such that $J_i^2 = -1$. Locally, $J_i$ can be considered as an purely imaginary Hamiltonian number. Define 
\begin{align}
J_3 = \frac{J_1J_2-J_2J_1}{\Vert J_1J_2-J_2J_1\Vert },
\end{align}
where for $p \in X$, 
\begin{align}
\Vert J_i(p)\Vert = \sup_{0\neq v\in T_p X}\Big\{\frac{\Vert J_i(v)\Vert_h}{\Vert v\Vert_h}\Big\}.
\end{align} 
Then $J_3\in End(TX)$. We claim that $J_3$ is well-defined, $J_3^2=-1$, $J_1,J_2,J_3$ are linearly independent.  Clearly, 
\begin{align}
(J_1J_2-J_2J_1)(J_1J_2-J_2J_1) = -2 + (J_1J_2J_1J_2- J_2J_1J_2J_1).
\end{align} 
Since $J_1J_2J_1J_2-J_2J_1J_2J_1$ is real and has norm which is $<2$, 
we have that 
\begin{align}
(J_1J_2-J_2J_1)(J_1J_2-J_2J_1)\neq 0,
\end{align}
so $J_3$ is well-defined, and $J_3\cdot J_3=-1$. If we write $J_1,J_2$ locally as Hamiltonian numbers $a_1 I+b_1 J + c_1 K$, $a_2 I+b_2 J + c_2 K$ (where $a_i,b_i, c_i\in \RR$, $I^2 = J^2 = K^2 =-1$), then 
\begin{align}
J_1J_2-J_2J_1 = (b_0c_1-b_1c_0)I+(c_0a_1-c_1a_0)J+(a_0c_1-a_1c_0)K,
\end{align}
 which is linearly independent with $J_1$ and $J_2$.
Then $J_1,J_2,J_3$ are linearly independent. As a result,
$\{J_1,J_2,J_3\}$ gives a hyperk\"ahler structure. Then we have proved that, 
if $\Gamma\not\subset {\rm{SU}}(2)$, then $J_1 = J_2$, 
then by the proof of Theorem \ref{t4.3}, $F$ is injective.
\end{proof}
The next proposition immediately implies the second part of Theorem \ref{t2}. 
\begin{proposition}
For any two elements $g_1,g_2 \in \mathfrak{F}$ associated to complex structures $J_1,J_2$ such that 
$E_{J_0}^{-1}(J_1),E_{J_0}^{-1}(J_2)$ are divergence-free, 
if there exists a small diffeomorphism $\Phi_Y$ which is induced by the exponential map of a vector field $Y\in C^{k+1,\alpha}_{\delta+1}(TX)$, such that $g_1 = \Phi_Y^*g_2$, then $g_1,g_2$ are the same. 
\end{proposition}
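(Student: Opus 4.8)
The plan is to use the isometry to transport the complex structure $J_2$, then identify it with $J_1$ by a parallel-tensor argument, and finally invoke the uniqueness of the divergence-free gauge from Lemma~\ref{dfree} to force $\Phi_Y$ to be the identity.

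First I would record the Kähler structure picture. Since $g_1 = \Phi_Y^* g_2$, the map $\Phi_Y$ is an isometry from $(X,g_1)$ to $(X,g_2)$, so pulling back the Kähler structure of $(X,g_2,J_2)$ shows that $g_1$ is Kähler with respect to $\tilde J := \Phi_Y^* J_2$. Hence $g_1$ is simultaneously Kähler for $J_1$ and for $\tilde J$. In particular both $J_1$ and $\tilde J$ are parallel with respect to the Levi--Civita connection $\nabla$ of $g_1$: for $J_1$ this is the Kähler condition, and for $\tilde J$ it follows from the naturality $\nabla = \Phi_Y^*\nabla^{g_2}$ together with $\nabla^{g_2}J_2 = 0$.

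Next I would prove that $\tilde J = J_1$. Set $T = J_1 - \tilde J$. Since $J_1$ and $\tilde J$ are both $\nabla$-parallel, $T$ is parallel, so its pointwise norm $|T|_{g_1}$ is a constant function on the connected manifold $X$. On the other hand, writing $J_1 = J_0 + \phi_1$ with $\phi_1$ an essential deformation (so $\phi_1 = O(r^{-3})$) and recalling that $J_0 \to J_{Euc}$ at infinity, one has $J_1 \to J_{Euc}$, and likewise $J_2 \to J_{Euc}$. Because $Y \in C^{k+1,\alpha}_{\delta+1}(TX)$ with $\delta+1 \in (-1,0)$, the vector field $Y$ decays and $\Phi_Y$ is asymptotic to the identity; consequently $\tilde J = \Phi_Y^* J_2 \to J_{Euc}$ as well. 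Therefore $T \to 0$ at infinity, and a parallel tensor of constant norm that decays at infinity must vanish identically. Thus $\Phi_Y^* J_2 = J_1$. Note that this step is uniform and bypasses the hyperkähler/non-hyperkähler dichotomy of Lemma~\ref{non-hyperKahler}, since it uses only that parallel complex structures agreeing asymptotically must coincide.

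Finally I would apply the uniqueness in Lemma~\ref{dfree} to the complex structure $J_2$, which is close to $J_0$ because $g_2 \in \mathfrak{F}$ lies near $g_0$: there is a \emph{unique} small diffeomorphism $\Phi_{Y'}$, $Y' \in C^{k+1,\alpha}_{\delta+1}(TX)$, with $\delta_0(\Phi_{Y'}^* J_2) = 0$. The identity $(Y'=0)$ qualifies since $J_2$ is divergence-free, and $\Phi_Y$ qualifies since $\delta_0(\Phi_Y^* J_2) = \delta_0 J_1 = 0$ because $J_1$ is divergence-free. By uniqueness $Y = 0$, so $\Phi_Y = \mathrm{id}$ and $g_1 = \Phi_Y^* g_2 = g_2$, as desired. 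The main point requiring care is constant-matching: one must take the threshold $\epsilon_4$ defining a ``small diffeomorphism'' small enough (in terms of $\epsilon_3$) that $\Phi_Y$ lies inside the uniqueness ball of Lemma~\ref{dfree} for the input $J_2$; the remaining verifications, namely the asymptotics of $\Phi_Y^* J_2$ and the naturality of the Levi--Civita connection under pullback, are routine.
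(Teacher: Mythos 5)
Your proof is correct, but the key step identifying $J_1$ with $\Phi_Y^*J_2$ is handled by a genuinely different argument than the paper's. The paper splits into two cases: in the non-hyperk\"ahler case it invokes Lemma \ref{non-hyperKahler} (the quaternionic argument showing that a metric K\"ahler with respect to two distinct compatible complex structures is hyperk\"ahler) to conclude $J_1 = \Phi_Y^* J_2$, and in the hyperk\"ahler case it argues separately by rotating the hyperk\"ahler sphere to reduce to the previous situation. Your parallel-tensor argument --- $T = J_1 - \Phi_Y^*J_2$ is $\nabla^{g_1}$-parallel by naturality of the Levi--Civita connection, hence of constant norm, and decays at infinity since both structures are asymptotic to $J_{Euc}$ and $\Phi_Y$ is asymptotic to the identity, so $T \equiv 0$ --- treats both cases uniformly and never needs the dichotomy. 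What this buys: it replaces the paper's rather terse hyperk\"ahler rotation step with a transparent rigidity statement (distinct parallel complex structures have distinct asymptotic limits, so fixing the limit $J_{Euc}$ pins down a unique member of the hyperk\"ahler sphere), and it is self-contained given only the K\"ahler condition and ALE asymptotics. What the paper's route buys: Lemma \ref{non-hyperKahler} is needed anyway for the dimension count of $\mathfrak{M}$, so given that lemma the non-hyperk\"ahler case is immediate. Your final step --- applying the uniqueness in Lemma \ref{dfree} to $J_2$, noting that both the identity and $\Phi_Y$ gauge $J_2$ to be divergence-free, hence $Y=0$ --- is exactly the paper's conclusion, and your attention to the constant-matching (taking $\epsilon_4$ small enough that $\Phi_Y$ lies in the uniqueness ball) is a point the paper leaves implicit.
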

\begin{proof}
First, assume $X$ is non-hyperk\"ahler. Since $\Phi_Y^*g_2 = g_1$, by Lemma \ref{non-hyperKahler}, $J_1=\Phi_Y^*J_2$. However by Lemma \ref{dfree}, there is a unique small diffeomorphism that gauges $E_{J_0}^{-1}(J_2)$ to be divergence-free. Then $\Phi_Y = Id$, $g_1 = g_2$.

When $X$ is hyperk\"ahler, then by rotating the hyperk\"ahler sphere, there exists a $J_1'$ such that $J_1'$ is compatible with $g_1$ and $(X,\Phi_Y^*J_2)$ is biholomorphic to $(X,J_1')$. Then by the same argument above, $g_1=g_2$.
\end{proof}

Since $\mathfrak{F}$ is of finite dimension, and $\mathfrak{G}$ is a compact group action on $\mathfrak{F}$, the dimension~$m$ of $\mathfrak{M} = \mathfrak{F}/\mathfrak{G}$ is well-defined. In the non-hyperk\"ahler case, by Lemma \ref{non-hyperKahler}, 
\begin{align}
m = d- ( \text{the dimension of a maximal orbit of } \mathfrak{G}).
\end{align}
(For the hyperk\"ahler case, recall Remark~\ref{hrem}.) 
\section{Deformations of the minimal resolution}
\label{defminsec}
In this section, we prove Theorems \ref{generalthm} and \ref{t1.8}.
\subsection{Harmonic representation of $H^1(X,\Theta)$}
\label{decayharmonic}
Let $X$ denote the minimal resolution of $\CC^2/\Gamma$, where $\Gamma$ is a finite subgroup of ${\rm{U}}(2)$ without complex reflections. In the following, we want to construct a weighted version of Hodge theory, that links the sheaf cohomology with the decaying harmonic forms. 

The divisor $E = \cup_i E_i$ is a union of irreducible components which are
rational curves, with only normal crossing singularities.
Let $Der_{E}({X})$ denote the sheaf dual to logarithmic $1$-forms along $E$ (see \cite{Kawamata}). 
We note that $Der_{E}({X})$ is a locally free sheaf of rank $2$, see \cite{Wahl1975}.
Away from $E$, this is clear. If $p\in E_i$, we can choose a holomorphic coordinate chart $\{z_1, z_2\}$ such that near $p$, $E_i = \{z_1=0\}$. Then local sections 
of $Der_{E}({X})$ are generated by $\{z_1\frac{\partial}{\partial z_1},\frac{\partial}{\partial z_2}\}$.

The short exact sequence
\begin{align}
0\rightarrow Der_E({X}) \rightarrow \Theta_{{X}} \rightarrow \oo_E(E)\rightarrow 0,
\end{align}
induces an exact sequence of cohomologies
\begin{align}
\label{es}
0\rightarrow H^1(X,Der_E(X))\rightarrow H^1(X,\Theta)\rightarrow H^1(E,\oo_E(E))\rightarrow H^2(X,Der_E(X)).
\end{align}
Since $E$ is composed of rational curves whose self-intersection numbers are negative, we have $H^0(E,\oo_E(E))=0$.
\begin{proposition}
We have the vanishing result: $H^1(X,Der_E(X))=0$.
\end{proposition}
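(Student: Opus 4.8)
The plan is to reduce this global vanishing statement to a relative vanishing statement over the singular base, and then to a degree computation on the exceptional divisor. Write $\pi : X \rightarrow Y$ for the resolution, where $Y = \CC^2/\Gamma$. Since $Y$ is an affine variety, $H^i(Y,\mathcal{G}) = 0$ for every coherent sheaf $\mathcal{G}$ and every $i \geq 1$, so the Leray spectral sequence for $\pi$ applied to $Der_E(X)$ collapses to an isomorphism
\begin{align}
H^1(X, Der_E(X)) \cong H^0\big(Y, R^1\pi_* Der_E(X)\big).
\end{align}
Thus it suffices to prove the relative vanishing $R^1\pi_* Der_E(X) = 0$. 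This sheaf is coherent and supported at the singular point $0 \in Y$, so it vanishes if and only if its formal completion there does.

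To analyze the completion I would invoke the theorem on formal functions. Because $\CC^2/\Gamma$ is a rational singularity, the pullback of the maximal ideal is invertible, $\mathfrak{m}_0 \mathcal{O}_X = \mathcal{O}_X(-Z)$, where $Z = \sum_i r_i E_i$ is the fundamental cycle, which satisfies $Z \cdot E_i \leq 0$ for every component. The formal function theorem then identifies the completion with $\varprojlim_n H^1\big(nZ, Der_E(X)|_{nZ}\big)$, so it is enough to show $H^1$ of the graded pieces $Der_E(X)\otimes \mathcal{O}_X(-kZ)|_{Z}$ vanishes for all $k \geq 0$. Filtering $\mathcal{O}_{Z}$ by a computation sequence $Z_0 = E_{i_0}$, $Z_j = Z_{j-1}+E_{i_j}$ for the fundamental cycle, one reduces by the standard short exact sequences $0 \to \mathcal{O}_{E_{i_j}}(-Z_{j-1}) \to \mathcal{O}_{Z_j} \to \mathcal{O}_{Z_{j-1}} \to 0$ (tensored with $Der_E(X)\otimes\mathcal{O}_X(-kZ)$) to the vanishing of $H^1$ on a single rational curve $E_{i_j}\simeq \PP^1$ of $Der_E(X)$ twisted by a line bundle of degree $-E_{i_j}\cdot Z_{j-1} - k\, E_{i_j}\cdot Z \geq -1$, exactly as in the Artin--Laufer proof that $H^1(\mathcal{O}_Z) = 0$ for rational singularities.

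The technical heart, and the step I expect to be most delicate, is the explicit identification of $Der_E(X)|_{E_i}$. Working in local coordinates in which $E_i = \{z_1 = 0\}$, and using that a logarithmic vector field tangent to $E$ must in addition vanish in the $E_i$-direction at each node, the residue sequence yields
\begin{align}
0 \to T_{E_i}\big(-(\text{nodes on } E_i)\big) \to Der_E(X)|_{E_i} \to \mathcal{O}_{E_i} \to 0,
\end{align}
so that $Der_E(X)|_{E_i}$ is an extension of $\mathcal{O}_{\PP^1}$ by $\mathcal{O}_{\PP^1}(2-m_i)$, where $m_i$ is the number of curves meeting $E_i$. By Brieskorn's description the dual graph is a tree with at most three branches at any vertex, so $m_i \leq 3$; hence $\mathrm{Ext}^1(\mathcal{O}_{\PP^1}, \mathcal{O}_{\PP^1}(2-m_i)) = H^1(\PP^1, \mathcal{O}(2-m_i)) = 0$ and the extension splits as $\mathcal{O}_{\PP^1}(2-m_i)\oplus \mathcal{O}_{\PP^1}$, with both summands of degree $\geq -1$. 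Twisting by the nonnegative line bundles $\mathcal{O}_X(-kZ)|_{E_i}$ and $\mathcal{O}_{E_i}(-Z_{j-1})$ appearing in the computation sequence only raises these degrees, so every $H^1$ term in the induction vanishes, the gluing across nodes being controlled by the Mayer--Vietoris sequence for the tree $E$ together with $H^1(E,\mathcal{O}_E)=0$. The hypotheses enter precisely here: the absence of complex reflections guarantees that $0$ is an isolated quotient singularity with the stated exceptional configuration, while minimality ($E_i^2 \leq -2$) and the tree structure are exactly what force all relevant degrees to stay $\geq -1$. The main obstacle is therefore the bookkeeping of twists so that positivity is never lost along the nonreduced fundamental cycle, rather than any single hard estimate.
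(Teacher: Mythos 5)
Your global strategy (Leray collapse over the affine base, formal functions with $\mathfrak{m}_0\mathcal{O}_X = \mathcal{O}_X(-Z)$, computation sequences, restriction to the exceptional curves) is a legitimate algebro-geometric route, genuinely different from the paper's proof, and it does work in the cyclic case. Your residue sequence is written backwards --- the correct sequence is $0 \to \mathcal{O}_{E_i} \to Der_E(X)|_{E_i} \to T_{E_i}(-\mathrm{nodes}) \to 0$, with the trivial residue factor as the sub rather than the quotient --- but since only the two graded pieces $\mathcal{O}_{E_i}$ and $\mathcal{O}_{\PP^1}(2-m_i)$ enter the $H^1$ estimates, that slip is harmless. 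The genuine gap is in the assertion that the twists appearing in the induction ``only raise these degrees.'' They do not: for a rational singularity, every step of a computation sequence after the first has $E_{i_j}\cdot Z_{j-1} = 1$, so $\mathcal{O}_{E_{i_j}}(-Z_{j-1})$ has degree exactly $-1$. When the curve being added is the trivalent central curve $E_c$ (so $m_c = 3$), the piece $\mathcal{O}_{\PP^1}(2-m_c)\otimes\mathcal{O}_{E_c}(-Z_{j-1}) \cong \mathcal{O}_{\PP^1}(-2)$ has $H^1 = \CC \neq 0$, and the inductive step collapses.

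This failure cannot be ordered away, and it occurs inside the scope of the proposition: whenever the fundamental cycle is non-reduced at the central curve, which includes every $D_n$ and $E_n$ subgroup of ${\rm SU}(2)$ (and some non-${\rm SU}(2)$ non-cyclic groups with $b_\Gamma = 2$). For $D_4$ one has $Z = 2E_0 + E_1 + E_2 + E_3$ and $Der_E(X)|_{E_0} \cong \mathcal{O}\oplus\mathcal{O}(-1)$, so $E_0$ must be added twice; writing $d_c, d_i$ for the running coefficients, your degree conditions require $d_1+d_2+d_3 \leq 2d_c$ to add $E_0$ and $d_c \leq 2d_i+1$ to add $E_i$, and these are incompatible with ever reaching $nZ$: each outer curve must be added before $d_c$ reaches $2$, but then the second addition of $E_0$ needs $d_1+d_2+d_3 \leq 2 < 3$. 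So at these steps one needs an additional argument --- e.g.\ surjectivity of the connecting map $H^0\big(Der_E(X)\otimes\mathcal{O}_{Z_{j-1}}\big) \to H^1\big(Der_E(X)\otimes\mathcal{O}_{E_c}(-Z_{j-1})\big)$ --- which is precisely what your bookkeeping was designed to avoid, and which is where the real content of the non-cyclic case lies. The paper sidesteps this entirely: it proves the cyclic case by an explicit \v{C}ech/Laurent-series computation on the Hirzebruch--Jung charts (where indeed all $m_i \leq 2$ and your argument also works), and handles the non-cyclic case analytically, by gluing Calderbank--Singer ALE spaces to a LeBrun orbifold and matching Fredholm-index/dimension counts against $H^1(E,\oo_E(E))$, rather than by induction on the exceptional cycle.
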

\begin{proof}
This is a result which follows from work of 
\cite{BKR, Brieskorn, Laufer1973, Wahl1975}. 
\end{proof}

By Siu's vanishing theorem (\cite{Siu}), since $X$ is a non-compact $\sigma$-compact complex manifold, for any coherent analytic sheaf $\mathscr{F}$ on $X$, the top degree sheaf cohomology $H^2(X,\mathscr{F})$ is trivial. Consequently, $H^2(X,Der_E(X))=0$, which gives us an isomorphism $H^1(\Theta_{{X}}) = H^1(O_E(E))$. Let $-e_j$ be the self-intersection number of each rational curve $E_j\subset E$, and let $k_\Gamma$ be the number of rational curves (which is equal to $b_2$). Then 
\begin{align}
\dim(H^1(X,\Theta)) = \sum_{j=1}^{k_\Gamma}(e_j-1).
\end{align}

\begin{theorem}
\label{t2.3}
Let $(X,g,J)$ denote the minimal resolution of $\CC^2/\Gamma$ with any ALE K\"ahler metric $g$ of order $\tau > 1$. Then
\begin{align}
H^1(X,\Theta) \cong  \H_{-3}(X,\Lambda^{0,1}\otimes\Theta)
\cong \H_{ess}(X,\Lambda^{0,1}\otimes\Theta)
\end{align}
\end{theorem}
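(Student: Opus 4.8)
The plan is to establish the left-hand isomorphism by weighted Hodge theory together with a comparison with sheaf cohomology, and then to deduce the right-hand isomorphism formally from it. First I would record the Hodge-theoretic consequences of Lemma \ref{l2.5}: the operator $P=(\bar\partial^*,\bar\partial)$ is Fredholm and surjective on $C^{k,\alpha}_{\delta-1}(X,\Lambda^{0,1}\otimes\Theta)$, so its kernel is exactly $\H_{\delta-1}(X,\Lambda^{0,1}\otimes\Theta)$, which by Proposition \ref{dcyprop} (since $\delta-1\in(-3,-2)$) coincides with $\H_{-3}(X,\Lambda^{0,1}\otimes\Theta)$. Surjectivity of $P$ yields a weighted Hodge decomposition of $C^{k,\alpha}_{\delta-1}$ into the harmonic part, a $\bar\partial$-image, and a $\bar\partial^*$-image. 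A decaying $\square$-harmonic form is in particular $\bar\partial$-closed, hence defines a Dolbeault class; since the $(0,q)$-forms valued in $\Theta$ form a fine resolution, Dolbeault's theorem identifies $\bar\partial$-cohomology with $H^1(X,\Theta)$ and produces a natural comparison map $\Psi\colon \H_{-3}(X,\Lambda^{0,1}\otimes\Theta)\to H^1(X,\Theta)$.

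Both injectivity and surjectivity of $\Psi$ come down to comparing the decaying $\bar\partial$-cohomology with sheaf cohomology across the end. For surjectivity I would exploit the localization established above: the vanishing $H^1(X,Der_E(X))=H^2(X,Der_E(X))=0$ gives $H^1(X,\Theta)\cong H^1(E,\oo_E(E))$, so every class is detected on the compact divisor $E$ and admits a Dolbeault representative $\alpha$ supported in a neighborhood of $E$, in particular rapidly decaying. Feeding such an $\alpha$ into the Hodge decomposition and using $\bar\partial\alpha=0$ with integration by parts to eliminate the $\bar\partial^*$-component, I would write $\alpha=\phi_h+\bar\partial\xi$ with $\phi_h\in\H_{-3}(X,\Lambda^{0,1}\otimes\Theta)$ and $\xi$ decaying, whence $\Psi(\phi_h)=[\alpha]$. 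For injectivity, if $\phi\in\H_{-3}(X,\Lambda^{0,1}\otimes\Theta)$ is $\bar\partial$-exact as a smooth form, I would first solve $\bar\partial\eta=\phi$ with $\eta$ decaying on the end by the explicit $\bar\partial_{Euc}$-Poincar\'e lemma used in the proof of Proposition \ref{p2.5}, and then patch to a globally decaying potential using the same localization on $E$; with a decaying potential the boundary term in $\Vert\phi\Vert_{L^2}^2=\langle\bar\partial\eta,\phi\rangle=\langle\eta,\bar\partial^*\phi\rangle$ vanishes, and $\bar\partial^*\phi=0$ forces $\phi=0$.

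Granting that $\Psi$ is an isomorphism, the right-hand isomorphism $\H_{-3}(X,\Lambda^{0,1}\otimes\Theta)\cong\H_{ess}(X,\Lambda^{0,1}\otimes\Theta)$ is equivalent to the statement $\VV=0$. Under the real-to-complex identification of the complexes \eqref{rdefcomplex} and \eqref{defcomplex}, any $\theta=\mathfrak{L}_Y J\in\VV$ with $Y\in\WW$ corresponds to $\bar\partial Y$, which is $\bar\partial$-exact with the (linearly growing) potential $Y$ and therefore represents the zero class in $H^1(X,\Theta)$; injectivity of $\Psi$ then forces $\theta=0$, so $\H_{ess}=\H_{-3}$. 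The main obstacle is precisely the comparison of the decaying harmonic theory with $H^1(X,\Theta)$ at infinity—producing rapidly decaying representatives and decaying potentials—which is where the localization $H^1(X,\Theta)\cong H^1(E,\oo_E(E))$ and the indicial-root analysis behind Proposition \ref{dcyprop} (no harmonic form decays slower than $r^{-3}$) are indispensable; the order hypothesis $\tau>1$ guarantees that the end is close enough to Euclidean for these asymptotic arguments to apply.
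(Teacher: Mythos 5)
Your surjectivity step contains the fatal gap. You infer from $H^1(X,\Theta)\cong H^1(E,\oo_E(E))$ that every class ``admits a Dolbeault representative supported in a neighborhood of $E$, in particular rapidly decaying.'' That inference is invalid, and the conclusion is false in general: if a class has a representative supported in a compact set $K$, then it restricts to zero in $H^1(X\setminus K,\Theta)$, whereas the generators of $H^1(X,\Theta)$ typically restrict nontrivially to every neighborhood of infinity. Concretely, take $\Gamma=\frac{1}{3}(1,1)$, so $X=\oo(-3)$ with charts $(\zeta_0,w),(\zeta_1,v)$, $v=w^{-1}$, $\zeta_1=w^3\zeta_0$. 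Then $H^1(X,\Theta)\cong\CC^2$ is spanned by $[w^{-1}\partial_{\zeta_0}]$ and $[w^{-2}\partial_{\zeta_0}]$, and pulling back to the cover $\CC^2\setminus\{0\}$ of $X\setminus E$ (via $\zeta_0=z_1^3$, $w=z_2/z_1$) these become, modulo coboundaries, $\tfrac13[z_1^{-1}z_2^{-1}\partial_{z_1}]$ and $\tfrac13[z_1^{-1}z_2^{-1}\partial_{z_2}]$ --- precisely nontrivial classes of the all-negative-exponent form \eqref{cce}, which moreover survive restriction to $\{|z|>R\}$ for every $R$ by a Hartogs/Laurent argument. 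So no nonzero class has a compactly supported representative there, and your mechanism for producing decaying representatives evaporates. What is true, and what the theorem needs, is the strictly weaker statement that every class has a \emph{decaying} representative; the paper obtains this exactly from the \v{C}ech--Laurent computation on $\CC^2\setminus\{0\}$: the cohomology of the end is spanned by monomial classes with all exponents negative, whose partition-of-unity Dolbeault representatives decay, and a general class on $X$ differs from such a form by $\bar\partial$ of a potential that one $\Gamma$-averages and cuts off. Decay at infinity and triviality at infinity are different things here, and the sheaf-theoretic localization only sees the latter. There is also a circularity problem with your chosen input: within this paper, the vanishing $H^1(X,Der_E(X))=0$ for non-cyclic $\Gamma$ (Proposition \ref{counting}) is itself deduced from surjectivity of the map \eqref{natmap} that you are trying to prove.

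Two further comments. Your injectivity sketch follows the paper's route, but ``patch to a globally decaying potential'' conceals the substantive steps: after solving on the end with the explicit Poincar\'e kernel one must $\Gamma$-average the solution and extend the corrected potential holomorphically across $E$, which uses Hartogs together with the vanishing of a $\Gamma$-invariant vector field at the origin (this is where ``no complex reflections'' enters) and the lifting argument of the Appendix. On the other hand, your deduction of $\H_{-3}(X,\Lambda^{0,1}\otimes\Theta)\cong\H_{ess}(X,\Lambda^{0,1}\otimes\Theta)$ from injectivity is correct and genuinely slicker than the paper's: an element $\theta=\mathfrak{L}_YJ\in\VV$ corresponds to $\bar\partial Y^{1,0}$, hence is a decaying harmonic form that is $\bar\partial$-exact with a smooth (growing) potential, and since the paper's injectivity argument imposes no growth condition on the potential, injectivity alone forces $\theta=0$; the paper instead expands $Y\in\WW$, lifts its linear part to a global holomorphic vector field on $X$ via the Appendix, and integrates by parts. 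But that shortcut is only available once the first isomorphism is established, so as written the proposal does not prove the theorem.
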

\begin{proof}
First, we consider the case on $\CC^2\setminus\{0\}$, and we compute $H^1(\CC^2\setminus \{0\},\Theta)$. The domain $\CC^2\setminus \{0\}$ can be covered by two charts: $U_1 = \{z_1\neq 0\} $ and $U_2 = \{z_2\neq 0\}$. Note that $U_1,U_2$ are each isomorphic to $\CC\times\CC^*$, and $U_1\cap U_2$ is isomorphic to $\CC^*\times\CC^*$. 
Then $\theta_1\in \Gamma(U_1, \Theta)$ can be expanded into a Laurent series
\begin{align}
\theta_1 = \sum_{i\in\ZZ_{\geq 0},j\in \ZZ}a^1_{i,j}z_1^iz_2^j\frac{\partial}{\partial z_1}+b^1_{i,j}z_1^iz_2^j\frac{\partial}{\partial z_2},
\end{align}
$\theta_2\in \Gamma(U_2, \Theta)$ can be expanded into 
\begin{align}
\theta_2 = \sum_{i\in\ZZ,j\in \ZZ_{\geq 0}}a^2_{i,j}z_1^iz_2^j\frac{\partial}{\partial z_1}+b^2_{i,j}z_1^iz_2^j\frac{\partial}{\partial z_2},
\end{align}
$\theta_{1,2}\in \Gamma(U_1\cap U_2, \Theta)$ can be expanded into 
\begin{align}
\theta_{1,2} = \sum_{i\in\ZZ,j\in \ZZ}a^{1,2}_{i,j}z_1^iz_2^j\frac{\partial}{\partial z_1}+b^{1,2}_{i,j}z_1^iz_2^j\frac{\partial}{\partial z_2}.
\end{align}
Then any $\theta_{1,2}\in \Gamma(U_1\cap U_2, \Theta)$ which cannot be represented as $\theta_1-\theta_2$ must be of the form
\begin{align}
\label{cce}
\theta_{1,2} = \sum_{i\in\ZZ_-,j\in \ZZ_-}a^{1,2}_{i,j}z_1^iz_2^j\frac{\partial}{\partial z_1}+b^{1,2}_{i,j}z_1^iz_2^j\frac{\partial}{\partial z_2}.
\end{align}

By Dolbeault's lemma, we have $H^1(X,\Theta)\simeq H^0(X,\Lambda^{0,1}\otimes\Theta)$ (\cite{GH}). Using this, we transform the \v{C}ech cohomology element \eqref{cce} 
to an element of Dolbeault cohomology $\phi\in \Gamma(\CC^2\setminus \{0\}, \Lambda^{0,1}\otimes\Theta)$.
Let $\rho_1,\rho_2$ be a partition of unity, where $\rho_1$ is supported in $U_1$ and $\rho_1 =1$ for $|z_1|>1$; $\rho_2$ is supported in $U_2$ and $\rho_2 = 1$ for $|z_2|>1$; $\rho_1+\rho_2=1$. Then define 
 $\phi\in \Gamma(\CC^2\setminus \{0\}, \Lambda^{0,1}\otimes\Theta)$ as
\begin{align}
\phi = \begin{cases}\bar\partial(\rho_2\cdot\theta_{1,2}) &\text{ on } U_1 \\ -\bar\partial(\rho_1\cdot\theta_{1,2}) &\text{ on } U_2\end{cases}.
\end{align}
The form $\phi$ is well-defined since $\bar\partial(\rho_2\cdot\theta_{1,2})+\bar\partial(\rho_1\cdot\theta_{1,2}) = \bar\partial(\theta_{1,2}) = 0$ on $U_1\cap U_2$. Furthermore, $\phi$ is decaying at infinity, since the degrees 
appearing in \eqref{cce} are all negative.

Now for any closed form $\phi\in H^0(X,\Lambda^{0,1}\otimes\Theta)$, since $X-E$ is biholomorphic to $\CC^2\setminus \{0\}/\Gamma$, $\phi$ corresponds with a $\Gamma$-equivariant form $\phi'\in H^0(\CC^2\setminus\{0\},\Lambda^{0,1}\otimes\Theta)$. Then $\phi' = \bar\partial\tau+\psi$, with $\psi$ a closed form and decaying at infinity by the argument above. Since $\Gamma$ is a finite group, by averaging $\tau$ by the group action, we can assume that $\tau$ is $\Gamma$-invariant. Let $\rho$ be a cutoff function on $X$ which equals to $0$ in $B(R)$ and $1$ outside of $B(2R)$. Then $\phi-\bar\partial(\rho\cdot\tau)$ is a closed decaying form, which is in the same class as $\phi$. 

This shows that the natural mapping 
\begin{align}
\label{natmap}
\H_{-3}(X,\Lambda^{0,1}\otimes\Theta) \rightarrow H^1(X,\Theta)
\end{align}
is surjective. We next show that this mapping is injective.

Let $\phi = \bar\partial \eta\in \H_{-3}(X,\Lambda^{0,1}\otimes\Theta)$, where $\eta\in \Gamma(X,\Theta)$. Then $[\phi] = 0\in H^1(X,\Theta)$. We want to prove $\phi = 0$ by showing that $\phi = \bar\partial\xi$ for some $\xi = O(r^{-2})$.
Let $\chi$ be a smooth cutoff function defined on $X$ with compact support that contains $E$. Then $\phi = \bar\partial(\chi \eta)+\bar\partial((1-\chi)\eta)$. Since $\pi$ is biholomorphic on $X\setminus E$, $(1-\chi)\eta$ can be pushed down by $\pi$ and extended to ${\zeta}$ on $\CC^2$ such that ${\zeta}=0$ in a neighborhood of the origin.
Also $\bar\partial\zeta = O(r^{-3})$ on $\CC^2$ by the decaying rate of $\phi$.
Exactly as in \eqref{Poin} above, by the Poincar\'e lemma, there exists a ${\sigma}\in C^{\infty}_{-2}(\CC^2,\Theta)$ such that $\bar\partial {\sigma} = \bar\partial{{\zeta}}$. Average ${\sigma}$ by the group action of $\Gamma$ such that ${\sigma}$ is $\Gamma$-invariant, then ${\sigma}(0) = 0$.  Note that $\sigma$ is holomorphic in a neighborhood of $0$.
By standard argument, we can lift up ${\sigma}$ to $\tilde{\sigma} \in \Gamma(X,\Theta)$, where $\tilde{\sigma} = O(|z|^{-2})$. Then we have $\phi = \bar\partial(\chi\eta+\tilde{\sigma})$. Since $\chi\eta+\tilde{\sigma} = O(r^{-2})$, $\phi = 0$.

Finally, we are going to prove that
\begin{align}
 \H_{-3}(X,\Lambda^{0,1}\otimes\Theta)
\simeq \H_{ess}(X,\Lambda^{0,1}\otimes\Theta),
\end{align}
It is clear that $\H_{ess}(X,\Lambda^{0,1}\otimes\Theta)\subset \H_{-3}(X,\Lambda^{0,1}\otimes\Theta)$. To show the isomorphism, we need to show that $\VV = \{0\}$. Let $Y\in \WW$, by definition, $Y\in \H_{1}(X,\Theta)$. Then $Y$ has the following expansion as $r\to\infty$:
\begin{align}
Y = \sum_{i,j} a_{i,j}z_i\frac{\partial}{\partial z_j} + \sum_{k} b_k \frac{\partial}{\partial z_k} +  O(r^{-3 + \epsilon})
\end{align}
where $a_{i,j},b_k$ are constants. The decaying rate of $O(r^{-3 + \epsilon})$ comes from the fact that $\square$ has no indicial roots between $-2$ and $0$, and 
there is no $\bar\partial$-closed kernel element corresponding to the root of $-2$.

If $\Gamma$ is nontrivial, then $b_k=0$. Denote 
\begin{align}
\pi: X\mapsto \CC^2/\Gamma
\end{align}
as the minimal resolution of $\CC^2/\Gamma$. Since $ Z = \sum_{i,j}a_{i,j}z_i\frac{\partial}{\partial z_j}$ is a holomorphic vector field on $\CC^2/\Gamma\setminus\{0\}$ and $\lim_{r\to 0} Z = 0$, by standard argument, $Z$ can be lifted up to a holomorphic vector field $\tilde{Z}$ on $X$. Then $Y-\tilde{Z} \in \H_{-3}(X,\Theta)$. By using integration by parts, $Y-\tilde{Z}$ is holomorphic, then $Y$ is holomorphic, $\bar\partial Y = 0$, and $V=0$. Then $\H_{-3}(X,\Lambda^{0,1}\otimes\Theta)\simeq \H_{ess}(X,\Lambda^{0,1}\otimes\Theta)$.

 Finally, if $\Gamma$ is trivial, then $X$ is biholomorphic to $\CC^2$, and the 
coordinate vector fields extend to global holomorphic vector
fields, so these do not give any non-trivial elements of $\VV$. 
\end{proof}

\subsection{Proof of Theorem \ref{t1.8}}
\begin{proof}[Proof of Theorem \ref{t1.8}]
In the following, $X_t$ will stand for the triple $(X,g_t,J_t)$.
Since the index of a strongly continuous family of Fredholm operators is constant, 
the index $P(t)$ of the operator $P_t$ defined in Lemma \ref{l2.5} is locally constant along a path of ALE K\"ahler metrics. Consequently, $P(t)$ is constant along the path $X_t$. By the same argument as in Lemma \ref{l2.5}, the cokernel of $P_t$ is trivial for all $t \in [0,1]$. Therefore $P_t$ are all surjective Fredholm operators, 
with $\dim(\ker P_t) = \dim(\ker P_0)=\dim(\ker P_1)$. 
Therefore if there exists a path that connects $X_1$ with a minimal resolution $X_0$, then $\dim(\H_{-3}(X_1,\Lambda^{0,1}\otimes\Theta))=\dim(\H_{-3}(X_0,\Lambda^{0,1}\otimes\Theta))$. 
Since $\mathfrak{G}(X_1) = \{ e\}$, 
\begin{align}
\dim_{\RR}(\H_{ess}(X_1,\Lambda^{0,1}\otimes\Theta)) = \dim_{\RR}(\H_{-3}(X_0,\Lambda^{0,1}\otimes\Theta))-\dim(\mathfrak{G}(X_0)) = j_\Gamma.
\end{align}
In the non-hyperk\"ahler case, Lemma \ref{non-hyperKahler} implies that 
\begin{align}
m(X_1) = j_{\Gamma} + b_2(X) = m_\Gamma,
\end{align}
and Theorem \ref{t2} implies that the local moduli space of scalar-flat K\"ahler ALE metrics near $g_1$ is a smooth manifold of dimension $m_\Gamma$. 
As remarked above, 
the hyperk\"ahler moduli space is constructed globally by Kronheimer, and $m_{\Gamma} 
= 3k- 3$.
 \end{proof}

\bibliography{Han_Viaclovsky}

\end{document}